\pgfplotsset{compat=1.10}
\numberwithin{equation}{section}
\newcommand{\extp}{\@ifnextchar^\@extp{\@extp^{\,}}}
\def\extp^#1{\mathop{\bigwedge\nolimits^{\!#1}}}
\theoremstyle{plain}
\newtheorem{theorem}{Theorem}[section]
\newtheorem{proposition}{Proposition}[section]
\newtheorem{corollary}{Corollary}[section]
\newtheorem{lemma}{Lemma}[section]
\newcolumntype{L}{>{$}l<{$}} 
\newcolumntype{C}{>{$}c<{$}} 
\newcommand{\tree}[1]{%
  \def\mylist{#1}%
  \newcount\countitems \countitems=0
  \@for\x:=\mylist\do{\advance\countitems by 1\relax}%
  \ifnum\countitems=3
  \def\first{}\def\second{}\def\third{}%
  \@for\x:=#1\do{%
    \ifx\first\empty
      \edef\first{\x}%
    \else\ifx\second\empty
      \edef\second{\x}%
    \else
      \edef\third{\x}%
    \fi\fi
  }%
  %
  \begin{tikzpicture}[baseline=-3pt, scale=.6]
    \draw[thick] (0,0) -- (90:0.3cm);
    \draw[thick] (0,0) -- (-50:0.6cm);
    \draw[thick] (0,0) -- (230:0.6cm);

    \node[above] at (-0.45,-1.3) {$\first$};
    \node[above] at (0,0.25) {$\second$};
    \node[above] at (0.45,-1.3) {$\third$};
  \end{tikzpicture}%
  \fi
\ifnum\countitems=4
  \def\first{}\def\second{}\def\third{}\def\fourth{}%
  \@for\x:=#1\do{%
    \ifx\first\empty
      \edef\first{\x}%
    \else\ifx\second\empty
      \edef\second{\x}%
    \else\ifx\third\empty
      \edef\third{\x}%
    \else
      \edef\fourth{\x}%
    \fi\fi\fi
  }%
\begin{tikzpicture}[baseline=-3pt, scale=.6]
\draw[thick] (-1,0) -- (0.5,0);
\draw[thick] (-1,0.3) -- (-1, -0.3);
\draw[thick] (0.5,0.3) -- (0.5, -0.3);

\node[above] at (-1,-1.2) {$\first$};
\node[above] at (-1,0.3) {$\second$};
\node[above] at (0.5,0.3) {$\third$};
\node[above] at (0.5,-1.2) {$\fourth$};
\end{tikzpicture}
\fi
 \ifnum\countitems=5
    \def\first{}\def\second{}\def\third{}\def\fourth{}\def\fifth{}%
    \@for\x:=#1\do{%
      \ifx\first\empty \edef\first{\x}%
      \else\ifx\second\empty \edef\second{\x}%
      \else\ifx\third\empty \edef\third{\x}%
      \else\ifx\fourth\empty \edef\fourth{\x}%
      \else \edef\fifth{\x}%
      \fi\fi\fi\fi
    }%
\begin{tikzpicture}[baseline=-3pt, scale=.6]
\draw[thick] (-1,0) -- (1,0);
\draw[thick] (-1,0.3) -- (-1, -0.3);
\draw[thick] (1,0.3) -- (1, -0.3);
\draw[thick] (0,0) -- (0, 0.3);

\node[above] at (-1,-1.2) {$\first$};
\node[above] at (-1,0.3) {$\second$};
\node[above] at (0,0.3) {$\third$};
\node[above] at (1,0.3) {$\fourth$};
\node[above] at (1,-1.2) {$\fifth$};
\end{tikzpicture}
 \fi
}
\begin{document}

\title[A Parametrization of $\mathbb{Z}$-homology $3$-spheres by the 4th Johnson subgroup]{A Parametrization of integral homology $3$-spheres by the fourth Johnson subgroup}


\author{Ricard Riba}
\address{Universitat de Girona, Departament d'Informàtica, Matemàtica aplicada i estadística, Girona, Spain}
\email{ricard.riba@udg.edu}
\thanks{This work was partially supported by the grant PID2024-157757NB-I00 funded by MICIU/AEI/ 10.13039/501100011033 and by ERDF/EU}

\subjclass[2010]{57M27, 20J05}

\keywords{Johnson subgroup, homology spheres, Heegaard splittings, Handlebody group.}

\date{\today}


\begin{abstract}
By results of Morita, Pitsch and, more recently, Faes, it is known that any integral homology 3-sphere can be constructed as a Heegaard splitting with a gluing map an element of the fourth Johnson subgroup.
In this work we prove that the equivalence relation on the fourth Johnson subgroup induced by this construction admits an intrinsic description in terms of the fourth Johnson handlebody subgroups.
In addition,
we give an ``antisymmetic'' Lagrangian trace map inspired in the Lagrangian trace map introduced by Faes and compute the image of the third Johnson handlebody subgroups by the third Johnson homomorphism.
\end{abstract}

\maketitle

\section{Introduction}

Let $\Sigma_{g,1}$ be an oriented surface of genus $g$ with a marked $2$-disk and $\mathcal{M}_{g,1}$ its mapping class group relative to the marked disk.
By elementary Morse theory, any oriented $3$-manifold can be obtained by cutting the oriented 3-sphere $\mathbb{S}^3$ along a standardly embedded copy of $\Sigma_{g,1}$ into two handlebodies $\mathcal{H}_g$, $-\mathcal{H}_g$ and gluing them back by an element of the mapping class group $\mathcal{M}_{g,1}$,
that is, as a Heegaard splitting $\mathcal{H}_g\cup_\varphi -\mathcal{H}_g$ with $\varphi \in \mathcal{M}_{g,1}$.
We call \textit{Heegaard map} the map that sends an element $\varphi\in\mathcal{M}_{g,1}$
to the diffeomorphism class of the corresponding Heegaard splitting $\mathcal{H}_g\cup_\varphi -\mathcal{H}_g$.
The lack of the injectivity in this construction is controlled by the inclusions
$\mathcal{M}_{g,1}\hookrightarrow \mathcal{M}_{g+1,1}$, induced by the embeddings
$\Sigma_{g,1}\hookrightarrow \Sigma_{g+1,1}$, and the subgroups
$\mathcal{A}_{g,1}$ and $\mathcal{B}_{g,1}$, formed by the elements of $\mathcal{M}_{g,1}$ that extend respectively to the inner handlebody $\mathcal{H}_g$ and the outer handlebody $-\mathcal{H}_g$, (see Singer \cite[Thm. 12]{singer}). Moreover, by a result of Waldhausen \cite{wald},
the intersection $\mathcal{A}_{g,1}\cap\mathcal{B}_{g,1}$ coincides with the subgroup
$\mathcal{AB}_{g,1}$, formed by the elements of the mapping class group $\mathcal{M}_{g,1}$ that extend to the whole $3$-sphere $\mathbb{S}^3$.

By results of Morita \cite[Thm. 2.2]{mor}, the restriction of the Heegaard map to the Torelli group $\mathcal{T}_{g,1}$, which is the group formed by the elements of $\mathcal{M}_{g,1}$ that act trivially on the first homology group of $\Sigma_{g,1}$,
assembles the whole set of integral homology 3-spheres $\mathcal{S}_{\mathbb{Z}}^3$.

In this work we consider two filtrations of the Torelli group $\mathcal{T}_{g,1}$:
The \textit{Johnson filtration}, which is given by the subgroups $\mathcal{M}_{g,1}(k)$ formed by the elements of $\mathcal{M}_{g,1}$ that act trivially on $\pi_1(\Sigma_{g,1})/\Gamma_k(\pi_1(\Sigma_{g,1}))$, where $\Gamma_k(\pi_1(\Sigma_{g,1}))$
denotes the $k$-th commutator of $\pi_1(\Sigma_{g,1})$.
In particular, $\mathcal{M}_{g,1}(1)=\mathcal{T}_{g,1}$ and by results of Johnson \cite[Thm. 5]{jon_2}, the subgroup $\mathcal{M}_{g,1}(2)$ is the group generated by the Dehn twists along bounding simple closed curves on $\Sigma_{g,1}$.
The \textit{Torelli filtration}, which is given by the lower central series of the Torelli group $\mathcal{T}_{g,1}$, that is, it is formed by the subgroups $\mathcal{T}_{g,1}(k)$ defined inductively by $\mathcal{T}_{g,1}(1)=\mathcal{T}_{g,1}$ and $\mathcal{T}_{g,1}(k+1)=[\mathcal{T}_{g,1},\mathcal{T}_{g,1}(k)]$.

\pagebreak

These two filtrations induce, by Heegaard map, two filtrations on the set $\mathcal{S}_{\mathbb{Z}}^3$. The filtration of $\mathcal{S}_{\mathbb{Z}}^3$ induced by the Torelli filtration is related with the theory of finite type invariants introduced by Ohtsuki in \cite{ohtsuki1}. In particular, by results of Garufalidis and Levine \cite{levine3}, it is known that any finite type invariant of order $k$ vanishes on the subset of $\mathcal{S}_{\mathbb{Z}}^3$ constructed from $\mathcal{T}_{g,1}(2k+1)$. For instance, the Casson invariant, which is a finite type $1$ invariant, vanishes on $\mathcal{T}_{g,1}(3)$.
The filtration of $\mathcal{S}_{\mathbb{Z}}^3$ induced by the Johnson filtration instead, has been less studied.
In particular, based on the results of  Morita \cite[Prop. 2.3]{mor} for $k=2$, Pitsch \cite[Main Thm.]{pitsch3} for $k=3$ and Faes \cite[Thm. B]{faes2} for $k=4$, we know that there is a bijective map:
\begin{equation}
\label{eq:bij-k-int}
\lim_{g\to \infty}\mathcal{A}_{g,1}\backslash\mathcal{M}_{g,1}(k)/\mathcal{B}_{g,1}  \longrightarrow \mathcal{S}_\mathbb{Z}^3.
\end{equation}

But in \cite[Thm. 1.4]{PR} we showed that this map is no longer surjective for $k=5$. This is due to the existence of a finite type $2$ invariant that vanishes on $\mathcal{M}_{g,1}(5)$, which takes the form $\lambda_2-18\lambda^2+3\lambda$, where $\lambda$ denotes the Casson invariant and $\lambda_2$ the second Ohtsuki invariant.

In fact, by results of Pitsch \cite[Thm. 1]{pitsch} for $k=1$ and Faes \cite[Prop 6.7]{faes1} for $k=2,3$,
if we denote 
$ \mathcal{A}_{g,1}(k)=\mathcal{M}_{g,1}(k)\cap \mathcal{A}_{g,1}$ and
$\mathcal{B}_{g,1}(k)=\mathcal{M}_{g,1}(k)\cap \mathcal{B}_{g,1}$,
the equivalence relation on $\mathcal{M}_{g,1}(k)$ given in \eqref{eq:bij-k-int} for $k=1,2,3$ can be reformulated as follows:
\begin{equation}
\label{eq:ref-equiv-int}
\displaystyle{\lim_{g \to \infty}} (\mathcal{A}_{g,1}(k) \setminus \mathcal{M}_{g,1}(k)/\mathcal{B}_{g,1}(k))_{\mathcal{AB}_{g,1}} \longrightarrow \mathcal{S}_\mathbb{Z}^3.
\end{equation}
More precisely, two maps $\phi,\psi \in \mathcal{M}_{g,1}(k)$ are equivalent if and only if there exist maps $\xi_a \in \mathcal{A}_{g,1}(k)$, $\xi_b \in \mathcal{B}_{g,1}(k)$ and $\mu \in \mathcal{AB}_{g,1}$ such that
$ \phi = \mu \xi_a \psi \xi_b \mu^{-1}$.
Using this result, for $k=1,2,3$, given any normalized invariant of homology spheres $F:\mathcal{S}_\mathbb{Z}^3\rightarrow \mathbb{Z}$, by precomposing this invariant with \eqref{eq:ref-equiv-int} we get a family of functions $(F_g)_g$
that satisfies some properties (cf. \cite[Sec. 3]{PR}).
Moreover, the defect of these maps to be homomorphisms induces a family of $2$-cocycles:
 \begin{align*}
 	C_g: \mathcal{M}_{g,1}(k)\times \mathcal{M}_{g,1}(k) & \longrightarrow \mathbb{Z}, \\
 	(\phi,\psi) & \longmapsto F_g(\phi)+F_g(\psi)-F_g(\phi\psi)
 \end{align*}
with inherited properties from $F_g$.

This framework, as it was shown in \cite{pitsch} and \cite{PR}, allow us to study invariants of integral homology $3$-spheres from an algebraic point of view and to provide ``surgery formulas'' for these invariants, which are given by their associated 2-cocycles.
These surgery formulas turns out to become simpler when restricted to deeper levels of the Johnson filtration.
For instance, by \cite{pitsch}, the $2$-cocycle of the Casson invariant is a pullback of a bilinear form on $\Lambda^3 H_1(\Sigma_{g,1},\mathbb{Z})$ by the first Johnson homomorphism, but when restricted to $\mathcal{M}_{g,1}(2)$ such $2$-cocycle is zero because the Casson invariant is a homomorphism on $\mathcal{M}_{g,1}(2)$.
Therefore, it is of our interest to know whether the bijection \eqref{eq:ref-equiv-int} holds true for the fourth Johnson subgroup, which is the last level of the Johnson filtration for which the Heegaard map \eqref{eq:bij-k-int} still assemble the whole set of integral homology $3$-spheres.

The objective of this work is to show that the reformulation \eqref{eq:ref-equiv-int} also holds for $k=4$. To prove this result we first compute the image of the third Johnson homomorphism $\tau_3$, on the third Johnson subgroup $\mathcal{M}_{g,1}(3)$ and its handlebody subgroups
$\mathcal{A}_{g,1}(3)$, $\mathcal{B}_{g,1}(3)$ and $\mathcal{AB}_{g,1}(3)$.
By results of Faes \cite[Cor. 4.6]{FMS}, we know that
$ \tau_3(\mathcal{M}_{g,1}(3))=\tau_3(\mathcal{T}_{g,1}(3))$.
For the handlebody subgroups, if we denote $\mathcal{TA}_{g,1}(k)$, $\mathcal{TB}_{g,1}(k)$ and $\mathcal{TAB}_{g,1}(k)$ the $k$-th term of the lower central series of $\mathcal{TA}_{g,1}$, $\mathcal{TB}_{g,1}$ and $\mathcal{TAB}_{g,1}$ respectively, there are inclusions:
\[
\begin{array}{c}
\tau_3(\mathcal{A}_{g,1}(3))\supset\tau_3(\mathcal{TA}_{g,1}(3)), \qquad
\tau_3(\mathcal{B}_{g,1}(3))\supset\tau_3(\mathcal{TB}_{g,1}(3)), \\[1ex]
\tau_3(\mathcal{AB}_{g,1}(3))\supset\tau_3(\mathcal{TAB}_{g,1}(3)).
\end{array}
\]
We show that in fact, these inclusions are equalities. For this we construct two ``antisymmetric'' Lagrangian trace maps
$ Tr^A_\Lambda$ and $Tr^B_\Lambda$ inspired in the Lagrangian trace maps introduced by Faes in \cite[Sec. 4]{faes1}, and show that they vanish on $\tau_3(\mathcal{A}_{g,1}(3))$ and $\tau_3(\mathcal{B}_{g,1}(3))$ respectively.
Then we prove that the modules $Ker(Tr^A_\Lambda)\cap Im(\tau_3)$ and $Ker(Tr^B_\Lambda)\cap Im(\tau_3)$ are respectively contained in the modules
$\tau_3(\mathcal{TA}_{g,1}(3))$ and $\tau_3(\mathcal{TB}_{g,1}(3))$.
Therefore we get the following result:

\begin{proposition} \label{prop:intro-3}
For any given integer $g\geq 6$, we have the following equalities:
\[
\begin{aligned}
\tau_3(\mathcal{A}_{g,1}(3)) & =\tau_3(\mathcal{TA}_{g,1}(3))=Ker(Tr^A_\Lambda)\cap Im(\tau_3), \\
\tau_3(\mathcal{B}_{g,1}(3)) & =\tau_3(\mathcal{TB}_{g,1}(3))=Ker(Tr^B_\Lambda)\cap Im(\tau_3), \\
\tau_3(\mathcal{AB}_{g,1}(3))& =\tau_3(\mathcal{TAB}_{g,1}(3))=Ker(Tr^A_\Lambda)\cap Ker(Tr^B_\Lambda)\cap Im(\tau_3).
\end{aligned}
\]
\end{proposition}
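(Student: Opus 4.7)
The plan is to establish, for each of the three equalities, a cyclic chain of inclusions
\[
\tau_3(\mathcal{TA}_{g,1}(3)) \;\subset\; \tau_3(\mathcal{A}_{g,1}(3)) \;\subset\; Ker(Tr^A_\Lambda)\cap Im(\tau_3) \;\subset\; \tau_3(\mathcal{TA}_{g,1}(3)),
\]
and analogously for $\mathcal{B}$; the $\mathcal{AB}$-case will then follow by combining the two.

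The first inclusion is essentially formal: since $\mathcal{TA}_{g,1}\subset \mathcal{T}_{g,1}=\mathcal{M}_{g,1}(1)$ and the Johnson filtration is a central filtration (so $[\mathcal{M}_{g,1}(k),\mathcal{M}_{g,1}(\ell)]\subset \mathcal{M}_{g,1}(k+\ell)$), an inductive argument on the lower central series gives $\mathcal{TA}_{g,1}(3)\subset \mathcal{A}_{g,1}(3)$, and the same for $\mathcal{B}$ and $\mathcal{AB}$. The second inclusion is where the antisymmetric Lagrangian traces enter: I would check directly from the definitions that $Tr^A_\Lambda$, built out of Faes's construction, vanishes on $\tau_3(\mathcal{A}_{g,1}(3))$, using the fact that every $\varphi\in\mathcal{A}_{g,1}$ preserves the Lagrangian $\Lambda_A=Ker(H_1(\Sigma_{g,1};\mathbb{Z})\to H_1(\mathcal{H}_g;\mathbb{Z}))$, so that the pairings contributing to $Tr^A_\Lambda$ appear in symmetric pairs which cancel after antisymmetrisation. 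Symmetrically for $Tr^B_\Lambda$ and the dual Lagrangian $\Lambda_B$.

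The main obstacle is the reverse inclusion $Ker(Tr^A_\Lambda)\cap Im(\tau_3)\subset \tau_3(\mathcal{TA}_{g,1}(3))$. My plan is to fix a symplectic basis $\{a_1,\dots,a_g,b_1,\dots,b_g\}$ of $H=H_1(\Sigma_{g,1};\mathbb{Z})$ adapted to $\Lambda_A=\langle a_1,\dots,a_g\rangle$, and work inside the Morita–Levine description of $Im(\tau_3)$ as a distinguished submodule of $H\otimes \mathcal{L}_4(H)$ (cut out by the relevant trace and Jacobi relations). I would then produce an explicit spanning set of $\tau_3(\mathcal{TA}_{g,1}(3))$ coming from iterated commutators of elements of $\mathcal{TA}_{g,1}$ whose first Johnson images are known from Morita, and show that, in the chosen basis, the vanishing of $Tr^A_\Lambda$ kills precisely the coefficients of the tree-type generators lying outside the span of these handlebody elements. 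The hypothesis $g\geq 6$ should be needed here to guarantee enough independent basis vectors to realise every candidate generator and to separate the trace summands; this module-theoretic bookkeeping is the technical heart of the argument.

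For the $\mathcal{AB}$-part, combining the two previous conclusions yields
\[
Ker(Tr^A_\Lambda)\cap Ker(Tr^B_\Lambda)\cap Im(\tau_3)\;\subset\;\tau_3(\mathcal{TA}_{g,1}(3))\cap \tau_3(\mathcal{TB}_{g,1}(3)),
\]
so it remains to identify this intersection with $\tau_3(\mathcal{TAB}_{g,1}(3))$. I expect this to drop out of the explicit basis description obtained in the previous step: the submodules corresponding to $\mathcal{TA}_{g,1}(3)$ and $\mathcal{TB}_{g,1}(3)$ inside $Im(\tau_3)$ are exchanged by the symplectic duality swapping the $a_i$ and $b_i$, so their intersection can be exhibited as the span of the generators coming from $\mathcal{TAB}_{g,1}$, closing the circle.
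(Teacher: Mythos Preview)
Your overall architecture---establishing the cyclic chain of inclusions and then intersecting for the $\mathcal{AB}$-case---matches the paper's. The third inclusion $Ker(Tr^A_\Lambda)\cap Im(\tau_3)\subset \tau_3(\mathcal{TA}_{g,1}(3))$ is indeed where the bulk of the work lies; the paper handles it via an extensive color-graded decomposition of $Im(\tau_3)$ (its Section~3, culminating in Table~\ref{table:1}) together with an explicit computation of $Ker(Tr^A_\Lambda)$ on that decomposition (Lemma~\ref{lema:ses-Trace-map-A}), which is in the spirit of what you sketch.

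However, there is a genuine gap in your plan for the second inclusion $\tau_3(\mathcal{A}_{g,1}(3))\subset Ker(Tr^A_\Lambda)$. You propose to check this ``directly from the definitions'' using Lagrangian preservation, with the heuristic that contributions ``appear in symmetric pairs which cancel after antisymmetrisation''. This does not work as stated: $Tr^A_\Lambda$ is \emph{not} Faes's trace $Tr^A_3$, and Faes's vanishing theorem does not apply to it. The paper in fact observes that Faes's $Tr^A_3$ vanishes on all of $\overline{W}_3(a^{\geq 1}b)$, which strictly contains $\tau_3(\mathcal{A}_{g,1}(3))$---so that map is too coarse to cut out the handlebody image. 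The antisymmetric version $Tr^A_\Lambda$ is finer (it has image $2\Lambda^3 B$ on $\overline{W}_3(a^{\geq 1}b)$, not zero), and your cancellation heuristic would, if valid, force it to vanish identically on that module, which it does not.

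The paper's actual argument for this inclusion (Lemma~\ref{lema:Trace-map-A}) is of a completely different nature and is the main new idea: it brings in the finite type invariant $F=\lambda_2-18\lambda^2+3\lambda$ of integral homology spheres. One first shows (Lemma~\ref{lemma:W(ab^4)W(a^3)-GL-invairants}) that $Hom(Im(\tau_3)\otimes\Lambda^3 A,\mathbb{Z})^{GL_g(\mathbb{Z})}$ is generated by the single form $\langle Tr^A_\Lambda\circ\pi,\,id\rangle$. The invariant $F$, restricted to $\mathcal{M}_{g,1}(4)$, induces via the bracket $[\,\cdot\,,\,\cdot\,]:\mathcal{M}_{g,1}(3)\times\mathcal{TA}_{g,1}\to\mathcal{M}_{g,1}(4)$ another such $GL_g(\mathbb{Z})$-invariant form, hence a scalar multiple of the first (the constant is computed to be $12$). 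Since $F$ is an invariant of homology spheres it vanishes on $\mathcal{A}_{g,1}(4)\supset[\mathcal{A}_{g,1}(3),\mathcal{TA}_{g,1}]$, and the perfect pairing $\Lambda^3 B\otimes\Lambda^3 A\to\mathbb{Z}$ then forces $Tr^A_\Lambda(\tau_3(\mathcal{A}_{g,1}(3)))=0$. This topological input is essential and is missing from your proposal.
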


From this result we get a ``handlebody version'' of \cite[Cor. 4.6]{FMS},

\begin{corollary}
\label{cor:intro-1}
For any given integer $g\geq 6$, we have the following equalities:
\[
\begin{array}{c}
\mathcal{A}_{g,1}(3)=\mathcal{TA}_{g,1}(3)\cdot \mathcal{A}_{g,1}(4), \qquad
\mathcal{B}_{g,1}(3)=\mathcal{TB}_{g,1}(3)\cdot \mathcal{B}_{g,1}(4), \\[1ex]
\mathcal{AB}_{g,1}(3)=\mathcal{TAB}_{g,1}(3)\cdot \mathcal{AB}_{g,1}(4).
\end{array}
\]
\end{corollary}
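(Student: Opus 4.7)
The corollary is a direct algebraic consequence of Proposition \ref{prop:intro-3}. The plan is to exploit the fact that $\mathcal{M}_{g,1}(4)$ is exactly the kernel of $\tau_3$ restricted to $\mathcal{M}_{g,1}(3)$, so that an equality of $\tau_3$-images on a subgroup of $\mathcal{M}_{g,1}(3)$ lifts to an equality of subgroups modulo $\mathcal{M}_{g,1}(4)$.

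Concretely, for the handlebody case $\mathcal{A}_{g,1}(3)$ I would proceed as follows. First observe that $\mathcal{M}_{g,1}(4)\cap\mathcal{A}_{g,1}(3)=\mathcal{M}_{g,1}(4)\cap\mathcal{A}_{g,1}=\mathcal{A}_{g,1}(4)$, since $\mathcal{M}_{g,1}(4)\subset\mathcal{M}_{g,1}(3)$. Next I would check the inclusion $\mathcal{TA}_{g,1}(3)\subset\mathcal{A}_{g,1}(3)$: this is clear because $\mathcal{TA}_{g,1}(3)\subset\mathcal{A}_{g,1}$ by construction, while $\mathcal{TA}_{g,1}(3)\subset\mathcal{T}_{g,1}(3)\subset\mathcal{M}_{g,1}(3)$ follows from the standard compatibility $[\mathcal{M}_{g,1}(i),\mathcal{M}_{g,1}(j)]\subset\mathcal{M}_{g,1}(i+j)$ of the Johnson filtration, applied inductively to the lower central series starting from $\mathcal{TA}_{g,1}\subset\mathcal{T}_{g,1}=\mathcal{M}_{g,1}(1)$. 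Given any $\phi\in\mathcal{A}_{g,1}(3)$, Proposition \ref{prop:intro-3} supplies some $\psi\in\mathcal{TA}_{g,1}(3)$ with $\tau_3(\psi)=\tau_3(\phi)$. Then $\psi^{-1}\phi\in\mathcal{A}_{g,1}(3)\cap Ker(\tau_3)=\mathcal{A}_{g,1}(4)$, whence $\phi=\psi\cdot(\psi^{-1}\phi)\in\mathcal{TA}_{g,1}(3)\cdot\mathcal{A}_{g,1}(4)$. The reverse inclusion is immediate from $\mathcal{TA}_{g,1}(3)\subset\mathcal{A}_{g,1}(3)$ and $\mathcal{A}_{g,1}(4)\subset\mathcal{A}_{g,1}(3)$.

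The arguments for $\mathcal{B}_{g,1}(3)$ and $\mathcal{AB}_{g,1}(3)$ are formally identical, substituting the corresponding handlebody subgroup and the appropriate equality from Proposition \ref{prop:intro-3}. There is essentially no hard step left at this stage: the entire geometric and algebraic content of the corollary is absorbed by the proposition, and in particular all the technical work lies in identifying $Ker(Tr^A_\Lambda)\cap Im(\tau_3)$ with $\tau_3(\mathcal{TA}_{g,1}(3))$ and its $B$- and $AB$-analogues, together with the assumption $g\geq 6$ which is inherited from that identification.
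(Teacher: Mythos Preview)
Your argument is correct and is exactly the standard deduction the paper has in mind: in the paper this corollary is stated (as Corollary~\ref{cor:Handlebody A(3)}) immediately after Proposition~\ref{prop:Handlebody A(3)} with no further proof, so you have simply spelled out the routine step of lifting the equality of $\tau_3$-images through $Ker(\tau_3|_{\mathcal{M}_{g,1}(3)})=\mathcal{M}_{g,1}(4)$.
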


Moreover, as a direct consequence of Proposition \ref{prop:intro-3}, we also compute the intersection of the images of $\mathcal{A}_{g,1}(3)$ and $\mathcal{B}_{g,1}(3)$ by the third Johnson homomorphism.

\begin{corollary}
\label{cor:intro-2}
For any given integer $g\geq 6$, we have the following equality:
\[
\tau_3(\mathcal{A}_{g,1}(3))\cap \tau_3(\mathcal{B}_{g,1}(3))=\tau_3(\mathcal{AB}_{g,1}(3)).
\]
\end{corollary}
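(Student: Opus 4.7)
The plan is to derive the equality as a formal consequence of the three identities stated in Proposition \ref{prop:intro-3}, since the proposition packages the three images $\tau_3(\mathcal{A}_{g,1}(3))$, $\tau_3(\mathcal{B}_{g,1}(3))$ and $\tau_3(\mathcal{AB}_{g,1}(3))$ as explicit intersections inside $Im(\tau_3)$ with the kernels of the two antisymmetric Lagrangian trace maps $Tr^A_\Lambda$ and $Tr^B_\Lambda$.

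The inclusion $\tau_3(\mathcal{AB}_{g,1}(3)) \subseteq \tau_3(\mathcal{A}_{g,1}(3))\cap \tau_3(\mathcal{B}_{g,1}(3))$ is immediate from the containments $\mathcal{AB}_{g,1}(3)\subseteq \mathcal{A}_{g,1}(3)$ and $\mathcal{AB}_{g,1}(3)\subseteq \mathcal{B}_{g,1}(3)$, and so requires no further argument. For the reverse inclusion, I substitute the characterizations from Proposition \ref{prop:intro-3} and compute
\[
\tau_3(\mathcal{A}_{g,1}(3))\cap \tau_3(\mathcal{B}_{g,1}(3)) = \bigl(Ker(Tr^A_\Lambda)\cap Im(\tau_3)\bigr)\cap \bigl(Ker(Tr^B_\Lambda)\cap Im(\tau_3)\bigr),
\]
which collapses to $Ker(Tr^A_\Lambda)\cap Ker(Tr^B_\Lambda)\cap Im(\tau_3)$, and this is exactly $\tau_3(\mathcal{AB}_{g,1}(3))$ by the third identity of the proposition.

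There is no genuine obstacle here once Proposition \ref{prop:intro-3} is in hand; all of the substantive content, namely the vanishing of $Tr^A_\Lambda$ on $\tau_3(\mathcal{A}_{g,1}(3))$ and of $Tr^B_\Lambda$ on $\tau_3(\mathcal{B}_{g,1}(3))$, together with the reverse containments $Ker(Tr^A_\Lambda)\cap Im(\tau_3)\subseteq \tau_3(\mathcal{TA}_{g,1}(3))$ and the analogous statements for $B$ and $AB$, has already been absorbed into the proposition. The role of the hypothesis $g\geq 6$ is simply to propagate from the proposition, where it guarantees enough room for the constructions involving the Johnson homomorphism and the trace maps to be carried out.
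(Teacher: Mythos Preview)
Your proposal is correct and matches the paper's approach: the paper states this corollary (as Corollary~\ref{cor:A(3)-cap-B(3)}) immediately after Proposition~\ref{prop:Handlebody A(3)} without proof, treating it as an immediate consequence of the three characterizations via $Ker(Tr^A_\Lambda)$, $Ker(Tr^B_\Lambda)$ and $Im(\tau_3)$, exactly as you argue.
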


Finally, we prove the core result of this work:

\begin{theorem} \label{thm:equiv-rel-4}
The Heegaard map induces a bijection:
\[
\displaystyle{\lim_{g \to \infty}} (\mathcal{A}_{g,1}(4) \setminus \mathcal{M}_{g,1}(4)/\mathcal{B}_{g,1}(4))_{\mathcal{AB}_{g,1}}  \longrightarrow  \mathcal{S}_\mathbb{Z}^3.
\]
\end{theorem}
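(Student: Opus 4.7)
The plan is to derive Theorem \ref{thm:equiv-rel-4} by refining the Pitsch--Faes reformulation \eqref{eq:ref-equiv-int} from level $3$ up to level $4$. That the Heegaard map descends to the displayed quotient is automatic: if $\phi=\mu\xi_a\psi\xi_b\mu^{-1}$ with $\xi_a\in\mathcal{A}_{g,1}(4)$, $\xi_b\in\mathcal{B}_{g,1}(4)$ and $\mu\in\mathcal{AB}_{g,1}$, then $\xi_a$ and $\xi_b$ extend to the two handlebodies and $\mu$ extends to $\mathbb{S}^3$, so the resulting Heegaard splittings are diffeomorphic. Surjectivity is immediate from \cite[Thm. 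B]{faes2}. The content is therefore injectivity.

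Let $\phi,\psi\in\mathcal{M}_{g,1}(4)$ produce the same homology sphere. After stabilizing $g$ (taking $g\geq 6$), the reformulation \eqref{eq:ref-equiv-int} at $k=3$ supplies $\alpha\in\mathcal{A}_{g,1}(3)$, $\beta\in\mathcal{B}_{g,1}(3)$ and $\mu\in\mathcal{AB}_{g,1}$ with
\[
\phi=\mu\,\alpha\,\psi\,\beta\,\mu^{-1}.
\]
The key step is to push $\alpha$ and $\beta$ into the level-$4$ handlebody subgroups using $\tau_3$. Since $\tau_3$ is a homomorphism on $\mathcal{M}_{g,1}(3)$, vanishes on $\mathcal{M}_{g,1}(4)$, and is equivariant under the $\mathcal{M}_{g,1}$-conjugation action on its target, the displayed identity gives
\[
0=\tau_3(\phi)=\mu\cdot\bigl(\tau_3(\alpha)+\tau_3(\beta)\bigr),
\]
so $\tau_3(\alpha)=-\tau_3(\beta)\in\tau_3(\mathcal{A}_{g,1}(3))\cap\tau_3(\mathcal{B}_{g,1}(3))$. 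By Corollary \ref{cor:intro-2}, this intersection equals $\tau_3(\mathcal{AB}_{g,1}(3))$, so there exists $\nu\in\mathcal{AB}_{g,1}(3)$ with $\tau_3(\nu)=\tau_3(\alpha)$.

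Setting $\alpha'=\alpha\nu^{-1}$ and $\beta'=\nu\beta$, the membership $\nu\in\mathcal{A}_{g,1}\cap\mathcal{B}_{g,1}$ keeps $\alpha'\in\mathcal{A}_{g,1}(3)$ and $\beta'\in\mathcal{B}_{g,1}(3)$, and by construction $\tau_3(\alpha')=\tau_3(\beta')=0$, so using $\mathrm{Ker}(\tau_3|_{\mathcal{M}_{g,1}(3)})=\mathcal{M}_{g,1}(4)$ they land in $\mathcal{A}_{g,1}(4)$ and $\mathcal{B}_{g,1}(4)$. Rewriting
\[
\phi=\mu\,\alpha'\,(\nu\psi\nu^{-1})\,\beta'\,\mu^{-1}=(\mu\nu)\,(\nu^{-1}\alpha'\nu)\,\psi\,(\nu^{-1}\beta'\nu)\,(\mu\nu)^{-1}
\]
and observing that $\mu\nu\in\mathcal{AB}_{g,1}$, while $\nu^{-1}\alpha'\nu\in\mathcal{A}_{g,1}(4)$ and $\nu^{-1}\beta'\nu\in\mathcal{B}_{g,1}(4)$ by normality of $\mathcal{M}_{g,1}(4)$ in $\mathcal{M}_{g,1}$, exhibits $\phi$ and $\psi$ as related by the refined equivalence.

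The main obstacle in this argument is not the symbolic manipulation above but the input Corollary \ref{cor:intro-2}, which in turn rests on Proposition \ref{prop:intro-3} and on the construction and kernel analysis of the antisymmetric Lagrangian traces $Tr^A_\Lambda$ and $Tr^B_\Lambda$. Once those are available, the theorem becomes the natural inductive extension of the Pitsch--Faes reformulations at $k=1,2,3$, driven by the fact that $\tau_3$ detects precisely the jump from level $3$ to level $4$ of the Johnson filtration.
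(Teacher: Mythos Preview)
Your proof is correct and follows essentially the same route as the paper: start from the level-$3$ reformulation \eqref{eq:ref-equiv-int}, apply $\tau_3$ to force $\tau_3(\alpha)=-\tau_3(\beta)\in\tau_3(\mathcal{A}_{g,1}(3))\cap\tau_3(\mathcal{B}_{g,1}(3))$, invoke Corollary~\ref{cor:intro-2} to find $\nu\in\mathcal{AB}_{g,1}(3)$ with the matching $\tau_3$-value, and absorb $\nu$ into the factorization. Your detour through $\alpha'=\alpha\nu^{-1}$, $\beta'=\nu\beta$ and then a second conjugation is unnecessary (indeed $\nu^{-1}\alpha'\nu=\nu^{-1}\alpha$ and $\nu^{-1}\beta'\nu=\beta\nu$, which is exactly the paper's one-step rewriting $\phi=(\mu\nu)(\nu^{-1}\alpha)\psi(\beta\nu)(\mu\nu)^{-1}$), but it is not wrong.
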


{\bf Plan of this paper.} In Section 2 we give some definitions and preliminary results about the Symplectic representation, the Tree Lie algebra, the Johnson homomorphism and coinvariants in tensors of the defining representation of $GL_g(\mathbb{Z})$.
Then, in Section 3 we show that the image of the third Johnson homomorphism can be decomposed as a direct sum of submodules formed by trees with the same coloring of leaves. In addition we also give a reduced set of generators of these submodules.
Later, in Section 4 we define the antisymmectic Lagrangian trace map $Tr_\Lambda^A$ and compute the image of the handlebody subgroups by the third Johnson homomorphims to prove Proposition \ref{prop:intro-3} and deduce Corollaries \ref{cor:intro-1} and \ref{cor:intro-2}.
Finally, we end this section by giving a proof of Theorem \ref{thm:equiv-rel-4}.

{\bf Acknowledgements.} The author would like to thank Prof. Wolfgang Pitsch for all his valuable comments and corrections throughout the development of this work.

\section{Preliminary results}

\subsection{The Symplectic representation}\label{subsec:symprep}
Let $\Sigma_{g,1}$ be an oriented surface of genus $g$ with a marked $2$-disk .
The homology classes of the curves $\{ a_i \ | \ 1 \leq i \leq g\}$ and $\{ b_i \ | \ 1 \leq i \leq g \}$ depicted in Figure~\ref{fig:homology_basis} form a basis of $H_1(\Sigma_{g,1};\mathbb{Z})$.
In addition, if we consider the symplectic form $\omega$ on $H_1(\Sigma_{g,1};\mathbb{Z})$ induced by the transverse intersection of oriented paths on $\Sigma_{g,1}$, we get that the aforementioned basis indeed forms a symplectic basis.
In other words, $\omega(a_i,b_j)=-\omega(b_j,a_i)$ and
$\omega(a_i,b_j)=1$ if and only if $i=j$.

In particular such symplectic basis induces two supplementary transverse Lagrangians $A$ and $B$, respectively generated by the curves $\{ a_i \ | \ 1 \leq i \leq g\}$ and $\{ b_i \ | \ 1 \leq i \leq g \}$. Therefore, as a symplectic space, we have a decomposition $H_1(\Sigma_{g,1};\mathbb{Z}) = A \oplus B$.

\begin{figure}[H]
\begin{center}
\begin{tikzpicture}[scale=.7]
\draw[very thick] (-4.5,-2) -- (5,-2);
\draw[very thick] (-4.5,2) -- (5,2);
\draw[very thick] (-4.5,2) arc [radius=2, start angle=90, end angle=270];

\draw[very thick] (-4.5,0) circle [radius=.4];
\draw[very thick] (-2,0) circle [radius=.4];
\draw[very thick] (2,0) circle [radius=.4];

\draw[thick, dotted] (-0.5,0) -- (0.5,0);

\draw[<-,thick] (1.2,0) to [out=90,in=180] (2,0.8);
\draw[thick] (2.8,0) to [out=90,in=0] (2,0.8);
\draw[thick] (1.2,0) to [out=-90,in=180] (2,-0.8) to [out=0,in=-90] (2.8,0);

\draw[<-, thick] (-5.3,0) to [out=90,in=180] (-4.5,0.8);
\draw[thick] (-3.7,0) to [out=90,in=0] (-4.5,0.8);
\draw[thick] (-5.3,0) to [out=-90,in=180] (-4.5,-0.8) to [out=0,in=-90] (-3.7,0);

\draw[<-,thick] (-2.8,0) to [out=90,in=180] (-2,0.8);
\draw[thick] (-1.2,0) to [out=90,in=0] (-2,0.8);
\draw[thick] (-2.8,0) to [out=-90,in=180] (-2,-0.8) to [out=0,in=-90] (-1.2,0);

\draw[thick] (-4.5,-0.4) to [out=180,in=90] (-5,-1.2);
\draw[->,thick] (-4.5,-2) to [out=180,in=-90] (-5,-1.2);
\draw[thick, dashed] (-4.5,-0.4) to [out=0,in=0] (-4.5,-2);
\draw[thick] (-2,-0.4) to [out=180,in=90] (-2.5,-1.2);
\draw[->,thick] (-2,-2) to [out=180,in=-90] (-2.5,-1.2);
\draw[thick, dashed] (-2,-0.4) to [out=0,in=0] (-2,-2);
\draw[thick] (2,-0.4) to [out=180,in=90] (1.5,-1.2);
\draw[->,thick] (2,-2) to [out=180,in=-90] (1.5,-1.2);
\draw[thick, dashed] (2,-0.4) to [out=0,in=0] (2,-2);

\node [left] at (-5,-1.2) {$b_1$};
\node [above] at (-4.5,0.8) {$a_1$};
\node [left] at (-2.5,-1.2) {$b_2$};
\node [left] at (1.5,-1.2) {$b_g$};
\node [above] at (-2,0.8) {$a_2$};
\node [above] at (2,0.8) {$a_g$};

\draw[thick,pattern=north west lines] (5,-2) to [out=130,in=-130] (5,2) to [out=-50,in=50] (5,-2);

\end{tikzpicture}
\end{center}
\caption{Symplectic basis of $H_1(\Sigma_{g,1};\mathbb{Z})$}
\label{fig:homology_basis}
\end{figure}
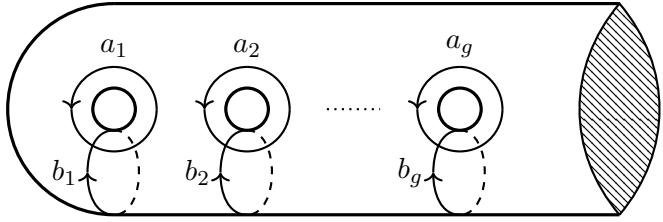

The action of the mapping class group $\mathcal{M}_{g,1}$ on $H_1(\Sigma_{g,1};\mathbb{Z})$ gives the Symplectic representation of $\mathcal{M}_{g,1}$, that is, a surjective map $\mathcal{M}_{g,1}\rightarrow Sp_g(\mathbb{Z})$ , where $Sp_g(\mathbb{Z})$ denotes the Symplectic group of matrices of size $g\times g$.
In all this work we write elements in $Sp_g(\mathbb{Z})$ by block matrices with respect the decomposition $H_1(\Sigma_{g,1};\mathbb{Z})=A \oplus B$.
In particular, for the action of elements from $\mathcal{AB}_{g,1}$,
since such elements keep invariant the Lagrangians $A$ and $B$, their image by the symplectic representation takes the form:
\[
\left(
\begin{matrix}
G & 0 \\
0 & {}^tG^{-1}
\end{matrix}
\right)
\quad \text{with} \quad G\in GL_g(\mathbb{Z}),
\]
where $GL_g(\mathbb{Z})$ denotes the General linear group of matrices $g\times g$.
By convention  when specifying an element $G\in GL_g(\mathbb{Z})$ by its action on the basis of $H_1(\Sigma_{g,1};\mathbb{Z})$, we will only write the action on the elements that are not sent to themselves.

\subsection{The Tree Lie algebra}

In this work we will perform the majority of computations in the Tree Lie algebra. For this reason we dedicate this section to exhibit the definition of such Lie algebra as well as some of its properties. For a more detailed review we refer the reader to \cite[Sec. 3]{pitsch3}.
 
Let $H=H_1(\Sigma_{g,1},\mathbb{Z})$, the Tree Lie algebra
$\mathcal{A}(H)$ is the Lie algebra of uni-trivalent trees with cyclically oriented inner vertices that are labelled by elements of $H$ modulo linearity in the labels and  the so-called IHX and AS relations:

\[
IHX:\quad
\begin{tikzpicture}[baseline=-3pt, scale=.6]
\draw[thick] (-0.5,0.5) -- (-0.5,-0.5);
\draw[thick] (-1,0.5) -- (0, 0.5);
\draw[thick] (-1,-0.5) -- (0, -0.5);
\end{tikzpicture}\;
=
\;
\begin{tikzpicture}[baseline=-3pt, scale=.6]
\draw[thick] (-1,0) -- (0,0);
\draw[thick] (-1,0.5) -- (-1, -0.5);
\draw[thick] (0,0.5) -- (0, -0.5);
\end{tikzpicture}
\;
-
\;
\begin{tikzpicture}[baseline=-3pt, scale=.6]
\draw[thick] (-0.8,0.3) -- (-0.2,0.3);
\draw[thick] (-1,0.5) -- (0, -0.5);
\draw[thick] (0,0.5) -- (-0.4, 0.1);
\draw[thick] (-0.6, -0.1) -- (-1,-0.5);
\end{tikzpicture}
\qquad \qquad AS:\quad
\begin{tikzpicture}[baseline=-3pt, scale=.6]
    \draw[thick] (0,0) -- (90:0.6cm);
    \draw[thick] (0,0) -- (-50:0.6cm);
    \draw[thick] (0,0) -- (230:0.6cm);

    \node[below] at (210:0.6cm) {$T$};
    \node[below] at (-30:0.6cm) {$T$};
  \end{tikzpicture}=0.
\]

The Lie bracket on $\mathcal{A}(H)$ is defined as follows:
given two trees $T_1,T_2\in \mathcal{A}(H)$, denote by $V_1(T_1)$ and
$V_1(T_2)$ the sets of univalent vertices of the trees $T_1$ and $T_2$ respectively. For fixed univalent vertices $x\in V_1(T_1)$ and $y\in V_1(T_2)$ denote by
$T_1-xy-T_2$ the tree that results of forgetting the label $l_x$ of $x$ and $l_y$ of $y$ and gluing the univalent vertices $x$ and $y$.
Extending this construction linearly for all univalent vertices of $T_1$ and $T_2$ we get the Lie bracket of $\mathcal{A}(H)$,
\[
[T_1,T_2]=\sum_{\substack{x\in V_1(T_1) \\ y\in V_1(T_2)}}
\omega(l_x,l_y)\;T_1-xy-T_2.
\]

Remember that, since $\mathcal{A}(H)$ is a Lie algebra, we have the \textit{Jacobi identity}:
\[
[[X,Y],Z]+[[Z,X],Y]+[[Y,Z],X]=0 \quad \text{for} \; X,Y,Z\in \mathcal{A}(H).
\]
The Jacobi identity on $\mathcal{A}(H)$ is a consequence of the IHX relation.

The Tree Lie algebra $\mathcal{A}(H)$ is a graded Lie algebra where the grading is given by the number of trivalent vertices of the trees. We denote by $\mathcal{A}_k(H)$ the degree $k$ of this Lie algebra, that is, the subset formed by trees with $k$ trivalent vertices and $k+2$ univalent vertices. From now on we call \textit{leaves} the univalent vertices of a tree, and \textit{elementary trees} those trees that in each leaf have a generator of the first homology group $H_1(\Sigma_{g,1},\mathbb{Z})$.

In this work we will perform computations with trees of degree up to $3$.
In order to help the reader to follow our computations we exhibit here the AS and IHX relations in $\mathcal{A}_k(H)$ for $k\leq 3$ as well as some of their consequences.
Let $a,b,c,d,e\in H$, for degrees $1$, $2$ and $3$, we have the following relations:
\vspace{0.3cm}

 \textit{AS relation and consequences}:
\[
\begin{array}{c}
\tree{a,b,a}=0, \qquad \tree{a,b,c}=-\tree{c,b,a}, \\
\tree{a,a,c,d}=0, \qquad \tree{a,b,c,d}=-\tree{b,a,c,d},\qquad
\tree{a,b,c,d}=\tree{d,c,b,a}, \\
\tree{a,a,c,d,e}=0, \qquad \tree{a,b,c,d,e}=-\tree{b,a,c,d,e}\quad \text{and} \quad
\tree{a,b,c,d}=-\tree{e,d,c,b,a}.
\end{array}
\]

\textit{IHX relation and consequences}:
\[
\tree{a,b,c,d}=\tree{a,c,b,d}+\tree{c,b,a,d},
\]
\[
\tree{a,b,c,d,e}=\tree{a,c,b,d,e}+\tree{c,b,a,d,e}
\quad \text{and} \quad
\tree{a,b,c,d,e}=\tree{a,b,d,c,e}+\tree{a,b,e,d,c}.
\]

\subsection{The Tree level Johnson homomorphism}
\label{subsec-tree}
In this section we review the definition of the Johnson homomorphisms and its relation with the Tree Lie algebra. For a more detailed exposition we refer the reader to \cite{pitsch3}.
Let $\mathcal{L}_k(H)$ denote the quotient $\Gamma_k\pi_1(\Sigma_{g,1})/\Gamma_{k+1}\pi_1(\Sigma_{g,1})$, where $\Gamma_k(\pi_1(\Sigma_{g,1}))$
denotes the $k$-th commutator of $\pi_1(\Sigma_{g,1})$. The group of derivations
$D_k(H)$ is the kernel of the map $[\;,\;]:H\otimes \mathcal{L}_{k+1}(H)\rightarrow \mathcal{L}_{k+2}(H)$.
In \cite[Lem. 2C]{jon_1} Johnson defined the first Johnson homomorphism $\tau_1:\mathcal{T}_{g,1}\rightarrow Hom (H,\Lambda^2H)$.
By tensor-hom adjunction this last group is isomorphic to $H\otimes\Lambda^2H$. Then in
\cite[Thm. 1]{jon_1} he proved that the image of $\tau_1$ is $\Lambda^3H$,
where $\Lambda^3H$ is embedded in $H\otimes \Lambda^2H$ by sending $a\wedge b\wedge c$ to the element
$a\otimes[b,c]+b\otimes [c,a]+c\otimes [a,b]$ and hence $\Lambda^3H$ is identified with $D_1(H)$.

Later, in \cite[Sec. 2]{mor-ab} Morita generalized the Johnson homomorphism as a series of homomorphisms $\tau_k: \mathcal{M}_{g,1}(k)\rightarrow Hom(H,\mathcal{L}_{k+1}(H))$, called the higher Johnson homomorphims. Then in \cite[Cor. 3.2]{mor-ab} he proved that the image of these homomorphisms are respectively in the group of derivations $D_k(H)$ and hence the $k$-th Johnson homomorphism takes the form: $\tau_k:\mathcal{M}_{g,1}(k)\rightarrow D_k(H)$.
Moreover, by \cite[Thm. 4.8]{mor-ab} these homomorphisms assemble into a homomorphism of graded Lie algebras
\[
\tau: \oplus_k\mathcal{M}_{g,1}(k)/\mathcal{M}_{g,1}(k+1)\rightarrow \oplus_k D_k(H).
\]
The group of derivations $D_1(H)$ and $D_2(H)$ have explicit descriptions respectively given by Johnson \cite[Sec. 4]{jon_1},
and Faes \cite[Prop. 2.1]{faes1}.

One can embed the Lie algebra $D(H)=\oplus_k D_k(H)$ into the Lie algebra of trees $\mathcal{A}(H)$.
To be more precise,
recall that for each $k\geq 1$, the tensor-hom adjunction gives an isomorphism, $Hom(H,\mathcal{L}_{k+1})\simeq H \otimes \mathcal{L}_{k+1}$. Moreover $\mathcal{L}_{k+1}$ is canonically identified with the abelian group of rooted trees $\mathcal{A}^r_{k+1}(H)$, which is formed by trees with $k+1$ trivalent vertices and a distinguished univalent vertex called the root and represented by $\ast$. In particular, we can view $D_k(H)$ as
a submodule of $H\otimes \mathcal{A}^r_{k+1}(H)$. Then if to each elementary tensor $u\otimes T\in H\otimes \mathcal{A}^r_{k+1}(H)$ we associate the tree $T_u\in \mathcal{A}_{k}(H)$, obtained by labelling the root of $T$ by $u$, extending by linearity we get the ``labelling map'',
\[
Lab: H\otimes \mathcal{A}^r_{k+1}(H) \rightarrow \mathcal{A}_k(H).
\]

Moreover, for any $k$, there are maps
\begin{align*}
\eta_k: \mathcal{A}_k(H) & \rightarrow D_k(H) \\
T & \longmapsto \sum_{x\in v(T)} l_x\otimes T^x,
\end{align*}
where $l_x$ is the element of $H$ labelling the vertex $x$ and $T^x$ is the rooted tree obtained by setting $x$ to be the root in $T$, which can be done inductively by considering that the rooted tree $(*,a,b)$ corresponds to $[b,a]$. These maps assemble into a graded Lie algebra morphism which we refer to as ``the expansion map''. The Labelling map and the expansion map are not isomorphisms but they are rational isomorphisms. To be more precise,
\begin{lemma}[Lemma 3.1 in \cite{pitsch}]
\label{lema:exp-lab}
The expansion and the labelling map have the following properties:
\begin{enumerate}[i)]
\item For any $k\geq 1$, the composite
\[ \mathcal{A}_k(H)\xrightarrow{\eta_k} D_k(H) \xrightarrow{Lab} \mathcal{A}_k(H) \]
is multiplication by $k+2$. In particular, the involved maps are rational isomorphisms.
\item The Labelling map Lab: $D_k(H) \rightarrow \mathcal{A}_k(H)$ is injective.
\end{enumerate}
\end{lemma}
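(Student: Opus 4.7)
For part (i), I would proceed by direct combinatorial computation; for part (ii), the plan is to establish that $\eta_k\circ Lab$ acts on $D_k(H)$ as multiplication by $k+2$, and then deduce injectivity from torsion-freeness.

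First, for (i): given $T\in\mathcal{A}_k(H)$, which by construction has exactly $k+2$ leaves, I would unfold both maps to obtain
\[
Lab(\eta_k(T))=Lab\Bigl(\sum_{x\in V_1(T)} l_x\otimes T^x\Bigr)=\sum_{x\in V_1(T)} (T^x)_{l_x}.
\]
Each summand $(T^x)_{l_x}$ equals $T$, since the operations ``set $x$ as root and forget $l_x$'' and ``relabel the new root by $l_x$'' are mutually inverse. Summing over the $k+2$ leaves yields $(k+2)T$, proving the identity $Lab\circ\eta_k=(k+2)\mathrm{id}_{\mathcal{A}_k(H)}$. The ``rational isomorphism'' clause then follows once (ii) is in hand: applying $Lab$ to $\eta_k(Lab(\xi))-(k+2)\xi$ for any $\xi\in D_k(H)$ gives zero, so injectivity of $Lab$ on $D_k(H)$ forces $\eta_k\circ Lab=(k+2)\mathrm{id}_{D_k(H)}$, and both maps become isomorphisms after tensoring with $\mathbb{Q}$.

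For part (ii), the key point is that $D_k(H)$ is torsion-free: it embeds in $H\otimes\mathcal{L}_{k+1}(H)$, which is free abelian because $\mathcal{L}_{k+1}(H)$ is a graded piece of the free Lie algebra on a free abelian group. Hence it suffices to show that $\eta_k\circ Lab$ acts on $D_k(H)$ as multiplication by $k+2$, for then any $\xi\in\mathrm{Ker}(Lab)\cap D_k(H)$ satisfies $(k+2)\xi=0$ and therefore $\xi=0$. For $\xi=\sum_i u_i\otimes T_i\in D_k(H)$, I would expand
\[
\eta_k(Lab(\xi))=\xi+\sum_i\sum_{y\in V_1(T_i)} l_y\otimes R_y((T_i)_{u_i}),
\]
where the first summand collects the contribution from the original root of each $T_i$ and $R_y$ denotes rerooting at the leaf $y$ of $(T_i)_{u_i}$. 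The task then reduces to proving that the rerooting sum equals $(k+1)\xi$, using the derivation condition $\sum_i[u_i,T_i]=0$ in $\mathcal{L}_{k+2}(H)$ together with the IHX relation to redistribute labels along internal edges.

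The main obstacle is precisely this rerooting identity. Geometrically it encodes the fact that an element of $D_k(H)$ amounts to an unrooted tree together with a coherent choice among its $k+2$ possible root positions, with the derivation condition ensuring that all choices become interchangeable after summation. While for $k=1$ the identification $D_1(H)\simeq\Lambda^3 H$ makes the claim immediate, for higher $k$ one must carefully propagate the Jacobi identity through each rerooting step by means of iterated IHX moves, essentially transporting the ``old root'' label along the unique path in the underlying tree connecting it to the ``new root''.
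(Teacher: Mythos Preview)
The paper does not give its own proof of this lemma: it is quoted verbatim from \cite{pitsch} (Lemma~3.1 there), so there is no in-paper argument to compare against.

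Your treatment of part~(i) is correct and is exactly the standard computation: each of the $k+2$ summands $(T^x)_{l_x}$ recovers $T$, and the composite is multiplication by $k+2$.

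For part~(ii) your strategy is the right one, and you are honest about where the difficulty lies. The identity $\eta_k\circ Lab=(k+2)\,\mathrm{id}$ on $D_k(H)$, combined with the torsion-freeness of $D_k(H)\subset H\otimes\mathcal{L}_{k+1}(H)$, does yield injectivity of $Lab$. What remains genuinely unproved in your proposal is the ``rerooting identity'' itself: that for $\xi=\sum_i u_i\otimes T_i\in D_k(H)$ the sum of the rerooted terms equals $(k+1)\xi$. Your geometric description (transporting the old root label along the unique path to the new root via iterated IHX moves) is the correct picture, but turning it into a proof requires an explicit induction. One clean way is to show, for a single elementary tensor $u\otimes T$, that rerooting $(T)_u$ at a leaf $y$ adjacent to the same trivalent vertex as the root differs from $u\otimes T$ by a term of the form $v\otimes[u,T']$; summing over all leaves and using the derivation condition $\sum_i[u_i,T_i]=0$ then collapses the correction terms. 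As written, your proposal identifies the obstacle correctly but stops short of carrying out this induction, so the argument for~(ii) is a plan rather than a proof.
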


In all this work we refer to the Johnson homomorphism $\tau_k$ as the composition of maps
\begin{equation}
\label{eq:def-tau-k}
 \mathcal{M}_{g,1}(k)\xrightarrow{\tau_k} D_k(H) \xrightarrow{i} D_k(H)\otimes \mathbb{Q} \xrightarrow{\eta_k^{-1}} \mathcal{A}_k(H)\otimes \mathbb{Q}.
\end{equation}

By \cite[Thm. 3.2]{pitsch3} and \cite[Thm. 4.8]{mor-ab}, the expansion map $\eta$ and the Johnson homomorphism $\tau$ are homomorphisms of Lie algebras. Then, if we denote $\Gamma_k$ the $k$-tuple Lie bracket, the map \eqref{eq:def-tau-k} restricted to $\mathcal{T}_{g,1}(k)$ becomes:
\begin{equation}
\label{eq:def-tau-k-torelli}
 \mathcal{T}_{g,1}(k)\xrightarrow{\tau_k} \Gamma_k(D_1(H))\xrightarrow{\eta_1^{-1}} \Gamma_k(\mathcal{A}_1(H))\subset \mathcal{A}_k(H).
\end{equation}

In this work we are interested in the case $k=3$. By \cite[Cor. 4.6]{FMS}, in this case we know that $\tau_3(\mathcal{M}_{g,1}(3))=\tau_3(\mathcal{T}_{g,1}(3))$ and hence by \eqref{eq:def-tau-k-torelli} the third Johnson homomorphism takes the form, $\tau_3:\mathcal{M}_{g,1}(3)\rightarrow \mathcal{A}_3(H)$.

\subsection{Coinvariants in tensors of the defining representation of $GL_g(\mathbb{Z})$}\label{sec:TensorandTrees}

In this section we compute the coinvariants module $(\otimes^{n} H)_{GL_g(\mathbb{Z})}$.
This computation is analogous to the one given in \cite[Prop. 2.4]{PR} where it was carried out over the rationals. Hence, we focus on the part of the computation that differs from the rational case and conclude as in \cite[Prop. 2.4]{PR}.

Recall that we have fixed a symplectic basis $\{a_i,b_i\}_{1 \leq i \leq g}$ of $H$. We will call an elementary tensor $u_1\otimes u_2 \dots \otimes u_p \in  \otimes^p H$ a \emph{basic tensor} if all its entries $u_i$ belong to our preferred symplectic basis. Moreover, we call such element \emph{balanced} if, for each integer $\leq i \leq g$, the number of entries $a_i$ and $b_i$ are the same.

\begin{proposition}\label{prop:chords}
For any integer $1 \leq n < g$, the coinvaiants quotient $(\otimes^{n} H)_{GL_g(\mathbb{Z})}$ is zero for $n$ odd. And for $n$ even it is generated by the images of the basic tensors  for which  for each $1 \leq  i \leq g$, either the pair of elements $\{a_i,b_i\}$ appears exactly once or does not appear at all.

Moreover, all basic tensors that are not balanced are zero in the coinvariants quotient.
\end{proposition}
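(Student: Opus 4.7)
The proof follows the architecture of the rational version in \cite[Prop. 2.4]{PR}; the only step that needs genuine new input for integral coefficients is the vanishing of unbalanced basic tensors, where the rational proof invokes a diagonal torus unavailable over $\mathbb{Z}$. I would first note that because $GL_g(\mathbb{Z})$ acts block-diagonally on $H=A\oplus B$, the splitting $\otimes^n H=\bigoplus_{S\subseteq \{1,\ldots,n\}}V_S$ by position pattern (the $i$-th tensor factor lies in $A$ if $i\in S$ and in $B$ otherwise) is $GL_g(\mathbb{Z})$-equivariant. Each $V_S$ is $\mathbb{Z}$-spanned by basic tensors, so the coinvariants break up accordingly into summands generated by the images of basic tensors, and it suffices to analyze each summand.

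The main claim---from which the ``zero for $n$ odd'' statement follows, since any basic tensor of odd total degree must be unbalanced at some index---is that every unbalanced basic tensor $T$ vanishes in $(\otimes^{n} H)_{GL_g(\mathbb{Z})}$. The rational argument uses $\mathrm{diag}(\lambda,1,\ldots,1)\in GL_g(\mathbb{Q})$, which scales $T$ by $\lambda^{p_i-q_i}$; picking $\lambda^{p_i-q_i}\neq 1$ instantly gives $T=0$. Over $\mathbb{Z}$ the only available scalars are $\pm 1$, yielding at best $2T=0$, and the plan is to combine three families of elements of $GL_g(\mathbb{Z})$, all available because the hypothesis $n<g$ leaves some index $j$ unused in $T$:
(i) the transvections sending $a_i\mapsto a_i+k a_j$ and $b_j\mapsto b_j-k b_i$ (with $j$ not appearing in $T$ and all other basis elements fixed), whose powers in $k\in\mathbb{Z}$ yield a polynomial identity $\sum_{l\geq 1} k^l R_l=0$ in the coinvariants module, producing, by finite differences, torsion relations among the substitution tensors;
(ii) the symplectic rotation $\rho_{ij}\in SL_g(\mathbb{Z})$ sending $a_i\mapsto a_j$, $a_j\mapsto -a_i$ (extended trivially on the other indices), which introduces sign-changing identifications between tensors related by an index swap;
(iii) permutation matrices in $S_g\subset GL_g(\mathbb{Z})$ to identify tensors related by index relabelings.
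The rotation (ii) is the crucial new ingredient: its sign flip neutralizes the $2$-torsion produced by (i). As a prototype, for $T=a_1\otimes a_1$, (i) with the transvection $a_1\mapsto a_1+a_2$ gives $a_1\otimes a_2+a_2\otimes a_1+a_2\otimes a_2=0$ in coinvariants, (ii) with $\rho_{12}$ gives $a_1\otimes a_2+a_2\otimes a_1=0$, and the difference followed by (iii) yields $T=0$. The general unbalanced case would follow by induction on a suitable refinement of the imbalance data (for instance on $\max_i|p_i-q_i|$ together with the number of maximizing indices), at each step orchestrating (i)--(iii) in the same fashion.

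Once the vanishing of unbalanced basic tensors is established, the remainder of the proof---reducing a balanced basic tensor modulo coinvariants to one in which each pair $\{a_i,b_i\}$ appears at most once---is carried out exactly as in the rational case of \cite[Prop. 2.4]{PR}. The main obstacle is therefore the combinatorial orchestration of (i), (ii), and (iii) in the unbalanced step: one must arrange the choices so that the torsion coming from the transvections cancels against the identifications obtained from the rotations and permutations. This is the sole place where working over $\mathbb{Z}$ rather than $\mathbb{Q}$ forces new content beyond the rational template.
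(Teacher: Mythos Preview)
Your overall architecture matches the paper's, but you are working harder than necessary at the one place where integral coefficients demand new input. The paper, like you, defers most of the argument to \cite[Prop.~2.4]{PR}; in particular, Cases~3 and~4 there (which handle the reduction of tensors with $p_i=1,\,q_i>1$ and with $p_i,q_i\geq 2$, respectively) already go through verbatim over~$\mathbb{Z}$. Consequently the only genuinely new step over the integers is the case where some index has $p_i>0$ and $q_i=0$ (or symmetrically); once those tensors are known to vanish, the general unbalanced case follows from Cases~3 and~4 without further work.

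For this narrower step the paper uses a one-line trick that sidesteps all torsion. Rather than acting on $T=(\otimes^{p_1}a_1)\otimes R$ itself, one acts on the auxiliary tensor $T'=a_g\otimes(\otimes^{p_1-1}a_1)\otimes R$, using the free index~$g$ guaranteed by $n<g$. The elementary matrix $Id+E_{1,g}\in GL_g(\mathbb{Z})$ sends $a_g\mapsto a_g+a_1$ while fixing $a_1$ and every entry of~$R$ (neither $a_g$ nor $b_1$ occurs there), so in the coinvariants $T'=(Id+E_{1,g})\cdot T'=T'+T$, hence $T=0$ directly. Your own prototype $T=a_1\otimes a_1$ illustrates the contrast: your route needs a transvection, a rotation, and a permutation to cancel the resulting $2$-torsion, whereas the paper simply takes $T'=a_2\otimes a_1$, applies $a_2\mapsto a_2+a_1$, and reads off $T=0$ immediately. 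Your proposed orchestration of (i)--(iii) is therefore unnecessary, and since you leave it as an outline rather than carrying it through, it is also the least secure part of your proposal; the auxiliary-tensor trick both closes that gap and aligns you with the paper.
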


\begin{proof}
The canonical action of the symmetric group $\mathfrak{S}_{k}$ by permuting the factors of $\otimes^kH$ commutes with the $GL_g(\mathbb{Z})$ action and hence induces an action on the coinvariants quotient $(\otimes^{n} H)_{GL_g(\mathbb{Z})}$. We call \emph{permutation equivalent} any two elements that are related by such a permutation. In particular, any basic tensor is permutation equivalent to a basic tensor of the form:
\[
(\otimes^{p_1}a_1)\otimes (\otimes^{q_1}b_1)\otimes \cdots \otimes (\otimes^{p_g}a_g)\otimes (\otimes^{q_g}b_g),
\]
where $p_i,q_i$ are non-negative integers with $\sum_{i=1}^{g}(p_i+q_i)=n$.


Since $n<g$, up to a permutation of the indices, we may assume that $p_g+q_g=0$. To make the notation lighter in all what follows we set $R :=\bigotimes_{i=2}^{g-1}((\otimes^{p_i}a_i)\otimes (\otimes^{q_i}b_i))$, so any basic tensor is permutation equivalent to
\[
(\otimes^{p_1}a_1)\otimes (\otimes^{q_1}b_1)\otimes R.
\]

The only point that we have to prove is that the basic tensors that are permutation equivalent to one for which $\lbrace p_1=0,\;q_1> 0\rbrace$ or $\lbrace q_1=0,\; p_1> 0\rbrace$ are sent to $0$ in  $(\otimes^{n} H)_{GL_g(\mathbb{Z})}$.
Then we conclude by Cases 3 and 4 in the proof of \cite[Prop. 2.4]{PR}.
 We prove this for $q_1=0,\; p_1>0$, so for a basic tensor permutation equivalent to $(\otimes^{p_1}a_1)\otimes R$, the other cases are analogous.
 
Consider the basic tensor
$ a_{g}\otimes(\otimes^{p_1-1}a_1)\otimes R, $
and the action by the elementary matrix $Id + E_{1,g}\in GL_g(\mathbb{Z})$. Then in $(\otimes^{n} H)_{GL_g(\mathbb{Z})}$ we have that
\[
 \begin{aligned}
 	a_{g}\otimes(\otimes^{p_1-1}a_1)\otimes R
 	& = (Id + E_{1,g}) (a_{g}\otimes(\otimes^{p_1-1}a_1)\otimes R) \\
 	& = (a_g + a_1)\otimes(\otimes^{p_1-1}a_1)\otimes R \\
 	& = a_g\otimes(\otimes^{p_1-1}a_1)\otimes R + (\otimes^{p_1}a_1)\otimes R,
 \end{aligned}
\]

and hence, in $(\otimes^{n} H)_{GL_g(\mathbb{Z})}$,
we get that
$(\otimes^{p_1}a_1) \otimes R=0.$

From here following the same lines given in Cases 3 and 4 of \cite[Prop. 2.4]{PR} we get the result for $n$ even.
Finally, for $n$ odd, we use the same arguments to show that $(\otimes^{n} H)_{GL_g(\mathbb{Z})}$ is zero:  Given an elementary tensor of $(\otimes^{n} H)_{GL_g(\mathbb{Z})}$, there exist $i\in \mathbb{N}$ for which $p_i$ and $q_i$ do not coincide since $n$ is odd. If $p_i$ or $q_i$ are zero, we have already shown that the elementary tensor is zero in the coinvariants module.
And if $p_i,q_i>0$, by Case 4 of \cite[Prop. 2.4]{PR} such element is a sum of elementary tensors with $p_i=1,q_i>1$ or $p_i>1,q_i=1$, which are zero in the coinvariants module by by Case 3 of \cite[Prop. 2.4]{PR}.

\end{proof}

\section{The image of the third Johnson homomorphism}
We start this section by introducing some notation.
Let $i,j,k\in \mathbb{N}$ with $i+j-2=k$, we denote by $W(a^ib^j)$ the submodule of $\mathcal{A}_k(H)$ generated by trees with $i$ leaves coloured with $a$'s and $j$ leaves coloured with $b$'s.
Moreover, let $r,s\in \mathbb{N}$, we denote by
$W_k(a^{\geq r} b)$ and $W_k(ab^{\geq s})$ the submodules of $\mathcal{A}_k(H)$ generated by the trees with at least $r$ leaves colored with $a$'s, respectively $s$ leaves colored with $b$'s.
We also denote by $W_k(a^{\geq r} b^{\geq s})$ the intersection $W_k(a^{\geq r} b) \cap W_k(ab^{\geq s})$. To make notations lighter when $k=1$, we simply write $W$ instead of $W_1$. 
In addition, we set $\overline{W}$ instead of $W$ to denote the intersection of these modules with $Im(\tau_k)$. For instance, $\overline{W}(a^ib^j)=W(a^ib^j)\cap Im(\tau_k)$.

We now focus in our case of interest, $k=3$.
If we gather the trees with the same colouring of leaves we get a decomposition as $GL_g(\mathbb{Z})$-modules,
\[
\mathcal{A}_3(H)=W(a^5)\oplus W(a^4b^1)\oplus W(a^3b^2)\oplus W(a^2b^3)\oplus W(a^1b^4)\oplus W(b^5).
\]

In this section we show that the aforementioned decomposition induces an analogous decomposition for the image of the third Johnson homomorphism, that is, any element of $Im(\tau_3)$ can be expressed as a sum of elements of $\overline{W}(a^ib^j)$ with $i+j=5$.
Then we reduce the number of generators of each module $\overline{W}(a^ib^j)$ by using the Jacobi identity and exchanging the colours of some leaves that are contracted by the Lie bracket. 

\begin{proposition}
\label{prop:desc-m(3)}
For any integer $g\geq 5$, there is a decomposition of $GL_g(\mathbb{Z})$-modules,
\[
\tau_3(\mathcal{M}_{g,1}(3))=\overline{W}(a^5)\oplus \overline{W}(a^4b^1)\oplus \overline{W}(a^3b^2)\oplus \overline{W}(a^2b^3)\oplus \overline{W}(a^1b^4)\oplus \overline{W}(b^5).
\]

Moreover, the modules $\overline{W}(a^hb^{5-h})$ with $0\leq h\leq 5$ are the sum of modules:
\[
[[W(a^ib^{3-i}), W(a^jb^{3-j})], W(a^kb^{3-k})]\quad \text{with }
\begin{array}{c}
i,j,k\in \lbrace 0,1,2,3 \rbrace, \\
h=i+j+k-2.
\end{array}
\]
\end{proposition}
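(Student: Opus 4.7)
The plan is to exploit two facts already established in the excerpt. By Faes's Corollary 4.6 we have $\tau_3(\mathcal{M}_{g,1}(3))=\tau_3(\mathcal{T}_{g,1}(3))$, and by formula (2.2) combined with the Lie-algebra property of $\tau$, this last group coincides with the triple bracket $\Gamma_3(\mathcal{A}_1(H))$: indeed, since $\tau_1$ surjects onto $\mathcal{A}_1(H)$ after identifying $\mathcal{T}_{g,1}/\mathcal{T}_{g,1}(2)$ with $\mathcal{A}_1(H)$, every triple bracket of elements of $\mathcal{A}_1(H)$ lifts to a triple bracket in $\mathcal{T}_{g,1}(3)$. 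Consequently, every element of $\tau_3(\mathcal{M}_{g,1}(3))$ is a $\mathbb{Z}$-linear combination of triple brackets $[[X,Y],Z]$ with $X,Y,Z\in\mathcal{A}_1(H)$.

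The first step is to split $\mathcal{A}_1(H)$ according to the $a$/$b$-coloring of its leaves. Because $H=A\oplus B$ and every element of $\mathcal{A}_1(H)$ is a linear combination of $Y$-trees with three labelled leaves, multilinearity in the labels gives a $GL_g(\mathbb{Z})$-equivariant decomposition $\mathcal{A}_1(H)=W(a^3)\oplus W(a^2b)\oplus W(ab^2)\oplus W(b^3)$. The second step is to track how the tree Lie bracket interacts with this grading. Since the bracket is defined by summing over contractions of pairs of leaves weighted by $\omega$, and $\omega$ vanishes on $A\times A$ and on $B\times B$, a single contraction removes exactly one $a$-leaf and one $b$-leaf, so
\[
[W(a^pb^{3-p}),W(a^qb^{3-q})]\subset W_2(a^{p+q-1}b^{5-p-q}).
\]
Iterating this observation one gets $[[W(a^ib^{3-i}),W(a^jb^{3-j})],W(a^kb^{3-k})]\subset W(a^hb^{5-h})$ for $h=i+j+k-2$.

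Given now a generator $[[X,Y],Z]$ of $\tau_3(\mathcal{M}_{g,1}(3))$, I expand along $X=\sum_iX_i$, $Y=\sum_jY_j$, $Z=\sum_kZ_k$ and regroup the resulting $64$ summands $[[X_i,Y_j],Z_k]$ according to the integer $h=i+j+k-2$. This writes the original element as a sum of six pieces, one in each $W(a^hb^{5-h})$, and each piece is itself a sum of triple brackets of elements of $\mathcal{A}_1(H)$. By the lifting argument of the first paragraph, each piece therefore belongs to $\Gamma_3(\mathcal{A}_1(H))=\tau_3(\mathcal{M}_{g,1}(3))$, hence to $\overline{W}(a^hb^{5-h})$. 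Because $\mathcal{A}_3(H)=\bigoplus_{h}W(a^hb^{5-h})$ as $GL_g(\mathbb{Z})$-modules, the six pieces are uniquely determined, which yields the claimed direct sum decomposition of $\tau_3(\mathcal{M}_{g,1}(3))$ as a $GL_g(\mathbb{Z})$-module. The description of each $\overline{W}(a^hb^{5-h})$ as the sum of the displayed triple brackets falls out of the same argument.

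There is no substantial obstacle: the proof is essentially a piece of bookkeeping for the $a$/$b$-grading under the tree bracket, with the Faes--Morita input that $\tau_3(\mathcal{M}_{g,1}(3))$ is generated as a $\mathbb{Z}$-module by triple brackets of elements of $\mathcal{A}_1(H)$. The only point requiring a little care is to note that after regrouping, each individual summand is again a triple bracket with entries in $\mathcal{A}_1(H)$, so that it lies in the image of $\tau_3$ on its own; this is what upgrades the direct sum decomposition of $\mathcal{A}_3(H)$ to a decomposition of the submodule $\tau_3(\mathcal{M}_{g,1}(3))$ itself.
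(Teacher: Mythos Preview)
Your proof is correct and follows essentially the same approach as the paper: both use Faes's result to reduce to $\tau_3(\mathcal{T}_{g,1}(3))$, invoke the Lie-algebra property of $\tau$ together with the surjectivity of $\tau_1$ to identify this with $\Gamma_3(\mathcal{A}_1(H))$, and then decompose according to the $a$/$b$-coloring of leaves, using that a contraction via $\omega$ kills exactly one $a$-leaf and one $b$-leaf. Your version is slightly more explicit about why the bracket respects the grading, but the argument is the same.
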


\begin{proof}
Form the definition of $\overline{W}(a^ib^j)$ it is clear that $\overline{W}(a^ib^j)\subset \tau_3(\mathcal{M}_{g,1}(3))$.
We now prove that any element of $\tau_3(\mathcal{M}_{g,1}(3))$ can be written as a sum of elements in $\overline{W}(a^ib^j)$ with $i,j\in \mathbb{N}$ and $i+j=5$.

By \cite[Cor. 4.6]{FMS} we know that $\mathcal{M}_{g,1}(3)=\mathcal{T}_{g,1}(3)\cdot \mathcal{M}_{g,1}(4)$. Since $\mathcal{M}_{g,1}(4) =Ker(\tau_3)$, applying the third Johnson homomorphism $\tau_3$, we get that $\tau_3(\mathcal{M}_{g,1}(3))= \tau_3(\mathcal{T}_{g,1}(3))$.
By definition of $\mathcal{T}_{g,1}(3)$, any element of this group is a product of triple commutators of elements of $\mathcal{T}_{g,1}$.
Since, by \cite[Thm. 4.8]{mor-ab}, the Johnson homomorphism $\tau_k$ is a homomorphism of Lie algebras, then we get the following equalities:
\begin{equation}
\label{eq:com-3-john}
\tau_3(\mathcal{T}_{g,1}(3))=\tau_3([[\mathcal{T}_{g,1},\mathcal{T}_{g,1}],\mathcal{T}_{g,1}]])=[[\tau_1(\mathcal{T}_{g,1}),\tau_1(\mathcal{T}_{g,1})],\tau_1(\mathcal{T}_{g,1})].
\end{equation}

By \cite[Thm 1]{jon_1} we know that the first Johnson homomorphism $\tau_1: \mathcal{T}_{g,1}\rightarrow \mathcal{A}_1(H)$ is surjective. Moreover,
there is a decomposition as $GL_g(\mathbb{Z})$-modules:
\begin{equation}
\label{eq:dec-lambda3}
\mathcal{A}_1(H)= W(a^3)\oplus W(a^2b)\oplus W(ab^2)\oplus W(b^3).
\end{equation}

Then the group $\tau_3(\mathcal{T}_{g,1}(3))$ is the sum of the images of the triple Lie bracket 
\[ [[\;,\;],\;]: (\mathcal{A}_1(H)\wedge \mathcal{A}_1(H))\otimes \mathcal{A}_1(H)\rightarrow \mathcal{A}_3(H)\]
on the modules $(W(a^ib^{3-i}) \wedge W(a^jb^{3-j})) \otimes W(a^kb^{3-k})$ with $i,j,k\in \lbrace 0,1,2,3 \rbrace$, and these images are contained in $\overline{W}(a^hb^{5-h})$ with $h=i+j+k-2$ such that $0\leq h\leq 5$.

Moreover, if we gather the resulting trees of the triple Lie bracket with the same coloring of leaves, we have that the module $\overline{W}(a^hb^{5-h})$ with $0\leq h\leq 5$ is the sum of the modules
$[[W(a^ib^{3-i}), W(a^jb^{3-j})], W(a^kb^{3-k})]$ with $i,j,k\in \lbrace 0,1,2,3 \rbrace$ and $h=i+j+k-2$.
\end{proof}

\subsection{Lie brackets of trees}
\label{Sec:brackets-trees}

In all what follows, to make notations lighter, we define $W(a^{\geq 1}b^{\geq 1})=W(a^2b)\oplus W(ab^2)$ and we denote by 
$\Gamma_2(W(a^{\geq 1}b^{\geq 1}))$ and
$\Gamma_3(W(a^{\geq 1}b^{\geq 1}))$, respectively, the double and triple Lie brackets of elements in $W(a^{\geq 1}b^{\geq 1})$.

We reduce the number of generators of the modules $\overline{W}(a^hb^{5-h})$ with $0\leq h\leq 5$.
In particular, we reduce the number of submodules
$[[W(a^ib^{3-i}), W(a^jb^{3-j})], W(a^kb^{3-k})]$
that generate $\overline{W}(a^hb^{5-h})$.
We first give three preliminary results about the image of the Lie bracket $[\;,\;]:\mathcal{A}_1(H)\wedge\mathcal{A}_1(H)\rightarrow \mathcal{A}_2(H)$ on modules $[W(a^ib^{3-i}), W(a^jb^{3-j})]$.

\begin{lemma}
\label{lema:[A,B]}
For any integer $g\geq 5$, we have that 
\[
{[}W(a^3), W(b^3)]\subset [W(ab^2), W(a^2b)].
\]
\end{lemma}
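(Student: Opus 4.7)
I would first observe that both $[W(a^3),W(b^3)]$ and $[W(ab^2),W(a^2b)]$ lie in the submodule of $\mathcal A_2(H)$ generated by H-trees whose four leaves split into two with $a$-labels and two with $b$-labels. Indeed, for $T_1\in W(a^pb^q)$ and $T_2\in W(a^rb^s)$ the bracket $[T_1,T_2]$ contracts a single $a$-leaf of one factor against a $b$-leaf of the other via $\omega$, so the resulting tree has $p+r-1$ leaves with an $a$-label and $q+s-1$ leaves with a $b$-label. The lemma therefore reduces to showing that each generator $[a_i\wedge a_j\wedge a_k,\ b_l\wedge b_m\wedge b_n]$ can be realised as an element of $[W(ab^2),W(a^2b)]$.

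The key computational step is the following identity. Fix an auxiliary index $s$ not appearing in the support of the generator, which is possible since $g\ge 5$. Enumerating the non-zero contractions and applying the AS relations gives
\[
[\,\tree{a_p,a_q,b_s}\,,\ \tree{a_s,b_p,b_q}\,]\;=\;\tree{a_q,b_s,a_s,b_q}+\tree{a_p,b_s,a_s,b_p}-\tree{a_p,a_q,b_p,b_q}.
\]
The left-hand side belongs by construction to $[W(a^2b),W(ab^2)]$, and the third term on the right is precisely the target H-tree produced by a matching-index contraction in the expansion of the generator. Consequently, such a target lies in $[W(ab^2),W(a^2b)]$ modulo two ``mixed'' cross H-trees of the form $\tree{a_r,b_s,a_s,b_r}$.

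To handle the cross H-trees I would combine the IHX and AS relations in $\mathcal A_2(H)$ to write
\[
\tree{a_r,b_s,a_s,b_r}\;=\;\tree{a_r,b_r,a_s,b_s}-\tree{a_r,a_s,b_r,b_s}.
\]
The first summand is directly produced by the clean bracket $[\,\tree{a_r,b_r,a_t},\ \tree{a_s,b_s,b_t}\,]$ with a further auxiliary index $t$, since only the contraction $a_t\leftrightarrow b_t$ is non-zero; it therefore lies in $[W(a^2b),W(ab^2)]$. The second summand has the same combinatorial shape as the original target, so it is treated by reapplying the first identity with $(p,q)=(r,s)$ and an auxiliary index distinct from $s$.

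The main obstacle is controlling this recursion, which a priori introduces ever more auxiliary indices. The resolution is that the first displayed identity shows the class of $\tree{a_p,b_s,a_s,b_p}$ in the quotient $\mathcal A_2(H)/[W(ab^2),W(a^2b)]$ is independent of the auxiliary $s$; since $g\ge 5$ supplies several admissible choices of $s$, combining the resulting relations — using the rotation symmetry $\tree{a,b,c,d}=\tree{d,c,b,a}$ — forces this class to vanish, so each cross H-tree lies in $[W(ab^2),W(a^2b)]$. Unwinding the chain of identities then expresses each generator of $[W(a^3),W(b^3)]$ explicitly in $[W(ab^2),W(a^2b)]$, completing the proof.
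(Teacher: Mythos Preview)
Your plan has the right flavor --- auxiliary indices and single-bracket identities are exactly what the paper uses --- but there are two genuine gaps.

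First, the closure step is not justified. Writing $Y_{p,q}$ for the class of $\tree{a_p,a_q,b_p,b_q}$ and $X_{r,s}$ for the class of $\tree{a_r,b_s,a_s,b_r}$ in the quotient $Q=\mathcal A_2(H)/[W(ab^2),W(a^2b)]$, your first identity gives $Y_{p,q}=X_{p,s}+X_{q,s}$ and your IHX/clean-bracket step gives $X_{r,s}=-Y_{r,s}$. Together these yield only the \emph{symmetric} relation $Y_{p,q}+Y_{p,s}+Y_{q,s}=0$; with $g\ge5$ this makes all $Y_{i,j}$ equal and forces $3Y=0$, but not $Y=0$. The rotation symmetry you invoke adds nothing: it says $X_{r,s}=X_{s,r}$, equivalently $Y_{r,s}=Y_{s,r}$, which is already built in. So the recursion does not close as claimed. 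The paper proceeds differently: it produces an \emph{asymmetric} relation $-Y_{2,3}-Y_{1,3}+Y_{1,2}=0$ in $Q$ from a specific combination of \emph{three} brackets, then adds its $(1\!\leftrightarrow\!3)$-permuted version to obtain $2Y_{1,3}=0$; adding this back to the asymmetric relation yields the three-contraction generator in the target.

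Second, your plan treats only the H-trees $\tree{a_p,a_q,b_p,b_q}$ with both index pairs matching. The generators $[\tree{a_i,a_j,a_k},\tree{b_l,b_m,b_n}]$ with one or two common indices expand into H-trees of different shape, and you do not say how these are handled. The paper does these cases separately: with a single common index it swaps the two labels on the contracted pair of leaves to exhibit the bracket directly in $[W(a^2b),W(ab^2)]$; with two common indices it acts by an elementary matrix in $GL_g(\mathbb Z)$ on the already-treated three-contraction element to peel off the two-contraction generator.
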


\begin{proof}
Given an integer $g\geq 5$, the module $[W(a^3), W(b^3)]$ is generated by
the Lie brackets of the form $\Bigg[\tree{a_i,a_j,a_k},\tree{b_{i'},b_{j'},b_{k'}}\Bigg]$ with $1\leq i<j<k\leq g$ and
$1\leq i'<j'<k'\leq g$.
We show that all these brackets belong to $[W(ab^2), W(a^2b)]$. We analyse these brackets in terms of the number of contractions between the trees $\tree{a_i,a_j,a_k}$ and $\tree{b_{i'},b_{j'},b_{k'}}$.
If there are no contractions between these two trees then, by construction, their Lie bracket is zero.

\textbf{Three contractions.} If there are exactly $3$ contractions between $\tree{a_i,a_j,a_k}$ and $\tree{b_{i'},b_{j'},b_{k'}}$, we are in the $\mathfrak{S}_g$-orbit the element:
\begin{equation}
\label{eq:modWAB1}
\Bigg[\tree{a_1,a_2,a_3},\tree{b_{3},b_{2},b_{1}}\Bigg]=\tree{a_2,a_3,b_3,b_2}+\tree{a_3,a_1,b_1,b_3}+\tree{a_1,a_2,b_2,b_1}.
\end{equation}
Next we show that this element belongs to $[W(ab^2), W(a^2b)]$.
Consider the following sum of brackets:
\[
\begin{aligned}
& \Bigg[\tree{a_1,a_2,b_3},\tree{a_{3},b_{2},b_{1}}\Bigg]
-\Bigg[\tree{b_3,a_3,a_1},\tree{b_1,b_2,a_2}\Bigg]
-\Bigg[\tree{a_1,b_1,a_2},\tree{b_2,a_3,b_3}\Bigg] \\
& = \tree{a_2,b_3,a_3,b_2}+\tree{b_3,a_1,b_1,a_3}-\tree{a_1,a_2,b_2,b_1}
-\tree{b_3,a_3,b_2,a_2}-\tree{a_1,b_1,a_3,b_3}\\
& =  \tree{a_2,a_3,b_3,b_2}+\tree{a_3,a_1,b_1,b_3}-\tree{a_1,a_2,b_2,b_1},
\end{aligned}
\]
where in the last equality we used the IHX relation in the first two summands.
This shows,
\begin{equation}
\label{eq:modWAB2}
 \tree{a_2,a_3,b_3,b_2}+\tree{a_3,a_1,b_1,b_3}-\tree{a_1,a_2,b_2,b_1}\in
 [W(ab^2), W(a^2b)],
\end{equation}
and analogously, exchanging the subindices $1$ and $3$,
\begin{equation}
\label{eq:modWAB3}
\tree{a_2,a_1,b_1,b_2}+\tree{a_1,a_3,b_3,b_1}-\tree{a_3,a_2,b_2,b_3}\in
 [W(ab^2), W(a^2b)].
\end{equation}

Then, by the AS relation, the sum of \eqref{eq:modWAB2} and \eqref{eq:modWAB3}, gives us that
$ 2 \tree{a_1,a_3,b_3,b_1}$ belongs to 
$[W(ab^2), W(a^2b)]$, and exchanging $3$ and $2$ we obtain that
\begin{equation}
\label{eq:modWAB4}
2 \tree{a_1,a_2,b_2,b_1}\in [W(ab^2), W(a^2b)].
\end{equation}

Finally, the sum of \eqref{eq:modWAB2} and \eqref{eq:modWAB4} is the element \eqref{eq:modWAB1}. Therefore,
\[
\Bigg[\tree{a_1,a_2,a_3},\tree{b_{1},b_{2},b_{3}}\Bigg]\in [W(ab^2), W(a^2b)].
\]

\textbf{Two contractions.} If there are exactly $2$ contractions between $\tree{a_i,a_j,a_k}$ and $\tree{b_{i'},b_{j'},b_{k'}}$, we are in the $\mathfrak{S}_g$-orbit the element:
\[
\Bigg[\tree{a_1,a_2,a_3},\tree{b_{1},b_{2},b_{4}}\Bigg].
\]
Next we show that this element belongs to $[W(ab^2), W(a^2b)]$.
Consider the element $\Bigg[\tree{a_1,a_2,a_3},\tree{b_{1},b_{2},b_{3}}\Bigg]$ which is in $[W(ab^2), W(a^2b)]$ by the above computations. Taking the action by $G\in GL_g(\mathbb{Z})$ that sends $b_3$ to $b_3+b_4$ and $a_4$ to $a_4-a_3$ we have that
\[
G\Bigg(\Bigg[\tree{a_1,a_2,a_3},\tree{b_{1},b_{2},b_{3}}\Bigg]\Bigg)= \Bigg[\tree{a_1,a_2,a_3},\tree{b_{1},b_{2},b_{3}}\Bigg]
+\Bigg[\tree{a_1,a_2,a_3},\tree{b_{1},b_{2},b_{4}}\Bigg].
\]

Since the action by $GL_g(\mathbb{Z})$ preserves the module $[W(ab^2), W(a^2b)]$, we have that the element $\Bigg[\tree{a_1,a_2,a_3},\tree{b_{1},b_{2},b_{4}}\Bigg]$ belongs to $[W(ab^2), W(a^2b)]$.

\textbf{One contraction.} If there is a single contraction between $\tree{a_i,a_j,a_k}$ and $\tree{b_{i'},b_{j'},b_{k'}}$, we are in the $\mathfrak{S}_g$-orbit of the element:
\[
\Bigg[\tree{a_1,a_2,a_3},\tree{b_{1},b_{4},b_{5}}\Bigg]=-\Bigg[\tree{b_1,a_2,a_3},\tree{a_{1},b_{4},b_{5}}\Bigg]\in [W(ab^2), W(a^2b)].
\]
\end{proof}

\begin{lemma}
\label{lema:[A,ABB]}
For any integer $g\geq 3$, the module $[W(a^3), W(ab^2)]$ modulo $[W(a^2b), W(a^2b)]$ and the module $[W(b^3), W(a^2b)]$ modulo $[W(ab^2), W(ab^2)]$ are respectively generated by:
\[
2\tree{a_i,a_1,b_{1},a_j}\quad\text{and}\quad 2\tree{b_i,a_1,b_{1},b_j}.
\]
\end{lemma}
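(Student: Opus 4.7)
My strategy parallels the proof of Lemma~\ref{lema:[A,B]}. By bilinearity, the module $[W(a^3), W(ab^2)]$ is $\mathbb{Z}$-spanned by elementary Lie brackets of the form $[\tree{a_i, a_j, a_k}, \tree{a_p, b_q, b_r}]$. Each nonzero summand of such a bracket contracts an $a$-leaf of the first tree with a $b$-leaf of the second whose indices agree, so the number of contractions is $|\{i,j,k\}\cap\{q,r\}| \in \{0, 1, 2\}$. I would argue by case analysis on this number, using throughout the $\mathfrak{S}_g$-action (a subgroup of $GL_g(\mathbb{Z})$) to reduce each bracket to a canonical form involving the distinguished index $1$, and working modulo the $GL_g(\mathbb{Z})$-submodule $[W(a^2b), W(a^2b)]$.

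For the two-contraction case, a representative bracket after $\mathfrak{S}_g$-reduction is $[\tree{a_1, a_m, a_i}, \tree{a_j, b_m, b_1}]$ with $1, m, i, j$ pairwise distinct. Its expansion yields two H-trees: the $a_m \leftrightarrow b_m$ contraction produces a tree whose inner vertices carry the leaves $\{a_1, a_i\}$ and $\{a_j, b_1\}$ respectively, which is of the target shape $\tree{a_?, a_1, b_1, a_?}$ (with $a_1$ and $b_1$ on the two inner vertices adjacent to the central edge); the $a_1 \leftrightarrow b_1$ contraction produces a tree of identical combinatorial shape but with the labels $1$ and $m$ interchanged. Applying the permutation swapping these two indices identifies the latter term with another target generator, so both terms are target-like. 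The factor of $2$ in $2\tree{a_i, a_1, b_1, a_j}$ then arises from combining this bracket with a companion bracket (obtained by swapping labels via AS or by applying IHX) that reproduces the same target tree, mimicking the derivation of equations~\eqref{eq:modWAB2}--\eqref{eq:modWAB4} in the proof of Lemma~\ref{lema:[A,B]}.

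For the one-contraction case, the bracket is a single H-tree whose four labels are the survivors after the $(a_m, b_m)$-contraction. Up to the $\mathfrak{S}_g$-action I may assume $m \neq 1$, and I would distinguish subcases according to whether $a_1$, $b_1$, both, or neither appears among the surviving labels. When both $a_1$ and $b_1$ survive, the tree is already of target form up to AS; in the remaining subcases I would exhibit explicit brackets of $W(a^2b)$-trees producing the same H-tree, thereby showing that it vanishes in the quotient modulo $[W(a^2b), W(a^2b)]$. This mimics the one- and two-contraction subcases of Lemma~\ref{lema:[A,B]}. The analogous second assertion, for $[W(b^3), W(a^2b)]$ modulo $[W(ab^2), W(ab^2)]$, follows by the symmetry of the Tree Lie algebra under exchanging the Lagrangian basis elements $a_i \leftrightarrow b_i$.

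The main obstacle will be constructing the right explicit $[W(a^2b), W(a^2b)]$-brackets that cancel the ``non-$1$'' H-tree terms, and carefully arranging an IHX/AS combination to produce a clean factor of $2$ on a single target generator rather than a sum of several. This is a delicate combinatorial exercise of the same flavor as --- but somewhat more involved than --- the computations in the proof of Lemma~\ref{lema:[A,B]}.
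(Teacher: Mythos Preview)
Your overall strategy---case analysis on the number of contractions, normalized via the $\mathfrak{S}_g$-action---is exactly the paper's approach, and your two-contraction case is on target: the bracket yields two H-trees of the target shape (one for each contracted index), and the crux is to show their difference lies in $[W(a^2b),W(a^2b)]$ so that the sum collapses to a doubled target tree. The paper carries this out via an explicit two-bracket combination plus one IHX step; your reference to ``a companion bracket'' slightly underestimates the bookkeeping, but the plan is sound. (Minor point: in your normal form $[\tree{a_1,a_m,a_i},\tree{a_j,b_m,b_1}]$ the index $j$ need not be distinct from $1,m,i$; the paper leaves the analogous index unrestricted.)

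Your one-contraction case, however, is overcomplicated and contains an error. The paper disposes of this entire case in a single line: if the unique contraction is between the leaf $a_m$ of the first tree and $b_m$ of the second, then swapping those two labels rewrites the bracket, up to sign, as one between two $W(a^2b)$-trees, so it already lies in $[W(a^2b),W(a^2b)]$ and is zero in the quotient. No subcase analysis on whether $a_1$ or $b_1$ ``survive'' is needed. Moreover, your claim that when both survive ``the tree is already of target form up to AS'' is incorrect: the only way $a_1$ and $b_1$ can both survive a \emph{single} contraction is if both are leaves of the second tree (if $a_1$ sat in the first tree and $b_1$ in the second, that would force a second contraction), and then in the resulting H-tree they are attached to the \emph{same} trivalent vertex---whereas in the target shape $\tree{a_i,a_1,b_1,a_j}$ they sit on different ones. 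Even setting this aside, a single undoubled tree cannot lie in the submodule generated by the doubled elements $2\tree{a_i,a_1,b_1,a_j}$, so that subcase could not be handled as you describe.
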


\begin{proof}
We prove the statement for the module $[W(a^3), W(ab^2)]$. For the other module the proof is analogous.
Given an integer $g\geq 3$, the module $[W(a^3), W(ab^2)]$ is generated by
the brackets of the form $\Bigg[\tree{a_i,a_j,a_k},\tree{a_{i'},b_{j'},b_{k'}}\Bigg]$ with $1\leq i,i',j,j',k,k'\leq g$.
We show that all these brackets belong to $[W(ab^2), W(a^2b)]$.
We analyse these brackets in terms of the number of contractions between the tree $\tree{a_i,a_j,a_k}$ and $\tree{a_{i'},b_{j'},b_{k'}}$.
If there are no contractions between these two trees, their bracket is zero.

\textbf{One contraction.} If there is a single contraction between $\tree{a_i,a_j,a_k}$ and $\tree{a_{i'},b_{j'},b_{k'}}$, we are in the $\mathfrak{S}_g$-orbit of the element:
\[
\Bigg[\tree{a_1,a_2,a_3},\tree{a_i,b_{1},b_{j}}\Bigg]=-\Bigg[\tree{b_1,a_2,a_3},\tree{a_i,a_{1},b_{j}}\Bigg]\in [W(a^2b), W(a^2b)],
\]
with $1\leq i,j,\leq g$ such that $i\neq 1$ and $j \neq 1,2,3$.

\textbf{Two contractions.} If there are exactly $2$ contractions between $\tree{a_i,a_j,a_k}$ and $\tree{a_{i'},b_{j'},b_{k'}}$, we are in the $\mathfrak{S}_g$-orbit of the element:
\begin{equation}
\label{eq:2-cont-A-ABB}
\Bigg[\tree{a_1,a_2,a_3},\tree{a_{i},b_{2},b_{1}}\Bigg]= \tree{a_3,a_1,b_1,a_i}+\tree{a_3,a_2,b_2,a_i}, \quad \text{with }1\leq i\leq g.
\end{equation}

Consider the element of $[W(a^2b), W(a^2b)]$,
\begin{equation}
\label{eq:eq:2-cont-A-ABB-2}
\begin{aligned}
\Bigg[\tree{b_1,a_1,a_2},\tree{b_2,a_i,a_3}\Bigg]-\Bigg[\tree{b_1,a_2,a_3},\tree{a_{i},b_{2},a_{1}}\Bigg]
= & \;\tree{b_1,a_1,a_i,a_3}-\tree{a_3,b_1,a_1,a_i}+\tree{a_2,a_3,a_i,b_2} \\
= & \; -\tree{a_3,a_1,b_1,a_i}+\tree{a_3,a_2,b_2,a_i},
\end{aligned}
\end{equation}
where in the second equality we used the IHX relation in first tree and the AS relation in the third tree.
Adding the element \eqref{eq:eq:2-cont-A-ABB-2} to \eqref{eq:2-cont-A-ABB} we get that, modulo $[W(a^2b), W(a^2b)]$,
\[
\Bigg[\tree{a_1,a_2,a_3},\tree{a_{i},b_{2},b_{1}}\Bigg]= 2\tree{a_3,a_1,b_1,a_i}.
\]
\end{proof}

\begin{lemma}
\label{lema:AABB-elements}
For any integer $g\geq 4$, the following trees belong to $[W(a^2b), W(ab^2)]$.
\[
2\tree{b_2,a_1,b_{1},a_4} \quad\text{and}\quad 2\tree{a_4,a_1,b_{1},b_2}.
\]
\end{lemma}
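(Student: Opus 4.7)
The strategy follows the pattern of Lemma 3.3: I will construct explicit Lie brackets of Y-trees in $[W(a^2b), W(ab^2)]$, and then reduce the resulting sums using the AS and IHX relations. The plan is to produce each generator as a combination of two (or three) such brackets, with unwanted contributions canceled by IHX and sign-doubling contributions engineered by a carefully chosen auxiliary bracket.

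For $2\tree{b_2,a_1,b_{1},a_4}$, I will start from the bracket
\[
B_1 = \Bigl[\tree{a_3,a_1,b_2},\,\tree{a_4,b_1,b_3}\Bigr]\in[W(a^2b),\,W(ab^2)].
\]
Checking $\omega$-pairings, this bracket has exactly two contractions, namely $a_3\leftrightarrow b_3$ and $a_1\leftrightarrow b_1$, while all other cross-pairs $(b_2,a_4), (b_2,b_1), (a_4,b_2)$, etc.\ vanish. The first contraction yields an H-tree with leaves $\{a_4,b_2,a_1,b_1\}$ (the target labeling), while the second yields an extraneous H-tree with leaves $\{a_3,b_2,a_4,b_3\}$. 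To double the target contribution and cancel the extraneous one, I will add
\[
B_2 = \Bigl[\tree{a_3,a_4,b_2},\,\tree{a_1,b_1,b_3}\Bigr]\in[W(a^2b),\,W(ab^2)],
\]
which, by the same $\omega$-analysis, admits a single contraction ($a_3\leftrightarrow b_3$) and therefore produces exactly one H-tree with the target leaf set. Applying the IHX relation $\tree{a,b,c,d}=\tree{a,c,b,d}+\tree{c,b,a,d}$ at the middle edge together with the AS relations will allow me to rewrite every summand in a canonical form $\tree{b_2,a_1,b_{1},a_4}$; a third bracket may then be added to absorb whatever remnant survives from the $\{a_3,b_2,a_4,b_3\}$ tree, after which the signed combination reduces to $2\tree{b_2,a_1,b_{1},a_4}$.

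For $2\tree{a_4,a_1,b_{1},b_2}$, the argument is completely symmetric, obtained by interchanging the roles of the $a$- and $b$-labeled leaves in the Y-trees constructed above: starting from the bracket $\bigl[\tree{a_3,b_1,a_4},\,\tree{b_2,a_1,b_3}\bigr]$ (suitably reordered to lie in $[W(a^2b),W(ab^2)]$ via AS) and combining with its single-contraction analogue, the same three-step construction produces the second generator.

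The main technical obstacle is sign bookkeeping: each $\omega$-pairing contributes $\pm 1$ according to whether we contract $a_i$ with $b_i$ or $b_i$ with $a_i$; each AS application flips a sign; and each IHX expansion splits one H-tree into two with a prescribed relative orientation at the middle edge. Orchestrating these so that the extraneous H-tree with leaves $\{a_3,b_2,a_4,b_3\}$ exactly cancels, while the target contribution exactly doubles, is the delicate step. The hypothesis $g\geq 4$ is used to ensure that four distinct indices $\{1,2,3,4\}$ are available so that the auxiliary Y-trees admit only the intended contractions and no others.
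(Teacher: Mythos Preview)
Your approach is the same as the paper's in spirit, and in fact your bracket $B_1$ is literally (via AS) the paper's first bracket $\bigl[\tree{b_2,a_1,a_3},\tree{b_3,b_1,a_4}\bigr]$. However, the proposal is not a proof: you explicitly leave the ``delicate step'' open, write that ``a third bracket may then be added to absorb whatever remnant survives'', and never specify that bracket or verify the cancellation. Since the entire content of the lemma is this explicit computation, what you have written is an outline with the main step missing. Your $B_2$ yields $\tree{b_2,a_4,b_1,a_1}$, which is \emph{not} equal to $\tree{b_2,a_1,b_1,a_4}$ modulo AS alone; pushing it through IHX produces further terms (including a copy of the second target tree $\tree{a_4,a_1,b_1,b_2}$), so the bookkeeping you defer is genuinely nontrivial and not obviously completable with a single extra bracket.

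The paper avoids all of this by choosing three brackets of a uniform shape,
\[
\Bigl[\tree{b_2,a_1,a_3},\tree{b_3,b_1,a_4}\Bigr]
+\Bigl[\tree{b_2,a_1,a_4},\tree{b_4,b_1,a_4}\Bigr]
-\Bigl[\tree{b_2,a_3,a_4},\tree{b_4,b_3,a_4}\Bigr],
\]
each of which has exactly two contractions producing terms $\tree{b_2,a_k,b_k,a_4}$; the six resulting $H$-trees cancel telescopically to $2\,\tree{b_2,a_1,b_1,a_4}$ with no IHX needed. For the second generator, the paper does not redo the construction symmetrically as you propose; instead it applies IHX once to $2\,\tree{b_2,a_1,b_1,a_4}$ and identifies one of the two resulting summands as a single bracket $\bigl[\tree{b_1,a_1,a_4},\tree{b_4,b_2,a_4}\bigr]\in[W(a^2b),W(ab^2)]$, which immediately gives $2\,\tree{a_4,a_1,b_1,b_2}\in[W(a^2b),W(ab^2)]$.
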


\begin{proof}
We first prove that the first tree of the statement belongs to $[W(a^2b), W(ab^2)]$. Consider the element of $[W(a^2b), W(ab^2)]$,
\[
\Bigg[\tree{b_2,a_1,a_3},\tree{b_3,b_1,a_4}\Bigg]+\Bigg[\tree{b_2,a_1,a_4},\tree{b_4,b_1,a_4}\Bigg] -\Bigg[\tree{b_2,a_3,a_4},\tree{b_4,b_3,a_4}\Bigg].
\]
This element is equal to:
\[
\tree{b_2,a_1,b_{1},a_4}+\tree{b_2,a_3,b_{3},a_4}
+\tree{b_2,a_1,b_{1},a_4}+\tree{b_2,a_4,b_{4},a_4}
-\tree{b_2,a_3,b_{3},a_4}-\tree{b_2,a_4,b_{4},a_4}
=2\tree{b_2,a_1,b_{1},a_4}.
\]
Therefore,
\[
2\tree{b_2,a_1,b_{1},a_4}\in [W(a^2b), W(ab^2)].
\]

To prove that the second tree of the statement also belongs to $[W(a^2b), W(ab^2)]$ we apply the IHX relation on the first tree of the statement, which
we already know that belongs to $[W(a^2b), W(ab^2)]$, to get that,
\[
2\tree{b_2,a_1,b_{1},a_4}= 2\tree{b_2,b_1,a_{1},a_4}+2\tree{b_1,a_1,b_{2},a_4}
= 2\tree{a_4,a_1,b_{1},b_2}+2\Bigg[\tree{b_1,a_1,a_4},\tree{b_4,b_2,a_4}\Bigg],
\]
where in the last equality we used the AS relation in the first tree.
Therefore,
\[
2\tree{a_4,a_1,b_{1},b_2}\in [W(a^2b), W(ab^2)].
\]

\end{proof}

We now focus on the image of the triple Lie bracket map
\[
[[\;,\;],\;]: (\mathcal{A}_1(H)\wedge \mathcal{A}_1(H)) \otimes \mathcal{A}_1(H)\rightarrow \mathcal{A}_3(H),
\]
to reduce the number of generators of the submodules $\overline{W}(a^hb^{5-h})$ for $3\leq h\leq 5$. The computations for the other submodules are analogous exchanging $a$'s and $b$'s.
Remember that, by Proposition \ref{prop:desc-m(3)},
the submodule $\overline{W}(a^hb^{5-h})$ is generated by 
the image of the triple Lie bracket on the modules
\begin{equation}
\label{eq:modules-color-h}
(W(a^ib^{3-i}) \wedge W(a^jb^{3-j})) \otimes W(a^kb^{3-k})
\quad \text{with }
\begin{array}{c}
i,j,k\in \lbrace 0,1,2,3 \rbrace, \\
h=i+j+k-2.
\end{array}
\end{equation}

Keep in mind that the image of these modules by the triple Lie bracket
are trivially zero unless $W(a^ib^{3-i})$ has $1$ contraction with $W(a^jb^{3-j})$ and $W(a^kb^{3-k})$ has $1$ contractions with $W(a^ib^{3-i}) \wedge W(a^jb^{3-j})$.
Moreover, the action by $GL_g(\mathbb{Z})$ on $H_1(\Sigma_{g,1},\mathbb{Z})$ induces an action on each module $(W(a^ib^{3-i}) \wedge W(a^jb^{3-j})) \otimes W(a^kb^{3-k})$.
\vspace{0.3cm}

\textbf{The submodule $\overline{W}(a^3b^2)$.}
If we gather all modules of the form \eqref{eq:modules-color-h} such that their image by the triple Lie bracket is contained in $\overline{W}(a^3b^2)$, we get that this module is generated by the image of the triple Lie bracket $[[\;,\;],\;]$ on the following modules:

\[
\begin{array}{ll}
 (W(a^3)\wedge W(a^2b))\otimes W(b^3), & (W(a^2b)\wedge W(a^2b))\otimes W(ab^2), \\[1ex]
 (W(a^3)\wedge W(ab^2))\otimes W(ab^2), & (W(a^2b)\wedge W(ab^2))\otimes W(a^2b), \\[1ex]
 (W(a^3)\wedge W(b^3)) \otimes W(a^2b), & (W(a^2b)\wedge W(b^3))\otimes W(a^3), \\[1ex]
 \multicolumn{2}{c}{(W(ab^2)\wedge W(ab^2))\otimes W(a^3).}
\end{array}
\]

We can in fact reduce the number of modules to consider.
By the Jacobi identity, the image of the module $(W(a^2b)\wedge W(b^3))\otimes W(a^3)$ by the triple Lie bracket is contained in the sum of the images of the modules $(W(a^3)\wedge W(a^2b))\otimes W(b^3)$ and $(W(a^3)\wedge W(b^3)) \otimes W(a^2b)$.
By Lemma \ref{lema:[A,B]} and the Jacobi identity, the image of the modules $(W(a^3)\wedge W(b^3)) \otimes W(a^2b)$ and $(W(a^2b)\wedge W(a^2b))\otimes W(ab^2)$ are contained in the image of $(W(a^2b)\wedge W(ab^2))\otimes W(a^2b)$. Again, by the Jacobi identity, the image of the module $(W(ab^2)\wedge W(ab^2))\otimes W(a^3)$ is contained in the image of the module $(W(a^3)\wedge W(ab^2))\otimes W(ab^2)$. Therefore the module $\overline{W}(a^3b^2)$ is generated by the image of the triple Lie bracket on the following modules:
\begin{equation}
\label{eq:list-mod-a3b2}
\begin{array}{ll}
 (W(a^3)\wedge W(a^2b))\otimes W(b^3), & (W(a^3)\wedge W(ab^2))\otimes W(ab^2), \\[1ex]
  \multicolumn{2}{c}{(W(a^2b)\wedge W(ab^2))\otimes W(a^2b).}
\end{array}
\end{equation}

As a preliminary result we show:

\begin{lemma}
\label{lema:trees-a4b2-needed}
For any integer $g\geq 5$ the following trees belong to $\Gamma_3(W(a^{\geq 1}b^{\geq 1}))$.
\[
\begin{array}{l}
\textit{i)} \tree{a_3,a_2,b_1,a_1,b_i}
\qquad \textit{ii)} \tree{a_3,a_2,a_1,b_1,b_i}
\qquad \text{for } i\neq 2,3, \\
\textit{iii)} \tree{a_2,a_3,a_1,b_1,b_3}-\tree{a_2,a_4,a_1,b_1,b_4}.
\end{array}
\]
\end{lemma}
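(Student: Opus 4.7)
The plan is to realise each tree in (i)--(iii) as an explicit triple Lie bracket $[[T_1,T_2],T_3]$ with each $T_\ell \in W(a^{\geq 1}b^{\geq 1}) = W(a^2b)\oplus W(ab^2)$, modulo ``unwanted'' residual contractions that are then eliminated. Since the target trees in (i) and (ii) are caterpillars with three trivalent vertices, the most natural construction is to cut each along its two internal edges, producing three Y-trees $Y_L$, $Y_M$, $Y_R$, one per trivalent vertex. The four resulting dangling half-edges are labelled with fresh indices $j,k \in \{4,\dots,g\}\setminus\{i\}$ so that the pairs $(b_j,a_j)$ and $(a_k,b_k)$ are dual under the symplectic form $\omega$, and will provide the two intended contractions. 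Crucially, the colours on the half-edges are chosen so that each of $Y_L$, $Y_M$, $Y_R$ carries both an $a$- and a $b$-label, thus landing in $W(a^{\geq 1}b^{\geq 1})$; the hypothesis $g \geq 5$ gives enough room to pick $j$ and $k$.

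For part (i) the natural choice is $Y_L = \tree{a_3,a_2,b_j}$, $Y_M = \tree{a_j,b_1,a_k}$, $Y_R = \tree{b_k,a_1,b_i}$; for part (ii), $Y_L = \tree{a_3,a_2,b_j}$, $Y_M = \tree{a_j,a_1,b_k}$, $Y_R = \tree{a_k,b_1,b_i}$. In both cases the intended contractions reproduce the target caterpillar, but the outer bracket additionally picks up a dual pair $(b_1,a_1)$, leaving an unwanted residue $C$ which is itself a caterpillar carrying the fresh trace pair $(a_k,b_k)$ on adjacent vertices. I would eliminate $C$ either by setting up a second triple bracket with a cyclically permuted assignment of the fresh indices whose residue matches $C$ with the opposite sign, or by invoking the Jacobi identity $[[X,Y],Z] + [[Z,X],Y] + [[Y,Z],X] = 0$ on the same three Y-trees to produce a compensating expression; alternatively, the residue $C$ itself reduces via the IHX and AS relations to a tree of the form appearing in part (iii) and is then handled by that part.

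For part (iii), the explicitly antisymmetric combination $\tree{a_2,a_3,a_1,b_1,b_3}-\tree{a_2,a_4,a_1,b_1,b_4}$ suggests exploiting the $GL_g(\mathbb{Z})$-invariance of $\Gamma_3(W(a^{\geq 1}b^{\geq 1}))$, which is inherited from the $GL_g(\mathbb{Z})$-stability of $W(a^{\geq 1}b^{\geq 1})$. I would start from a triple bracket $B$ whose expansion contains $\tree{a_2,a_3,a_1,b_1,b_3}$ (together with auxiliary terms depending on the index $4$ only through the $GL_g(\mathbb{Z})$-action), apply the elementary matrix $G \in GL_g(\mathbb{Z})$ sending $a_3 \mapsto a_3 + a_4$ (and correspondingly $b_4 \mapsto b_4 - b_3$), and form $G(B) - B$. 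The two individual summands need not lie separately in $\Gamma_3(W(a^{\geq 1}b^{\geq 1}))$, but their antisymmetric difference emerges as the only contribution surviving after cancellation of the $G$-equivariant auxiliary terms.

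The main obstacle will be the bookkeeping of residual contractions and the signs arising from $\omega$: each triple bracket yields, beyond the intended caterpillar, several ``trace-like'' terms that must be shown to cancel pairwise or reduce to trees already on the list (i)--(iii). Setting up the minimal combination of brackets whose joint expansion isolates exactly the desired elements, and making the three statements close on themselves (with (iii) most plausibly playing the role of the auxiliary input that kills the fresh-index residues produced in (i) and (ii)), is the technical heart of the argument.
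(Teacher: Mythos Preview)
Your general strategy is sound, but the specific execution has a structural gap that the paper resolves with a trick you do not mention.

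Compute your bracket for (i) explicitly. With $Y_L=\tree{a_3,a_2,b_j}$, $Y_M=\tree{a_j,b_1,a_k}$, $Y_R=\tree{b_k,a_1,b_i}$ and $j,k$ fresh, one gets
\[
[[Y_L,Y_M],Y_R]=-\tree{a_3,a_2,b_1,a_1,b_i}+\tree{a_3,a_2,a_k,b_k,b_i}.
\]
The residue is a tree of type (ii) (with index $k$ in place of $1$), not of type (i). Symmetrically, your bracket for (ii) produces a residue of type (i). So neither bracket telescopes against itself under a ``cyclic permutation of fresh indices'': the two output terms always have \emph{different} shapes. Your proposed fallback, feeding the residues into (iii), does not close either: the difference in (iii) involves trees with $\delta=\beta$ in the pattern $a_\alpha,a_\beta,a_\gamma,b_\gamma,b_\delta$, which is exactly the case \emph{excluded} from (ii), and to obtain (iii) by a $GL_g$-action you need as input a tree already known to lie in $\Gamma_3$. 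In the paper that input is a tree of type (ii), so the logical order is (i) $\Rightarrow$ (ii) $\Rightarrow$ (iii); your suggested order (iii) $\Rightarrow$ (i),(ii) is circular unless you can produce (iii) independently, and your sketch for (iii) does not do so.

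The paper's key idea for (i) is different from your decomposition: instead of using purely fresh indices, it takes Y-trees with a \emph{built-in trace pair}, e.g.\ $\tree{a_2,b_j,a_j}$. The resulting triple bracket
\[
\Bigg[\Bigg[\tree{a_2,b_j,a_j},\tree{b_j,a_3,b_k}\Bigg],\tree{a_k,a_j,b_i}\Bigg]
=-\tree{a_2,b_j,a_3,a_j,b_i}+\tree{a_3,b_k,a_2,a_k,b_i}
\]
has two terms of the \emph{same} shape (related by swapping $(j,2)\leftrightarrow(k,3)$), and a cyclic sum over $(j,k)\in\{(1,4),(4,5),(5,1)\}$ telescopes to $-\tree{a_2,b_1,a_3,a_1,b_i}+\tree{a_3,b_1,a_2,a_1,b_i}$, which is the target tree (i) by one IHX. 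With (i) in hand, (ii) follows by IHX plus a single residue-free bracket, and (iii) follows from (ii) by the $GL_g$-action you describe. The telescoping device is the missing ingredient in your plan.
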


\begin{proof}
\textbf{i)} Let $i,j,k\in \mathbb{N}$ such that $i,j,k\notin \lbrace 2,3\rbrace$ with $j\neq k$. Consider the element of $\Gamma_3(W(a^{\geq 1}b^{\geq 1}))$,
\[
\Bigg[\Bigg[\tree{a_2,b_j,a_j},\tree{b_j,a_3,b_k}\Bigg],\tree{a_k,a_j,b_i}\Bigg]=\Bigg[\tree{a_2,b_j,a_3,b_k},\tree{a_k,a_j,b_i}\Bigg]= -\tree{a_2,b_j,a_3,a_j,b_i}+\tree{a_3,b_k,a_2,a_k,b_i}.
\]

Then the following sum of elements of $\Gamma_3(W(a^{\geq 1}b^{\geq 1}))$,
\[
\Bigg[\tree{a_2,b_1,a_3,b_4},\tree{a_4,a_1,b_i}\Bigg]+\Bigg[\tree{a_3,b_4,a_2,b_5},\tree{a_5,a_4,b_i}\Bigg]
+ \Bigg[\tree{a_2,b_5,a_3,b_1},\tree{a_1,a_5,b_i}\Bigg],
\]
is equal to the sum:
\[
-\tree{a_2,b_1,a_3,a_1,b_i}+\tree{a_3,b_4,a_2,a_4,b_i}
-\tree{a_3,b_4,a_2,a_4,b_i}+\tree{a_2,b_5,a_3,a_5,b_i}
-\tree{a_2,b_5,a_3,a_5,b_i}+\tree{a_3,b_1,a_2,a_1,b_i},
\]
which in turn is equal to
\begin{equation}
-\tree{a_2,b_1,a_3,a_1,b_i}+\tree{a_3,b_1,a_2,a_1,b_i}.
\end{equation}
By the IHX relation we have that this last element is equal to:
\begin{equation*}
\tree{a_3,a_2,b_1,a_1,b_i},
\end{equation*}
which indeed belongs to $\Gamma_3(W(a^{\geq 1}b^{\geq 1}))$ for $i\neq 2,3$.
\vspace{0.5cm}

\textbf{ii)}  For $i\neq 2,3$, by the IHX relation, we have the following equality:
\[
\tree{a_3,a_2,a_1,b_1,b_i}= \tree{a_3,a_2,b_1,a_1,b_i}+ \tree{a_3,a_2,b_i,b_1,a_1},
\]
where
\[
\tree{a_3,a_2,b_i,b_1,a_1} =\Bigg[\tree{a_3,a_2,b_i,a_l},\tree{b_l,b_1,a_1}\Bigg]=-\Bigg[\Bigg[\tree{a_3,a_2,b_1},\tree{a_1,b_i,a_l}\Bigg],\tree{b_l,b_1,a_1}\Bigg]\quad \text{with } l\neq 1,2,3. 
\]
Therefore, by point $i)$ we get that,
\[
\tree{a_3,a_2,a_1,b_1,b_i}\in \Gamma_3(W(a^{\geq 1}b^{\geq 1})),
\qquad \text{for } i\neq 2,3.
\]
\vspace{0.5cm}

\textbf{iii)} Consider the element $\tree{a_2,a_4,a_1,b_1,b_3}$, which belongs to the subgroup $\Gamma_3(W(a^{\geq 1}b^{\geq 1}))$ by point $ii)$.
Act by $G\in GL_g(\mathbb{Z})$, which sends $a_4$ to $a_4+a_3$ and $b_3$ to $b_3-b_4$.
We compute:
\[
G\Bigg(\tree{a_2,a_4,a_1,b_1,b_3}\Bigg)= \tree{a_2,a_4,a_1,b_1,b_3}+ \tree{a_2,a_3,a_1,b_1,b_3} -\tree{a_2,a_4,a_1,b_1,b_4}- \tree{a_2,a_3,a_1,b_1,b_4}.
\]
Then, by point $ii)$, we get that 
\[\tree{a_2,a_3,a_1,b_1,b_3}-\tree{a_2,a_4,a_1,b_1,b_4}\in \Gamma_3(W(a^{\geq 1}b^{\geq 1})).\]
\end{proof}

We now show that the image of the modules $(W(a^3)\wedge W(a^2b))\otimes W(b^3)$ and $(W(a^3)\wedge W(ab^2))\otimes W(ab^2)$ by the triple Lie bracket are contained in the image of the module $(W(a^2b)\wedge W(ab^2))\otimes W(a^2b)$. As a consequence, we get that the submodule $\overline{W}(a^3b^2)$ is contained in $\Gamma_3(W(a^{\geq 1}b^{\geq 1}))$.

\begin{lemma}
\label{lema:[[A,AB],B]}
For any integer $g\geq 5$ we have the following inclusion of modules:
\[
[[W(a^3), W(a^2b)], W(b^3)]\subset \Gamma_3(W(a^{\geq 1}b^{\geq 1})).
\]
\end{lemma}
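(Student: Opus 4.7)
The approach is to apply the Jacobi identity
\[
[[X,Y],Z]=[[X,Z],Y]+[X,[Y,Z]]
\]
to a generic generator with $X\in W(a^3)$, $Y\in W(a^2b)$ and $Z\in W(b^3)$, and show that each summand lies in $\Gamma_3(W(a^{\geq 1}b^{\geq 1}))$. The first summand $[[X,Z],Y]$ is immediate from Lemma \ref{lema:[A,B]}: writing $[X,Z]=\sum_k[P_k,Q_k]$ with $P_k\in W(ab^2)$ and $Q_k\in W(a^2b)$ expresses it as $\sum_k[[P_k,Q_k],Y]$, a sum of triple brackets of elements of $W(a^{\geq 1}b^{\geq 1})$.

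For the second summand $[X,[Y,Z]]$, the obstruction is that $X\in W(a^3)\not\subset W(a^{\geq 1}b^{\geq 1})$, so it cannot directly be interpreted as a triple bracket of $W(a^{\geq 1}b^{\geq 1})$-elements. The plan is first to rewrite $[Y,Z]\in W_2(ab^3)$ as a bracket of two $W(ab^2)$-elements, and then apply Jacobi once more. The key identity, verified by a direct pairing count, is
\[
\tree{a_i,b_j,b_k,b_l}=-\Bigg[\tree{a_i,b_j,b_p},\tree{a_p,b_k,b_l}\Bigg],
\]
valid for any auxiliary $p\notin\{i,j,k,l\}$, which exists because $g\geq 5$; indeed, for such $p$ the only nonzero contraction on the right-hand side is $b_p\leftrightarrow a_p$, with sign $-1$. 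This shows $W_2(ab^3)\subset[W(ab^2),W(ab^2)]$, so in particular $[Y,Z]=\sum_s[R_s,S_s]$ with $R_s,S_s\in W(ab^2)$.

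A second application of Jacobi yields $[X,[R_s,S_s]]=[[X,R_s],S_s]-[[X,S_s],R_s]$. The same technique provides the analogous identity
\[
\tree{a_i,a_j,a_k,b_l}=-\Bigg[\tree{a_i,a_j,b_p},\tree{a_p,a_k,b_l}\Bigg],\qquad p\notin\{i,j,k,l\},
\]
hence $W_2(a^3b)\subset[W(a^2b),W(a^2b)]$. Consequently both $[X,R_s]$ and $[X,S_s]$, which lie in $W_2(a^3b)$, decompose as $\sum[A_m,B_m]$ with $A_m,B_m\in W(a^2b)$, and therefore the triple brackets $[[X,R_s],S_s]$ and $[[X,S_s],R_s]$ are sums of triple brackets of elements of $W(a^{\geq 1}b^{\geq 1})$, completing the reduction.

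The main technical obstacle is the verification of the two ``degree-two tree equals single Lie bracket'' identities above: one must locate the remaining leaves in the 4-leaf tripod notation after the unique nonzero contraction in order to recover the target tree up to sign, and one must separately dispatch the degenerate cases in which some of $i,j,k,l$ coincide, by the same kind of IHX and AS manipulations used in the three-/two-/one-contraction case analysis of Lemma \ref{lema:[A,B]}.
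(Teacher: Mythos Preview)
Your Jacobi-based strategy is appealing, and the first reduction is correct: by Lemma~\ref{lema:[A,B]} the term $[[X,Z],Y]$ lands in $\Gamma_3(W(a^{\geq 1}b^{\geq 1}))$. The gap is in the second reduction. You assert
\[
W_2(ab^3)\subset [W(ab^2),W(ab^2)]\quad\text{and}\quad W_2(a^3b)\subset [W(a^2b),W(a^2b)],
\]
and your single-contraction identity indeed proves this for trees $\tree{a_i,a_j,a_k,b_l}$ with $i,j\neq l$ (respectively $\tree{a_i,b_j,b_k,b_l}$ with $i\neq k,l$). But the ``degenerate'' case, say $\tree{a_l,a_j,a_k,b_l}$, cannot be reached by your trick, and AS/IHX do not reduce it to the generic case: every rewriting keeps the coincidence between the $a$-index on one side and the $b$-index on the other. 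This is exactly the obstruction isolated in Lemma~\ref{lema:[A,ABB]}, which computes that $[W(a^3),W(ab^2)]$ modulo $[W(a^2b),W(a^2b)]$ is generated by the classes $2\tree{a_i,a_1,b_1,a_j}$; the nontrivial proof of Lemma~\ref{lema:[[A,AAB],AAB]} (which brackets precisely these residual generators against $W(ab^2)$) shows they are not expected to vanish. So the sentence ``one must separately dispatch the degenerate cases \dots\ by the same kind of IHX and AS manipulations'' hides the real content of the lemma.

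A salvage is possible but not free: in your step~5 you could invoke Lemma~\ref{lema:[[A,AAB],AAB]} directly (its proof is independent of the present lemma), which handles $[[X,R_s],S_s]$ without needing $W_2(a^3b)\subset[W(a^2b),W(a^2b)]$. But step~3 still requires controlling $[Y,Z]\in[W(a^2b),W(b^3)]$ modulo $[W(ab^2),W(ab^2)]$, and the dual of Lemma~\ref{lema:[A,ABB]} leaves the residual generators $2\tree{b_i,a_1,b_1,b_j}$; bracketing these against $X\in W(a^3)$ is again nontrivial. The paper avoids all of this by a different route: it first observes $[W(a^3),W(a^2b)]=W(a^4)$ (here the single-contraction trick works cleanly, since all leaves are $a$'s), and then analyzes $[W(a^4),W(b^3)]$ by a case split on the number of contractions, using the preparatory identities of Lemma~\ref{lema:trees-a4b2-needed}.
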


\begin{proof}
By construction, $[W(a^3), W(a^2b)]\subset W(a^4)$. Moreover, any tree of $W(a^4)$ can be written as an element of $[W(a^3), W(a^2b)]$ by taking the bracket of the two trees that result of
cutting the tree of $W(a^4)$ in the unique edge that joins the trivalent vertices and adding labels $a_i$ and $b_i$ to each respective created leaves.
Then $[W(a^3), W(a^2b)]= W(a^4)$.
Hence,
\[
[[W(a^3), W(a^2b)], W(b^3)]=[W(a^4), W(b^3)].
\]
The generators of this last module are the $\mathfrak{S}_g$-orbits of the following elements:
\[
\Bigg[\tree{a_i,a_j,a_k,a_l}, \tree{b_{1},b_{2},b_{3}}\Bigg]\quad \text{with }
1\leq i,j,k,l \leq g.
\]

In the sequel, for each number of contractions between the left and right trees of these generators we prove that they belong to $\Gamma_3(W(a^{\geq 1}b^{\geq 1}))$.
\vspace{0.5cm}

\textbf{Three contractions.} If there are exactly $3$ contractions between $\tree{a_i,a_j,a_k,a_l}$ and $\tree{b_{1},b_{2},b_{3}}$, modulo the AS relation, we are in the $\mathfrak{S}_g$-orbits of the elements:
\begin{equation}
\label{eq:lemma-3contract-list}
\Bigg[\tree{a_1,a_2,a_3,a_4},\tree{b_{1},b_{2},b_{3}}\Bigg]
\quad \text{and} \quad \Bigg[\tree{a_1,a_2,a_1,a_4},\tree{b_{1},b_{2},b_{3}}\Bigg].
\end{equation}

We show that the first element of \eqref{eq:lemma-3contract-list} belongs to $\Gamma_3(W(a^{\geq 1}b^{\geq 1}))$. We compute:
\[
\begin{aligned}
 \Bigg[\tree{a_1,a_2,a_3,a_4},\tree{b_{1},b_{2},b_{3}}\Bigg] = & \; \tree{a_4,a_3,a_2,b_2,b_3}+\tree{a_3,a_4,a_1,b_3,b_1} +\tree{a_2,a_1,a_4,b_1,b_2} \\
= & \tree{a_4,a_3,a_2,b_2,b_3}+\tree{a_4,a_3,a_1,b_1,b_3} +\tree{a_4,a_1,a_2,b_1,b_2}+\tree{a_2,a_4,a_1,b_1,b_2} \\ 
= & \; \tree{a_4,a_3,a_2,b_2,b_3}+\tree{a_4,a_3,a_1,b_1,b_3} -\tree{a_4,a_1,a_2,b_2,b_1}-\tree{a_4,a_2,a_1,b_1,b_2},
\end{aligned}
\]
where in the second equality we used the AS relation on the second tree and the IHX relation on the third tree, and in the last equality we used the AS relation on the third and fourth trees.
Then, by Point $iii)$ in Lemma \ref{lema:trees-a4b2-needed}, we have that this element indeed belongs to $\Gamma_3(W(a^{\geq 1}b^{\geq 1}))$.

We now prove that the second element of \eqref{eq:lemma-3contract-list} belongs to $\Gamma_3(W(a^{\geq 1}b^{\geq 1}))$. Consider the first element of \eqref{eq:lemma-3contract-list}, which we already know that belongs to $\Gamma_3(W(a^{\geq 1}b^{\geq 1}))$. Act by $G\in GL_g(\mathbb{Z})$, which sends $a_3$ to $a_3+a_1$ and $b_1$ to $b_1-b_3$. Then we compute:
\[
G\Bigg(\Bigg[\tree{a_1,a_2,a_3,a_4},\tree{b_{1},b_{2},b_{3}}\Bigg]\Bigg) =
\Bigg[\tree{a_1,a_2,a_3,a_4},\tree{b_{1},b_{2},b_{3}}\Bigg]
+\Bigg[\tree{a_1,a_2,a_1,a_4},\tree{b_{1},b_{2},b_{3}}\Bigg],
\]
where we used that, by linearity on the leaves and the AS relation, the action of the element $G$ leaves the second tree of the first bracket invariant.
This shows that the second element of \eqref{eq:lemma-3contract-list} also belongs to $\Gamma_3(W(a^{\geq 1}b^{\geq 1}))$.

\textbf{Two contractions.} If there are exactly $2$ contractions between $\tree{a_i,a_j,a_k,a_l}$ and $\tree{b_{1},b_{2},b_{3}}$, modulo the AS relation, we are in the $\mathfrak{S}_g$-orbits of the elements:
\begin{equation}
\label{eq:lemma-2contract-list}
\Bigg[\tree{a_1,a_2,a_i,a_4},\tree{b_{1},b_{2},b_{3}}\Bigg],\quad
\Bigg[\tree{a_1,a_i,a_3,a_4},\tree{b_{1},b_{2},b_{3}}\Bigg]\quad \text{and} \quad
\Bigg[\tree{a_1,a_i,a_1,a_4},\tree{b_{1},b_{2},b_{3}}\Bigg],
\end{equation}
with $i\notin \lbrace 1,2,3 \rbrace$.

We show that the first two elements of \eqref{eq:lemma-2contract-list} belong to $\Gamma_3(W(a^{\geq 1}b^{\geq 1}))$. Consider the first element of \eqref{eq:lemma-3contract-list}, which belongs to $\Gamma_3(W(a^{\geq 1}b^{\geq 1}))$ by the three contractions case.
For $i\notin \lbrace 1,2,3 \rbrace$, act by $G_1\in GL_g(\mathbb{Z})$, which sends $a_3$ to $a_3+a_i$ and $b_i$ to $b_i-b_3$ and act by $G_2\in GL_g(\mathbb{Z})$, which sends $a_2$ to $a_2+a_i$ and $b_i$ to $b_i-b_2$.
Then we compute:
\[
\begin{aligned}
 G_1\Bigg(\Bigg[\tree{a_1,a_2,a_3,a_4},\tree{b_{1},b_{2},b_{3}}\Bigg]\Bigg) = & \Bigg[\tree{a_1,a_2,a_3,a_4},\tree{b_{1},b_{2},b_{3}}\Bigg] + \Bigg[\tree{a_1,a_2,a_i,a_4},\tree{b_{1},b_{2},b_{3}}\Bigg],\\
G_2\Bigg(\Bigg[\tree{a_1,a_2,a_3,a_4},\tree{b_{1},b_{2},b_{3}}\Bigg]\Bigg)  = & \Bigg[\tree{a_1,a_2,a_3,a_4},\tree{b_{1},b_{2},b_{3}}\Bigg] 
+ \Bigg[\tree{a_1,a_i,a_3,a_4},\tree{b_{1},b_{2},b_{3}}\Bigg],
\end{aligned}
\]
where we used that, by linearity on the leaves and the AS relation, the action of the element $G_1$ and $G_2$ leave the second tree of the first bracket invariant.
This shows that the first two elements of \eqref{eq:lemma-2contract-list} belong to $\Gamma_3(W(a^{\geq 1}b^{\geq 1}))$.

We now prove that the third element of \eqref{eq:lemma-2contract-list} belongs to $\Gamma_3(W(a^{\geq 1}b^{\geq 1}))$. Consider the second element of \eqref{eq:lemma-2contract-list}, which we already know that belongs to $\Gamma_3(W(a^{\geq 1}b^{\geq 1}))$. Act by $G\in GL_g(\mathbb{Z})$, which sends $a_3$ to $a_3+a_1$ and $b_1$ to $b_1-b_3$. Then we compute:
\[
G\Bigg(\Bigg[\tree{a_1,a_i,a_3,a_4},\tree{b_{1},b_{2},b_{3}}\Bigg]\Bigg)=
\Bigg[\tree{a_1,a_i,a_3,a_4},\tree{b_{1},b_{2},b_{3}}\Bigg]+\Bigg[\tree{a_1,a_i,a_1,a_4},\tree{b_{1},b_{2},b_{3}}\Bigg],
\]
where we used that, by linearity on the leaves and the AS relation, the action of the element $G$ leaves the second tree of the first bracket invariant.
This shows that the third element of \eqref{eq:lemma-2contract-list}
also belongs to $\Gamma_3(W(a^{\geq 1}b^{\geq 1}))$.

\textbf{One contraction.} If there is a single contraction between $\tree{a_i,a_j,a_k,a_l}$ and $\tree{b_{1},b_{2},b_{3}}$, modulo the AS relation, we are in the $\mathfrak{S}_g$-orbit of the element:
\[
\Bigg[\tree{a_i,a_j,a_k,a_1},\tree{b_{1},b_{2},b_{3}}\Bigg]=-\Bigg[\tree{a_i,a_j,a_k,b_1},\tree{a_{1},b_{2},b_{3}}\Bigg]
=\Bigg[\Bigg[\tree{a_i,a_j,b_2},\tree{a_2,a_k,b_1}\Bigg],\tree{a_{1},b_{2},b_{3}}\Bigg],
\]
with $i,j,k\notin \lbrace 1,2,3 \rbrace$, which by construction belongs to $\Gamma_3(W(a^{\geq 1}b^{\geq 1}))$.

\textbf{Four contractions.} If there are exactly $4$ contractions between
$\tree{a_i,a_j,a_k,a_l}$ and $\tree{b_{1},b_{2},b_{3}}$, modulo the AS relation, we are in the $\mathfrak{S}_g$-orbits of the elements:
\begin{equation}
\label{eq:lemma-4contract-list}
\Bigg[\tree{a_1,a_2,a_1,a_3},\tree{b_{1},b_{2},b_{3}}\Bigg]
\quad \text{and} \quad
\Bigg[\tree{a_1,a_2,a_1,a_2},\tree{b_{1},b_{2},b_{3}}\Bigg].
\end{equation}

We show that the first element of \eqref{eq:lemma-4contract-list} belongs to $\Gamma_3(W(a^{\geq 1}b^{\geq 1}))$. Consider the first element of \eqref{eq:lemma-3contract-list}, which belongs to $\Gamma_3(W(a^{\geq 1}b^{\geq 1}))$ by the three contractions case. Act by $G\in GL_g(\mathbb{Z})$, which sends $a_4$ to $a_4+a_1$ and $b_1$ to $b_1-b_4$. Then we compute:
\[
\begin{aligned}
G\Bigg(\Bigg[\tree{a_1,a_2,a_4,a_3},\tree{b_{1},b_{2},b_{3}}\Bigg]\Bigg)
= \; & \Bigg[\tree{a_1,a_2,a_4,a_3},\tree{b_{1},b_{2},b_{3}}\Bigg]  
+\Bigg[\tree{a_1,a_2,a_1,a_3},\tree{b_{1},b_{2},b_{3}}\Bigg] \\
& -\Bigg[\tree{a_1,a_2,a_4,a_3},\tree{b_{4},b_{2},b_{3}}\Bigg]
-\Bigg[\tree{a_1,a_2,a_1,a_3},\tree{b_{4},b_{2},b_{3}}\Bigg].
\end{aligned}
\]
Then, by the two and three contractions cases, the first, third and fourth brackets above belong to $\Gamma_3(W(a^{\geq 1}b^{\geq 1}))$ and hence,
\[
\Bigg[\tree{a_1,a_2,a_1,a_3},\tree{b_{1},b_{2},b_{3}}\Bigg]\in \Gamma_3(W(a^{\geq 1}b^{\geq 1})).
\]

Finally, we prove that the second element of \eqref{eq:lemma-4contract-list} belongs to $\Gamma_3(W(a^{\geq 1}b^{\geq 1}))$.
Consider the first element of \eqref{eq:lemma-4contract-list} which we already know that belongs to $\Gamma_3(W(a^{\geq 1}b^{\geq 1}))$. Act by $G\in GL_g(\mathbb{Z})$, which sends $a_3$ to $a_3+a_2$ and $b_2$ to $b_2-b_3$. Then we compute:
\[
G\Bigg(\Bigg[\tree{a_1,a_2,a_1,a_3},\tree{b_{1},b_{2},b_{3}}\Bigg]\Bigg)= \Bigg[\tree{a_1,a_2,a_1,a_3},\tree{b_{1},b_{2},b_{3}}\Bigg]  +\Bigg[\tree{a_1,a_2,a_1,a_2},\tree{b_{1},b_{2},b_{3}}\Bigg],
\]
where we used that, by linearity on the leaves and the AS relation, the action of the element $G$ leaves the second tree of the first bracket invariant.
Therefore we get that the second element of \eqref{eq:lemma-4contract-list} also belongs to $\Gamma_3(W(a^{\geq 1}b^{\geq 1}))$.
\end{proof}

\begin{lemma}
\label{lema:[[A,AAB],AAB]}
For any integer $g\geq 4$ we have the following inclusion of modules:
\[
[[W(a^3), W(ab^2)], W(ab^2)]\subset \Gamma_3(W(a^{\geq 1}b^{\geq 1})).
\]
\end{lemma}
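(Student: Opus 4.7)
The plan is to mimic the strategy of Lemma \ref{lema:[[A,AB],B]}: first compress the inner module $[W(a^3), W(ab^2)]$ to a tractable set of generators using Lemma \ref{lema:[A,ABB]}, and then carry out a case analysis on the outer bracket with $W(ab^2)$ according to the number of nonzero contractions.

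First, I would invoke Lemma \ref{lema:[A,ABB]}, which states that modulo $[W(a^2b), W(a^2b)]$ the module $[W(a^3), W(ab^2)]$ is generated by the H-trees $2\tree{a_i,a_1,b_1,a_j}$. Consequently,
\[
[[W(a^3), W(ab^2)], W(ab^2)] \subseteq \bigl[\langle 2\tree{a_i,a_1,b_1,a_j}\rangle, W(ab^2)\bigr] + \bigl[[W(a^2b), W(a^2b)], W(ab^2)\bigr].
\]
Since $W(a^2b), W(ab^2) \subseteq W(a^{\geq 1}b^{\geq 1})$, the second summand already lies in $\Gamma_3(W(a^{\geq 1}b^{\geq 1}))$, so it remains to analyse brackets of the form $\bigl[\tree{a_i,a_1,b_1,a_j},\tree{a_p,b_q,b_r}\bigr]$ where $i,j\neq 1$.

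Next, following the blueprint of Lemma \ref{lema:[[A,AB],B]}, I would stratify these brackets by the number of nonzero contractions between the H-tree $\tree{a_i,a_1,b_1,a_j}$ and the Y-tree $\tree{a_p,b_q,b_r}$, and within each stratum reduce to a single $\mathfrak{S}_g$-orbit representative. In the generic (one-contraction) case, the desired element may be recognised as a triple bracket in $W(a^{\geq 1}b^{\geq 1})$ after splitting the H-tree at its unique inner edge: each half, once a fresh leaf $b_k$ or $a_k$ is introduced at the cut, becomes an element of $W(a^2b)$, so upon pairing with $\tree{a_p,b_q,b_r}\in W(ab^2)$ one obtains an element of $[[W(a^2b),W(a^2b)],W(ab^2)]\subseteq\Gamma_3(W(a^{\geq 1}b^{\geq 1}))$. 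For the higher-contraction cases, I would act by suitable elementary matrices $G\in GL_g(\mathbb{Z})$ on an already-treated lower-contraction representative, as done in point iii of Lemma \ref{lema:trees-a4b2-needed}, and then apply the AS and IHX relations to isolate the target tree modulo elements already known to lie in $\Gamma_3(W(a^{\geq 1}b^{\geq 1}))$.

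The main obstacle is the case in which the split $[T_1,T_2]$ of the H-tree at its inner edge picks up an extra nonzero contraction (because the leaves $a_1$ and $b_1$ sit on opposite halves), producing an auxiliary H-tree of the same shape but with a fresh-index pair $a_k,b_k$ in place of $a_1,b_1$. I expect to dispose of this correction either by iterating the same splitting with the roles of the fresh and the original indices exchanged and combining the two identities (exploiting that the $GL_g(\mathbb{Z})$-orbit of $\tree{a_i,a_1,b_1,a_j}$ is fixed) or by appealing again to Lemma \ref{lema:[A,ABB]} applied to the degree-$2$ tree obtained after contracting one leaf of the outer $W(ab^2)$-factor, so that the correction is again a triple bracket of elements of $W(a^{\geq 1}b^{\geq 1})$.
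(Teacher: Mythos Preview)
Your overall framework matches the paper's: invoke Lemma~\ref{lema:[A,ABB]} to reduce to brackets $\bigl[2\tree{a_i,a_1,b_1,a_j},\,Y\bigr]$ with $Y\in W(ab^2)$, then stratify by contractions. The splitting identity you describe,
\[
\tree{a_i,a_1,b_1,a_j}\;\equiv\;\tree{a_j,a_l,b_l,a_i}\pmod{\Gamma_2(W(a^{\geq 1}b^{\geq 1}))},
\]
is exactly what the paper uses---but only as a \emph{preliminary} step, to arrange that the index $1$ does not occur in $Y$, so that every remaining contraction is between the outer leaves $a_i,a_j$ of the H-tree and the $b$-leaves of $Y$.

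The genuine gap is your resolution of the correction term. Iterating the split with the fresh and original indices exchanged is circular: you obtain $[\tree{a_i,a_1,b_1,a_j},Y]\equiv[\tree{a_j,a_l,b_l,a_i},Y]\equiv[\tree{a_i,a_1,b_1,a_j},Y]$ modulo $\Gamma_3$, a tautology. Your second fallback (re-applying Lemma~\ref{lema:[A,ABB]} ``to the degree-$2$ tree obtained after contracting one leaf'') is not a well-defined move, since that lemma concerns $[W(a^3),W(ab^2)]$ specifically and contracting a leaf of $Y$ against the H-tree already produces a degree-$3$ tree. What the paper actually does in the one-contraction case is \emph{exchange the contracting leaf pair} ($a_2\leftrightarrow b_2$) to get $-\bigl[2\tree{b_2,a_1,b_1,a_4},\,\tree{a_k,a_2,b_3}\bigr]$ and then invoke Lemma~\ref{lema:AABB-elements}, which shows $2\tree{b_2,a_1,b_1,a_4}\in[W(a^2b),W(ab^2)]$ via a three-term telescoping identity; this is the missing ingredient, and it is not a consequence of the splitting trick. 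The two-contraction case requires, in addition to the $GL_g(\mathbb{Z})$-action you mention, several further uses of Lemma~\ref{lema:AABB-elements} and the specific tree identity in point~iii) of Lemma~\ref{lema:trees-a4b2-needed}, neither of which your plan supplies.
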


\begin{proof}
By Lemma \ref{lema:[A,ABB]}, the module $[[W(a^3), W(ab^2)], W(ab^2)]$, modulo $\Gamma_3(W(a^{\geq 1}b^{\geq 1}))$, is generated by the $\mathfrak{S}_g$-orbits of the elements:
\[
\Bigg[2\tree{a_i,a_1,b_{1},a_j}, \tree{a_k,b_{i'},b_{j'}}\Bigg].
\]

We first notice that without loss of generality we can assume that $i',j',k\neq 1$. Because 
if one of the coefficients $i',j',k$ is $1$ then there is an integer $l\neq i,j,i',j',k$ and then,
\[
\tree{a_j,a_l,b_{l},a_i}-\tree{a_i,a_1,b_{1},a_j}=\Bigg[\tree{a_i,a_1,b_l},\tree{a_l,b_{1},a_j}\Bigg] \in \Gamma_2(W(a^{\geq 1}b^{\geq 1})).
\]
Hence, modulo $\Gamma_3(W(a^{\geq 1}b^{\geq 1}))$, we have that
\[
\Bigg[2\tree{a_i,a_1,b_{1},a_j}, \tree{a_k,b_{i'},b_{j'}}\Bigg]=\Bigg[2\tree{a_j,a_l,b_{l},a_i}, \tree{a_k,b_{i'},b_{j'}}\Bigg].
\]
Therefore the module $[[W(a^3), W(ab^2)], W(ab^2)]$, modulo $\Gamma_3(W(a^{\geq 1}b^{\geq 1}))$, is generated by the $\mathfrak{S}_g$-orbit of the element:
\[
\Bigg[2\tree{a_i,a_1,b_{1},a_j}, \tree{a_k,b_{2},b_{3}}\Bigg] \quad  \text{with} \;i,j,k\neq 1.
\]

We now prove that each of these generators belongs to $\Gamma_3(W(a^{\geq 1}b^{\geq 1}))$. We proceed by distinguishing the number of contractions between the trees of the Lie bracket.

\textbf{One contraction.} If there is a single contraction between $\tree{a_i,a_1,b_{1},a_j}$ and 
$\tree{a_k,b_{2},b_{3}}$, we are in the $\mathfrak{S}_g$-orbit of the element:
\[
\Bigg[ 2\tree{a_2,a_1,b_{1},a_4}, \tree{a_k,b_{2},b_{3}} \Bigg]= -\Bigg[ 2\tree{b_2,a_1,b_{1},a_4}, \tree{a_k,a_{2},b_{3}} \Bigg]\quad \text{with} \;k\neq 1,
\]
which belongs to $\Gamma_3(W(a^{\geq 1}b^{\geq 1}))$ by Lemma \ref{lema:AABB-elements}.

\textbf{Two contractions.} If there are exactly $2$ contractions between $\tree{a_i,a_1,b_{1},a_j}$ and 
$\tree{a_k,b_{i'},b_{j'}}$, we are in the $\mathfrak{S}_g$-orbits of the elements:
\begin{equation}
\label{eq:2-contract-elements}
\Bigg[ 2\tree{a_2,a_1,b_{1},a_4}, \tree{a_k,b_{2},b_{4}} \Bigg] \quad \text{and}\quad
\Bigg[ 2\tree{a_2,a_1,b_{1},a_2}, \tree{a_k,b_{2},b_{3}}\Bigg],
\qquad \text{with} \;k\neq 1.
\end{equation}

We show that the first element of \eqref{eq:2-contract-elements} belongs to $\Gamma_3(W(a^{\geq 1}b^{\geq 1}))$. We compute:
\begin{equation}
\label{eq:2-contract}
\Bigg[ 2\tree{a_2,a_1,b_{1},a_4}, \tree{a_k,b_{2},b_{4}} \Bigg] =
2\tree{a_4,b_1,a_{1},b_{4},a_k}-2\tree{a_2,a_1,b_{1},b_2,a_k}.
\end{equation}

Consider the following sum of trees, which by Lemma \ref{lema:AABB-elements} belongs to $\Gamma_3(W(a^{\geq 1}b^{\geq 1}))$.
\[
\Bigg[ 2\tree{a_2,a_1,b_{1},b_4}, \tree{a_k,b_{2},a_{4}} \Bigg]+\Bigg[ 2\tree{b_2,a_1,b_{1},a_4}, \tree{a_k,a_{2},b_{4}} \Bigg]
+\Bigg[\tree{b_1,a_1,a_3},2\tree{b_3,b_2,a_2,a_k}\Bigg].
\]
By construction, this sum is equal to:
\[
\begin{aligned}
& 2\tree{b_4,b_1,a_{1},a_{4},a_k}-2\tree{a_2,a_1,b_{1},a_k,b_2}
-2\tree{a_4,b_1,a_{1},b_{4},a_k}+2\tree{b_2,a_1,b_{1},a_k,a_2}
+2\tree{b_1,a_1,b_2,a_2,a_k} \\
& = 2\tree{b_4,b_1,a_{1},a_{4},a_k}+2\tree{a_2,a_1,b_{1},b_2,a_k}
-2\tree{a_4,b_1,a_{1},b_{4},a_k}-2\tree{b_2,a_1,b_{1},a_2,a_k}
+2\tree{b_1,a_1,b_2,a_2,a_k} \\
& = 2\tree{b_4,b_1,a_{1},a_{4},a_k}+2\tree{a_2,a_1,b_{1},b_2,a_k}
-2\tree{a_4,b_1,a_{1},b_{4},a_k}+2\tree{b_1,b_2,a_1,a_2,a_k},
\end{aligned}
\]
where in the first equality we used the AS relation in the second and fourth trees, and in the second equality we used the IHX relation in the fifth tree.

Adding this element to \eqref{eq:2-contract} we get that, modulo $\Gamma_3(W(a^{\geq 1}b^{\geq 1}))$,
\[
\Bigg[2 \tree{a_2,a_1,b_{1},a_4}, \tree{a_k,b_{2},b_{4}} \Bigg] =
2\tree{b_4,b_1,a_{1},a_{4},a_k} +2\tree{b_1,b_2,a_1,a_2,a_k} =
-2\tree{a_k,a_4,a_1,b_1,b_4}+2\tree{a_k,a_2,a_1,b_1,b_2},
\]
where in the second equality we used the AS relation in both trees.
Then, by point $iii)$ in Lemma \ref{lema:trees-a4b2-needed}, this element belongs to
$\Gamma_3(W(a^{\geq 1}b^{\geq 1}))$.

Finally, we show that the second element of \eqref{eq:2-contract-elements} belongs to $\Gamma_3(W(a^{\geq 1}b^{\geq 1}))$.
Consider the first element of \eqref{eq:2-contract-elements}, which we already know that belongs to $\Gamma_3(W(a^{\geq 1}b^{\geq 1}))$. Act by $G\in GL_g(\mathbb{Z})$, which sends $a_4$ to $a_4+a_2$ and $b_2$ to $b_2-b_4$. Then we compute:
\begin{equation*}
G\Bigg(2\Bigg[ \tree{a_2,a_1,b_{1},a_4}, \tree{a_k,b_{2},b_{4}} \Bigg]\Bigg)=
2\Bigg[ \tree{a_2,a_1,b_{1},a_4}, \tree{a_k,b_{2},b_{4}} \Bigg]
+2\Bigg[ \tree{a_2,a_1,b_{1},a_2}, \tree{a_k,b_{2},b_{4}} \Bigg],
\end{equation*}
where we used that, by linearity on the leaves and the AS relation, the action of the element $G$ leaves the second tree of the first bracket invariant.
This shows that the second element of \eqref{eq:2-contract-elements} also belongs to 
$\Gamma_3(W(a^{\geq 1}b^{\geq 1}))$.
\end{proof}
\vspace{0.3cm}

\textbf{The submodule $\overline{W}(a^4b)$.} By the same argument as before, the submodule $\overline{W}(a^4b)$ is generated by the image of the triple Lie bracket $[[\;,\;],\;]$ on the following modules:
\[
\begin{array}{ll}
(W(a^3)\wedge W(b^3))\otimes W(a^3), & (W(a^3)\wedge W(a^2b))\otimes W(ab^2), \\[1ex]
(W(a^3)\wedge W(ab^2))\otimes W(a^2b), & (W(a^2b)\wedge W(ab^2))\otimes W(a^3), \\[1ex]
\multicolumn{2}{c}{(W(a^2b)\wedge W(a^2b))\otimes W(a^2b).}
\end{array}
\]

We further reduce this list of modules as follows.
By Lemma \ref{lema:[A,B]}, the image of the module $(W(a^3)\wedge W(b^3))\otimes W(a^3)$ by the triple Lie bracket is contained in the image of the module
$(W(ab^2)\wedge W(a^2b))\otimes W(a^3)$ and, by the Jacobi identity, the image of this last module is contained in the sum of the images of the modules $(W(a^3)\wedge W(a^2b))\otimes W(ab^2)$ and $(W(a^3)\wedge W(ab^2))\otimes W(a^2b)$.
Therefore, the module $\overline{W}(a^4b)$ is generated by the image of the triple Lie bracket on the following modules:
\begin{equation}
\label{eq:list-mod-a4b1}
\begin{array}{ll}
 (W(a^3)\wedge W(a^2b))\otimes W(ab^2), & (W(a^3)\wedge W(ab^2))\otimes W(a^2b), \\[1ex]
 \multicolumn{2}{c}{(W(a^2b)\wedge W(a^2b))\otimes W(a^2b).}  
\end{array}
\end{equation}

Before computing the image of these modules by the triple Lie bracket $[[\;,\;],\;]$, modulo $\Gamma_3(W(a^2b))$, we show the following preliminary result:

\begin{lemma}
\label{lema:trees-ab4-needed}
For any integer $g \geq 6$ the following trees belong to $\Gamma_3(W(a^2b))$.
\begin{multicols}{2}
\begin{enumerate}[i)]
\item $\tree{a_3,a_2,a_j,b_1,a_i}$ for
$j\neq 1$,
\item $\tree{a_i,a_1,a_j,b_1,a_k}-\tree{a_i,a_{2},a_j,b_{2},a_k}$
$\begin{array}{c}
\text{for} \\
i,j\notin \lbrace 1,2 \rbrace,
\end{array}$
\item $\tree{a_i,a_j,a_1,b_1,a_k}-\tree{a_i,a_j,a_2,b_2,a_k}$
$\begin{array}{c}
\text{for} \\
i,j\notin \lbrace 1,2 \rbrace,
\end{array}$
\item $\tree{a_i,a_1,a_j,b_1,a_k}-\tree{a_k,a_1,a_i,b_1,a_j}$
$\begin{array}{c}
\text{for} \\
i,j,k \neq 1,
\end{array}
$
\item $\tree{a_i,a_j,a_1,b_1,a_k}-\tree{a_k,a_i,a_1,b_1,a_j}$
$\begin{array}{c}
\text{for} \\
i,j,k \neq 1,
\end{array}$
\item $\tree{a_3,a_1,a_1,b_1,a_k}-\tree{a_3,a_1,a_{2},b_2,a_k}
- \tree{a_3,a_2,a_1,b_2,a_k}$ for $k\neq 2$. 
\end{enumerate}
\end{multicols}

\end{lemma}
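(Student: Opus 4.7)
The strategy closely mirrors that of Lemma \ref{lema:trees-a4b2-needed}: establish (i) by an explicit triple Lie bracket, then derive (ii)--(vi) from (i) by combining $GL_g(\mathbb{Z})$-actions, IHX and AS manipulations. The crucial difference with the previous lemma is that now all three bracket-factors must lie in $W(a^2b)$, which constrains the available constructions.

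For (i), I would take
\[
T_1 = \tree{a_3,a_2,b_s}, \qquad T_2 = \tree{a_s,a_j,b_r}, \qquad T_3 = \tree{a_r,b_1,a_i},
\]
each visibly in $W(a^2b)$, with the auxiliary indices $s,r$ chosen distinct from each other and from $\{1,2,3,i,j\}$ (possible because $g\geq 6$). In the bracket $[T_1,T_2]$ the only nonvanishing symplectic pairing is $(b_s,a_s)$, producing $-\tree{a_3,a_2,a_j,b_r}$; bracketing with $T_3$, the potential contractions are $(b_r,a_r)$ and $(a_j,b_1)$. The hypothesis $j\neq 1$ kills the second, so only $(b_r,a_r)$ contributes, delivering $\tree{a_3,a_2,a_j,b_1,a_i}\in \Gamma_3(W(a^2b))$ on the nose.

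For (ii) and (iii), the natural move is to apply to an instance of (i) an elementary matrix $G = \mathrm{Id} + E_{p,q}\in GL_g(\mathbb{Z})$ sending $a_q \mapsto a_q + a_p$ and $b_p \mapsto b_p - b_q$ (the dual action on $B$ is the one that keeps $W(a^2b)$ invariant). Because $G$ preserves $\Gamma_3(W(a^2b))$, the expression $G(\cdot) - (\cdot)$ applied to a known element still lies in $\Gamma_3(W(a^2b))$. For (ii), starting from an instance of (i) with second-leaf index $2$ and $b$-index $2$ and acting by the matrix exchanging the roles of $1$ and $2$, one produces exactly the asserted difference up to trees with the same shape but different indices, which are themselves instances of (i). The constraints $i,j\notin \{1,2\}$ guarantee that no spurious symplectic contractions appear in the expansion. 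Item (iii) is entirely parallel; (iv) and (v) express an antisymmetry under index swaps, and I would obtain them either by combining two instances of (ii) (resp. (iii)) with indices permuted so that the common summand cancels, or by applying the IHX relation at the appropriate interior edge of the degree-$3$ tree to rewrite $\tree{a_i,a_1,a_j,b_1,a_k}$ as $\tree{a_k,a_1,a_i,b_1,a_j}$ plus a tree already handled.

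Finally, (vi) features a three-term combination and the ``singular'' tree $\tree{a_3,a_1,a_1,b_1,a_k}$ with a repeated label. The three-term shape is the signature of an IHX relation, so I would obtain it by acting on $\tree{a_3,a_1,a_2,b_2,a_k}$ (known to lie in $\Gamma_3(W(a^2b))$ by (iii)) with the elementary matrix $a_2 \mapsto a_2 + a_1$, $b_1 \mapsto b_1 - b_2$, and then mopping up the remaining two-term pieces using (ii) and (iii). The main obstacle throughout is the combinatorial one of selecting the right triple brackets and the right elementary matrices so that the spurious contractions in the expansions either vanish by index choice or collapse to trees already in $\Gamma_3(W(a^2b))$; once the right combinations are pinned down, the computations themselves are mechanical applications of IHX, AS, and linearity.
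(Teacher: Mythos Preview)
Your approach to (i) is the paper's, except that you use two auxiliary indices $s,r$ where the paper uses a single index $l$ twice (taking $T_2=\tree{a_l,a_j,b_l}$). This matters: with $\{1,2,3,i,j\}$ possibly five distinct indices, choosing two further fresh ones requires $g\geq 7$, contradicting your parenthetical ``possible because $g\geq 6$''. The paper's one-index trick stays within $g\geq 6$.

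For (ii), (iii), (v) your outline is close to the paper's: (ii) is indeed obtained by acting on an instance of (i) with an elementary matrix (the paper uses $b_1\mapsto b_1-b_2$, $a_2\mapsto a_2+a_1$), and (iii), (v) are reduced by IHX/AS to (ii) and (iv) respectively.

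The genuine gap is at (iv) and (vi). Neither of your proposed routes for (iv) works. Combining two instances of (ii) with permuted indices only shows that the (iv)-expression with contraction pair $(a_1,b_1)$ equals the same expression with $(a_2,b_2)$ modulo $\Gamma_3(W(a^2b))$; the common summand does \emph{not} cancel. And IHX at an interior edge does not convert $\tree{a_i,a_1,a_j,b_1,a_k}$ into $\tree{a_k,a_1,a_i,b_1,a_j}$ plus anything already handled: the obstruction is that in both trees the second leaf and the $b$-leaf share the index $1$, which is precisely the case excluded by (i) after permutation. The paper instead produces (iv) by a \emph{new} explicit triple bracket
\[
\Bigg[\Bigg[\tree{a_i,a_1,b_l},\tree{a_l,b_1,a_j}\Bigg],\tree{a_1,a_k,b_1}\Bigg],
\]
whose inner bracket has two contractions and whose expansion collapses (after AS) to exactly the difference in (iv).

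Similarly, in (vi) the single tree $\tree{a_3,a_1,a_2,b_2,a_k}$ is \emph{not} known to lie in $\Gamma_3(W(a^2b))$ by (iii)---(iii) only gives a difference---nor is it an instance of (i) after permutation (the middle leaf and the $b$-leaf again share an index). The paper handles (vi) with another explicit triple bracket $[[\tree{a_3,a_1,b_l},\tree{a_l,a_1,b_2}],\tree{a_2,b_1,a_k}]$ and then invokes (i) and (iv) to identify the four-term output with the three-term claim. Your elementary-matrix strategy does not reach the singular tree $\tree{a_3,a_1,a_1,b_1,a_k}$ without such a direct construction.
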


\begin{proof}

\textbf{Elements i)} Let $i,j\in \mathbb{N}$ such that $j\neq 1$. Given $l\in \mathbb{N}$ such that $l\notin \lbrace i,j,1,2,3 \rbrace$. A direct computation shows that:
\[
\tree{a_3,a_2,a_j,b_1,a_i}=\Bigg[\Bigg[\tree{a_3,a_2,b_{l}},\tree{a_{l},a_j,b_{l}}\Bigg],
\tree{a_{l},b_1,a_i}\Bigg].
\]
Therefore this element belongs to $\Gamma_3(W(a^2b))$.
\vspace{0.5cm}

\textbf{Elements ii)}
Let $i,j,k\in \mathbb{N}$ such that $i,j\neq 1$ and $i,j,k\neq 2$. Consider the element $\tree{a_i,a_2,a_j,b_1,a_k},$ which belongs to $\Gamma_3(W(a^2b))$ by point $i)$. Act by $G\in GL_g(\mathbb{Z})$, which sends $b_{1}$ to $b_{1}-b_{2}$ and $a_{2}$ to $a_{2}+a_{1}$. Then we get another element of $\Gamma_3(W(a^2b))$ given by:
\[
G\Bigg(\tree{a_i,a_2,a_j,b_1,a_k}\Bigg)= \tree{a_i,a_2,a_j,b_1,a_k}+\tree{a_i,a_1,a_j,b_{1},a_k} -\tree{a_i,a_2,a_j,b_{2},a_k}-\tree{a_i,a_1,a_j,b_2,a_k},
\]
which, by point $i)$, modulo $\Gamma_3(W(a^2b))$, is equal to:
\[
\tree{a_i,a_1,a_j,b_{1},a_k}-\tree{a_i,a_2,a_j,b_{2},a_k}
\quad \text{with } i,j\notin \lbrace 1,2 \rbrace \text{ and } k\neq 2.
\]
Therefore this element belongs to $\Gamma_3(W(a^2b))$.

Notice that, in fact, we can drop out the condition $k\neq 2$.
Because, if we act by the permutation $G=(1,2)$ on the element $\tree{a_i,a_2,a_j,b_{2},a_1}-\tree{a_i,a_1,a_j,b_{1},a_1}$, which we already know that belongs to $\Gamma_3(W(a^2b))$, we have the following equality:
\[
G\Bigg(\tree{a_i,a_2,a_j,b_{2},a_1}-\tree{a_i,a_1,a_j,b_{1},a_1}\Bigg)
=\tree{a_i,a_1,a_j,b_{1},a_2}-\tree{a_i,a_2,a_j,b_{2},a_2}.
\]
Hence this last element also belongs to $\Gamma_3(W(a^2b))$.
\vspace{0.5cm}

\textbf{Elements iii)}
Let $i,j,k\in \mathbb{N}$ such that $i,j\notin \lbrace 1,2 \rbrace$. Using the IHX and AS relations, we get that,
\[
\begin{aligned}
\tree{a_i,a_j,a_1,b_1,a_k}-\tree{a_i,a_j,a_2,b_2,a_k} & =
\tree{a_i,a_1,a_j,b_1,a_k}+\tree{a_1,a_j,a_i,b_1,a_k}
-\tree{a_i,a_2,a_j,b_2,a_k}-\tree{a_2,a_j,a_i,b_2,a_k} \\ & =
\tree{a_i,a_1,a_j,b_1,a_k}-\tree{a_j,a_1,a_i,b_1,a_k}
-\tree{a_i,a_2,a_j,b_2,a_k}+\tree{a_j,a_2,a_i,b_2,a_k},
\end{aligned}
\]
and by point $ii)$ this element belongs to $\Gamma_3(W(a^2b))$.

\vspace{0.5cm}

\textbf{Elements iv)}
Let $i,j,k\in \mathbb{N}$ such that $i,j,k\neq 1$. Given $l\in \mathbb{N}$ such that $l\notin \lbrace 1,i,j,k \rbrace$, a direct computation shows that:
\[
\begin{aligned}
\Bigg[ \Bigg[\tree{a_i,a_1,b_{l}},\tree{a_{l},b_1,a_j}\Bigg], \tree{a_1,a_k,b_1}\Bigg]= &
\; \Bigg[ \tree{b_{l},a_i,a_j,a_{l}}, \tree{a_1,a_k,b_1}\Bigg]
 -\Bigg[ \tree{a_i,a_1,b_1,a_j}, \tree{a_1,a_k,b_1}\Bigg] \\
= & \; \tree{a_1,a_i,a_j,a_k,b_1}-\tree{b_1,a_j,a_i,a_1,a_k} \\
= & \; \tree{a_i,a_1,a_j,b_1,a_k}-\tree{a_k,a_1,a_i,b_1,a_j},
\end{aligned}
\]
where in the last equality we used the AS relation on both trees.
Therefore this element belongs to $\Gamma_3(W(a^2b))$.
\vspace{0.5cm}

\textbf{Elements v)}
Let $i,j,k\in \mathbb{N}$ such that $i,j,k\neq 1$.
Using the IHX relation and after the AS relation, we get that,
\[
\begin{aligned}
\tree{a_i,a_j,a_1,b_1,a_k}-\tree{a_k,a_i,a_1,b_1,a_j} & =
\tree{a_i,a_1,a_j,b_1,a_k}+\tree{a_1,a_j,a_i,b_1,a_k}
-\tree{a_k,a_1,a_i,b_1,a_j}-\tree{a_1,a_i,a_k,b_1,a_j} \\
& =
\tree{a_i,a_1,a_j,b_1,a_k}-\tree{a_j,a_1,a_i,b_1,a_k}
-\tree{a_k,a_1,a_i,b_1,a_j}+\tree{a_i,a_1,a_k,b_1,a_j},
\end{aligned}
\]
and by point $iv)$ this element belongs to $\Gamma_3(W(a^2b))$.
\vspace{0.5cm}

\textbf{Elements vi)}
Let $k\in \mathbb{N}$ such that k$\neq 2$. Given $l\in \mathbb{N}$ such that $l\notin \lbrace 1,2,3 \rbrace$. Consider the element of $\Gamma_3(W(a^2b))$,
\[
\begin{aligned}
\Bigg[ \Bigg[\tree{a_3,a_1,b_{l}},\tree{a_{l},a_1,b_2}\Bigg], \tree{a_2,b_1,a_k} \Bigg]& =
-\Bigg[  \tree{a_3,a_1,a_1,b_2},\tree{a_2,b_1,a_k}\Bigg] \\
 & = \tree{a_3,a_1,a_1,b_1,a_k}-\tree{a_1,a_{3},b_{2},a_k,a_2} - \tree{a_1,b_2,a_3,a_k,a_2} \\
 & = \tree{a_3,a_1,a_1,b_1,a_k}-\tree{a_3,a_1,b_{2},a_2,a_k} - \tree{a_k,a_2,a_3,b_2,a_1} \\
& = \tree{a_3,a_1,a_1,b_1,a_k}-\tree{a_3,a_{1},a_{2},b_2,a_k}-\tree{a_3,a_{1},a_k,a_2,b_2} - \tree{a_k,a_2,a_3,b_2,a_1},
\end{aligned}
\]
where in the third equality we used the AS relation in the last two trees, and in the last equality we used the IHX relation on the second tree.

Finally, by points i) and iv), modulo $\Gamma_3(W(a^2b))$, this element is equal to:
\[
\tree{a_3,a_1,a_1,b_1,a_k}-\tree{a_3,a_1,a_{2},b_2,a_k}
- \tree{a_3,a_2,a_1,b_2,a_k} \quad \text{for } k\neq 2. 
\]
Therefore this element belongs to $\Gamma_3(W(a^2b))$.
\end{proof}

We now show that the image of the module $(W(a^3)\wedge W(ab^2))\otimes W(a^2b)$ by the triple Lie bracket is contained in the sum of the image of the modules $(W(a^3)\wedge W(a^2b))\otimes W(ab^2)$ and $(W(a^2b)\wedge W(a^2b))\otimes W(a^2b)$. As a consequence, the submodule $\overline{W}(a^4b^1)$ is the sum of modules $[[W(a^3), W(a^2b)],W(ab^2)]$ and $\Gamma_3(W(a^2b))$.

\begin{lemma}
\label{lema:gen W_1ab4}
For any integer $g \geq 6$, we have the following inclusion:
\[
[[W(a^3), W(ab^2)], W(a^2b)] \subset [[W(a^3), W(a^2b)],W(ab^2)]+\Gamma_3(W(a^2b)).
\]
Moreover, the module $[[W(a^3), W(a^2b)],W(ab^2)]$, modulo $\Gamma_3(W(a^2b))$, is generated by the $\mathfrak{S}_g$-orbit of the tree:
\[
\tree{a_1,a_2,a_4,b_4,a_3}.
\]
\end{lemma}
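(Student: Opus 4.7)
The plan is to mirror the two-stage strategy used in Lemmas \ref{lema:[[A,AB],B]} and \ref{lema:[[A,AAB],AAB]}: first reduce the generating set of the outer triple bracket via Lemma \ref{lema:[A,ABB]}, and then perform a case analysis on the number of symplectic contractions, using the AS and IHX relations together with the auxiliary identities of Lemma \ref{lema:trees-ab4-needed}.

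For the inclusion, Lemma \ref{lema:[A,ABB]} tells us that $[W(a^3), W(ab^2)]$ is generated, modulo $[W(a^2b), W(a^2b)]$, by the $\mathfrak{S}_g$-orbit of $2\tree{a_i,a_1,b_1,a_j}$. Since $[[W(a^2b), W(a^2b)], W(a^2b)]$ is already contained in $\Gamma_3(W(a^2b))$, it suffices to show that every bracket $\left[2\tree{a_i,a_1,b_1,a_j}, T\right]$ with $T \in W(a^2b)$ lies in $[[W(a^3), W(a^2b)], W(ab^2)] + \Gamma_3(W(a^2b))$. The natural tool is the Jacobi identity
\[
[[X,Y],Z] = [[X,Z],Y] + [X,[Y,Z]]
\]
applied with $X \in W(a^3)$, $Y \in W(ab^2)$, $Z \in W(a^2b)$: the first summand sits in $[[W(a^3), W(a^2b)], W(ab^2)]$, so the task reduces to controlling $[X, [Y,Z]]$. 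Here $[Y,Z] \in [W(ab^2), W(a^2b)] \subseteq W_2(a^2b^2)$, and every tree in $W_2(a^2b^2)$ can itself be rewritten as a bracket $[U, V]$ with $U \in W(a^2b)$, $V \in W(ab^2)$ by cutting along its unique middle edge and labelling the two cut endpoints by a symplectic pair $(a_l, b_l)$ with $l$ fresh (possible since $g \geq 6$). A second application of Jacobi then deposits the remaining term into $[[W(a^3), W(a^2b)], W(ab^2)] + \Gamma_3(W(a^2b))$.

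For the generation statement, recall from the proof of Lemma \ref{lema:[[A,AB],B]} that $[W(a^3), W(a^2b)] = W(a^4)$, so $[[W(a^3), W(a^2b)], W(ab^2)] = [W(a^4), W(ab^2)]$. Since $\omega(a,a) = 0$, the bracket of $\tree{a_i, a_j, a_k, a_l}$ with $\tree{a_m, b_n, b_p}$ has at most two contractions. I would handle the one-contraction case first: up to the AS relation and the $\mathfrak{S}_g$-action, a representative is $\left[\tree{a_i,a_j,a_k,a_1}, \tree{a_m,b_1,b_n}\right]$, which after expansion and systematic application of the AS and IHX relations yields an integer combination of trees in the orbit of $\tree{a_1,a_2,a_4,b_4,a_3}$ plus trees that Lemma \ref{lema:trees-ab4-needed} identifies as elements of $\Gamma_3(W(a^2b))$. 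The two-contraction case is then handled, in the style of Lemma \ref{lema:[[A,AB],B]}, by acting on a one-contraction representative by an elementary matrix $\mathrm{Id} + E_{i,j} \in GL_g(\mathbb{Z})$ that introduces the extra contraction, noting that the $GL_g(\mathbb{Z})$-action preserves both $[W(a^4), W(ab^2)]$ and $\Gamma_3(W(a^2b))$.

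The main obstacle will be the meticulous bookkeeping of the AS and IHX simplifications in the one-contraction case, verifying that precisely the identities provided by Lemma \ref{lema:trees-ab4-needed} suffice to collapse all remaining configurations onto the single canonical generator $\tree{a_1,a_2,a_4,b_4,a_3}$ modulo $\Gamma_3(W(a^2b))$. The proof is essentially computational and follows the technical pattern already established, with Lemma \ref{lema:trees-ab4-needed} providing exactly the auxiliary relations needed to close each case.
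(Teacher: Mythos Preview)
Your plan has two genuine gaps.

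\textbf{The Jacobi argument for the inclusion is circular.} After the first application of Jacobi you are left with $[X,[Y,Z]]$, with $X\in W(a^3)$ and $[Y,Z]\in W_2(a^2b^2)$. You propose to rewrite $[Y,Z]$ as $[U,V]$ with $U\in W(a^2b)$, $V\in W(ab^2)$ and apply Jacobi again:
\[
[X,[U,V]]=[[X,U],V]-[[X,V],U].
\]
The first summand lands where you want, but the second has $[X,V]\in[W(a^3),W(ab^2)]$ bracketed with $U\in W(a^2b)$, which is exactly an element of $[[W(a^3),W(ab^2)],W(a^2b)]$, the module you started with. Nothing has decreased, and the argument does not close. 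The paper avoids this entirely: it computes generators of \emph{both} triple brackets modulo $\Gamma_3(W(a^2b))$ separately, showing that $[[W(a^3),W(a^2b)],W(ab^2)]$ is generated by the $\mathfrak{S}_g$-orbit of $\tree{a_1,a_2,a_4,b_4,a_3}$ while $[[W(a^3),W(ab^2)],W(a^2b)]$ is generated by the orbit of $2\tree{a_1,a_2,a_4,b_4,a_3}$. The inclusion then follows because the second generator is twice the first.

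\textbf{The contraction count is wrong.} Your claim that the bracket of $\tree{a_i,a_j,a_k,a_l}$ with $\tree{a_m,b_n,b_p}$ has at most two contractions is false when the indices $i,j,k,l$ are not distinct: for example $\tree{a_1,a_2,a_1,a_3}$ against $\tree{a_k,b_1,b_2}$ has three nonzero pairings, and $\tree{a_1,a_2,a_1,a_2}$ against $\tree{a_k,b_1,b_2}$ has four. The paper's case analysis for $[W(a^4),W(ab^2)]$ accordingly runs through one, two, three, and four contractions, and the three- and four-contraction cases require the full strength of Lemma~\ref{lema:trees-ab4-needed} (in particular part~\textit{vi)}) rather than just a $GL_g(\mathbb{Z})$-action trick. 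Your one-contraction-then-elementary-matrix strategy reaches only the two-contraction case and stops short.
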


\begin{proof}
We first prove that
the module $[[W(a^3), W(a^2b)],W(ab^2)]$, modulo $\Gamma_3(W(a^2b))$, is generated by
the $\mathfrak{S}_g$-orbit of the tree
$\tree{a_1,a_2,a_4,b_4,a_3}$. Then we show that the module 
$[[W(a^3), W(ab^2)], W(a^2b)]$, modulo $\Gamma_3(W(a^2b))$, is generated by
the $\mathfrak{S}_g$-orbit of the tree $2\tree{a_1,a_2,a_4,b_4,a_3}$. Therefore, modulo $\Gamma_3(W(a^2b))$, the module
$[[W(a^3), W(ab^2)], W(a^2b)]$ is contained in $[[W(a^3), W(a^2b)],W(ab^2)],$
and hence we get the inclusion of the statement.

Remember that, as we have already seen at the beginning of the proof of Lemma \ref{lema:[[A,AB],B]}, we know that
$[W(a^3), W(a^2b)]= W(a^4)$. Hence,
\[
[[W(a^3), W(a^2b)],W(ab^2)]=[W(a^4),W(ab^2)].
\]
The generators of this last module are given by the $\mathfrak{S}_g$-orbits of the following elements:
\[
\Bigg[\tree{a_l,a_i,a_j,a_m}, \tree{b_{1},b_{2},a_{k}}\Bigg]\quad \text{with }
1\leq i,j,k,l,m \leq g.
\]

We now reduce the number of generators,
modulo $\Gamma_3(W(a^2b))$, in terms of the contractions between the first and second tree.
\vspace{0.3cm}

\textbf{One contraction.} If there is a single contraction between $\tree{a_l,a_i,a_j,a_m}$ and $\tree{b_{1},b_{2},a_{k}}$, modulo the AS relation, we are in the $\mathfrak{S}_g$-orbit of the element:
\[
\Bigg[\tree{a_3,a_4,a_j,a_1},\tree{b_{1},b_{2},a_k}\Bigg]=-\Bigg[\tree{a_3,a_4,a_j,b_1},\tree{a_{1},b_{2},a_k}\Bigg]
=\Bigg[\Bigg[\tree{a_3,a_4,b_2},\tree{a_2,a_j,b_1}\Bigg],\tree{a_{1},b_{2},a_k}\Bigg],
\]
with $j\neq 1,2$, which by construction belongs to $\Gamma_3(W(a^2b))$.
\vspace{0.3cm}

\textbf{Two contractions.} If there are exactly $2$ contractions between $\tree{a_l,a_i,a_j,a_m}$ and $\tree{b_{1},b_{2},a_{k}}$, modulo the AS relation, we are in the $\mathfrak{S}_g$-orbits of the elements:
\begin{equation}
\label{eq:last-2-contract-elements}
\Bigg[\tree{a_1,a_i,a_j,a_1}, \tree{b_1,b_2,a_k}\Bigg], \qquad
\Bigg[\tree{a_1,a_i,a_j,a_2}, \tree{b_2,b_1,a_k}\Bigg] \quad \text{and} \quad
\Bigg[\tree{a_1,a_2,a_3,a_4}, \tree{b_1,b_2,a_k}\Bigg],
\end{equation}
with $i,j\notin \lbrace 1,2 \rbrace$.

For the first element of \eqref{eq:last-2-contract-elements} we compute:
\[
\Bigg[\tree{a_1,a_i,a_j,a_1}, \tree{b_1,b_2,a_k}\Bigg] = \tree{a_1,a_i,a_j,b_2,a_k}+\tree{a_1,a_j,a_i,b_2,a_k}
\quad \text{with } i,j\notin \lbrace 1,2 \rbrace,
\]
which by point $i)$ in Lemma \ref{lema:trees-ab4-needed} belongs to $\Gamma_3(W(a^2b))$.

For the second element of \eqref{eq:last-2-contract-elements} we compute:
\[
\begin{aligned}
\Bigg[\tree{a_1,a_i,a_j,a_2}, \tree{b_2,b_1,a_k}\Bigg] & = \tree{a_1,a_i,a_j,b_1,a_k}+\tree{a_2,a_j,a_i,a_k,b_2} \\
& = \tree{a_j,a_i,a_1,b_1,a_k}+\tree{a_1,a_j,a_i,b_1,a_k}
+\tree{a_2,a_j,a_i,a_k,b_2}\\
& = \tree{a_j,a_i,a_1,b_1,a_k}-\tree{a_j,a_1,a_i,b_1,a_k}
+\tree{a_j,a_2,a_i,b_2,a_k} \quad \text{with } i,j\notin \lbrace 1,2 \rbrace,
\end{aligned}
\]
where in the second equality we used the IHX relation on the first tree,
and in the last equality we used the AS relation on the last two trees.

By point $ii)$ in Lemma \ref{lema:trees-ab4-needed}, modulo
$\Gamma_3(W(a^2b))$, this element becomes:
\[
\tree{a_j,a_i,a_1,b_1,a_k} \quad \text{with } i,j\notin \lbrace 1,2 \rbrace.
\]

For the third element of \eqref{eq:last-2-contract-elements}, using the IHX relation we compute:
\[
\Bigg[\tree{a_1,a_2,a_3,a_4}, \tree{b_1,b_2,a_k}\Bigg]=
\Bigg[\tree{a_1,a_3,a_2,a_4}, \tree{b_1,b_2,a_k}\Bigg]+
\Bigg[\tree{a_3,a_2,a_1,a_4}, \tree{b_1,b_2,a_k}\Bigg],
\]
which is a sum of the second type of elements \eqref{eq:last-2-contract-elements}.
\vspace{0.3cm}

\textbf{Three contractions.} If there are exactly $3$ contractions between $\tree{a_l,a_i,a_j,a_m}$ and $\tree{b_{1},b_{2},a_{k}}$, modulo the AS relation, we are in the $\mathfrak{S}_g$-orbits of the element:
\[
\begin{aligned}
\Bigg[\tree{a_1,a_2,a_3,a_1}, \tree{b_1,b_2,a_k}\Bigg]
& = \tree{a_1,a_2,a_3,b_2,a_k}+\tree{a_1,a_3,a_2,b_2,a_k}
-\tree{a_3,a_1,a_1,b_1,a_k},
\end{aligned}
\]
where in the last equality we used the AS relation on the third tree.

Without loss of generality, we may assume that $k\neq 4$.
By point $vi)$ in Lemma \ref{lema:trees-ab4-needed}, modulo
$\Gamma_3(W(a^2b))$, this element is equal to:
\[
\tree{a_1,a_2,a_3,b_2,a_k}+\tree{a_1,a_3,a_2,b_2,a_k} -\tree{a_3,a_4,a_1,b_4,a_k}
-\tree{a_3,a_1,a_4,b_4,a_k},
\]
and, by point $ii)$ in Lemma \ref{lema:trees-ab4-needed}, modulo
$\Gamma_3(W(a^2b))$, this element becomes:
\[
\tree{a_1,a_2,a_3,b_2,a_k}+\tree{a_1,a_3,a_2,b_2,a_k} -\tree{a_3,a_2,a_1,b_2,a_k}
-\tree{a_3,a_1,a_4,b_4,a_k},
\]
which, by the IHX relation on the first tree, is equal to:
\[
2\tree{a_1,a_3,a_2,b_2,a_k}
-\tree{a_3,a_1,a_4,b_4,a_k}.
\]

\textbf{Four contractions.} If there are exactly $4$ contractions between $\tree{a_l,a_i,a_j,a_m}$ and $\tree{b_{1},b_{2},a_{k}}$, modulo the AS relation, we are in the $\mathfrak{S}_g$-orbit of the element:
\[
\Bigg[\tree{a_1,a_2,a_2,a_1}, \tree{b_1,b_2,a_k}\Bigg]=2\tree{a_1,a_2,a_2,b_2,a_k}+2\tree{a_2,a_1,a_1,a_k,b_1}
=2\tree{a_1,a_2,a_2,b_2,a_k}-2\tree{a_2,a_1,a_1,b_1,a_k},
\]
where in the last equality we used the AS relation on the second tree.
Without loss of generality, we may assume that $k\neq 3$.

By point $vi)$ in Lemma \ref{lema:trees-ab4-needed}, modulo
$\Gamma_3(W(a^2b))$, this element is equal to:
\[
2\tree{a_1,a_3,a_2,b_3,a_k}-2\tree{a_2,a_3,a_1,b_3,a_k}
+2\tree{a_1,a_2,a_3,b_3,a_k}
-2\tree{a_2,a_1,a_3,b_3,a_k}
=6\tree{a_1,a_2,a_3,b_3,a_k},
\]
where in the last equality we used the 
IHX relation on the first tree and the AS relation on the fourth tree.

\vspace{0.3cm}

Therefore the generators of the module $[[W(a^3), W(a^2b)], W(ab^2)]$, modulo
$\Gamma_3(W(a^2b))$, are the $\mathfrak{S}_g$-orbits of the elements of the form:
\[
\tree{a_1,a_2,a_4,b_4,a_k} \quad \text{with } k\in \mathbb{N},
\]
which are the $\mathfrak{S}_g$-orbits of the following elements:
\[
\tree{a_1,a_2,a_4,b_4,a_1},\quad \tree{a_1,a_2,a_4,b_4,a_4}\quad \text{and} \quad \tree{a_1,a_2,a_4,b_4,a_3}.
\]
By point $v)$ in Lemma \ref{lema:trees-ab4-needed} and the AS relation, modulo $\Gamma_3(W(a^2b))$,
\begin{equation}
\label{eq:gen-AS-red}
\tree{a_1,a_2,a_4,b_4,a_1}=\tree{a_1,a_1,a_4,b_4,a_2}=0.
\end{equation}
Moreover, by point $ii)$ in Lemma \ref{lema:trees-ab4-needed}, modulo $\Gamma_3(W(a^2b))$,
\[
\tree{a_1,a_2,a_4,b_4,a_4}=\tree{a_1,a_2,a_5,b_5,a_4},
\]
and this last element is in the $\mathfrak{S}_g$-orbit of $\tree{a_1,a_2,a_4,b_4,a_3}$.

Therefore, the module $[[W(a^3), W(a^2b)], W(ab^2)]$, modulo $\Gamma_3(W(a^2b))$, is generated by the
$\mathfrak{S}_g$-orbit of the element:
\begin{equation}
\label{eq:gen-1-trees}
\tree{a_1,a_2,a_4,b_4,a_3}.
\end{equation}

We now reduce the set of generators of $[[W(a^3),W(ab^2)], W(a^2b)]$, modulo $\Gamma_3(W(a^2b))$.
By Lemma \ref{lema:[A,ABB]}, this module is generated by the elements:
\[
\Bigg[2\tree{a_i,b_4,a_4,a_{j}}, \tree{b_k,a_l,a_m}\Bigg] \quad \text{with}\quad i,j,k,l,m\in \mathbb{N}.
\]
In fact, analogously to the first part of the proof of Lemma \ref{lema:[[A,AAB],AAB]}, without loss of generality, we can assume that $i,j,k,l,m\neq 4$. Therefore we have the $\mathfrak{S}_g$-orbits of the elements:
\[
\Bigg[2\tree{a_{i},b_4,a_4,a_j}, \tree{b_k,a_2,a_1}\Bigg] \quad \text{with}\quad i,j,k\neq 4.
\]

Then the image of the module, modulo $\Gamma_3(W(a^2b))$, is generated by the $\mathfrak{S}_g$-orbits of the image of the elements:
\[
\begin{array}{c}
\Bigg[2\tree{a_i,b_4,a_4,a_j}, \tree{b_j,a_2,a_1}\Bigg]=2\tree{a_i,b_4,a_4,a_2,a_1}, \qquad
\Bigg[2\tree{a_j,b_4,a_4,a_i}, \tree{b_j,a_2,a_1}\Bigg]=2\tree{a_i,a_4,b_4,a_2,a_1}, \\
\Bigg[2\tree{a_i,b_4,a_4,a_i}, \tree{b_i,a_2,a_1}\Bigg] =
2\tree{a_i,b_4,a_4,a_2,a_1}+2\tree{a_i,a_4,b_4,a_2,a_1},
\end{array}
\]
with $i,j\neq 4$ and $i\neq j$.

Notice that the third element is the sum of the first two elements.
Moreover, by the IHX relation we have that that
\[
2\tree{a_i,b_4,a_4,a_2,a_1}=
2\tree{a_i,a_4,b_4,a_2,a_1}+2\tree{a_4,b_4,a_i,a_2,a_1} \quad \text{with }
i\neq 4,
\]
where the last summand belongs to $\Gamma_3(W(a^2b))$ by point $i)$ of Lemma \ref{lema:trees-ab4-needed}.

Therefore the generators of the module $[[W(a^3), W(ab^2)], W(a^2b)]$, modulo $\Gamma_3(W(a^2b))$, are the $\mathfrak{S}_g$-orbits of the elements:
\[
2\tree{a_i,b_4,a_4,a_2,a_1}=2\tree{a_1,a_2,a_4,b_4,a_i} \quad \text{with }
i\neq 4,
\]
where in the equality we used the AS relation.
By the equality \eqref{eq:gen-AS-red}, these trees are zero modulo $\Gamma_3(W(a^2b))$ unless $i\neq 1,2$. Hence, 
the module $[[W(a^3), W(ab^2)], W(a^2b)]$, modulo $\Gamma_3(W(a^2b))$, is generated by the $\mathfrak{S}_g$-orbit of the element:
\begin{equation}
\label{eq:gen-2-trees}
2\tree{a_1,a_2,a_4,b_4,a_3}.
\end{equation}

Finally, by looking at the generators \eqref{eq:gen-1-trees} and \eqref{eq:gen-2-trees},
modulo $\Gamma_3(W(a^2b))$, the module 
$[[W(a^3), W(ab^2)], W(a^2b)]$ is contained in the module
$[[W(a^3), W(a^2b)],W(ab^2)]$ and hence we get the inclusion of the statement.
\end{proof}

\textbf{The submodule $\overline{W}(a^5)$.} Finally, in the same way we showed $W(a^4)=[W(a^3),W(a^2b)]$ at the beginning of the proof of Lemma \ref{lema:[[A,AB],B]}, we show that $\overline{W}(a^5)=W(a^5)$ and that this submodule is equal to the image of the triple Lie bracket $[[\;,\;],\;]$ on the module:
\[
(W(a^3)\wedge W(a^2b))\otimes W(a^2b).
\]

By definition $\overline{W}(a^5)\subset W(a^5)$. Moreover any tree of $W(a^5)$ is in the $\mathfrak{S}_g$-orbit of:
\[
\tree{a_1,a_2,a_i,a_j,a_k}=\Bigg[\Bigg[\tree{a_1,a_2,a_{l}},\tree{b_{l},a_i,a_{l}}\Bigg],\tree{b_{l},a_j,a_k}\Bigg]
\quad \text{with }
\begin{array}{c}
1\leq i,j,k,l\leq g,\\
l\neq 1,2,i,j,k
\end{array}
\]
which belongs to $[[W(a^3), W(a^2b)], W(a^2b)]$.
Therefore we have the following equalities:
\[
\overline{W}(a^5)=W(a^5)=[[W(a^3), W(a^2b)], W(a^2b)].
\]

\vspace{0.3cm}

To sum up, looking at the coloring of leaves of the resulting trees when applying the triple Lie bracket $[[\;,\;],\;]$, we get that the submodules $\overline{W}(a^ib^j)$ given in the decomposition in Proposition \ref{prop:desc-m(3)}, are generated by the image of the triple Lie bracket $[[\;,\;],\;]$ on the following modules:

\renewcommand{\arraystretch}{1.3} 

\begin{table}[!ht]
\centering
\begin{tabular}{|C|C|}
\hline
\overline{W}(a^5) & (W(a^3)\wedge W(a^2b))\otimes W(a^2b),  \\ \hline 
\overline{W}(a^4b^1) & (W(a^3)\wedge W(a^2b))\otimes W(ab^2), \quad (W(a^2b)\wedge W(a^2b))\otimes W(a^2b),  \\ \hline 
\overline{W}(a^3b^2) & (W(a^2b)\wedge W(ab^2))\otimes W(a^2b),  \\ \hline
\overline{W}(a^2b^3) & (W(ab^2)\wedge W(a^2b))\otimes W(ab^2),  \\ \hline
\overline{W}(a^1b^4) & (W(b^3)\wedge W(ab^2))\otimes W(a^2b),\quad (W(ab^2)\wedge W(ab^2))\otimes W(ab^2), \\ \hline
\overline{W}(b^5) &  (W(b^3)\wedge W(ab^2))\otimes W(ab^2). \\ \hline 
\end{tabular}
\vspace{0.3cm}

\caption{Modules generating $Im(\tau_3)$.}
\label{table:1}
\end{table}

\renewcommand{\arraystretch}{1.0} 

\section{The image of the handlebody subgroups}

In this section we give a modified version of the Lagrangian trace map $Tr^A_k$ introduced by Faes in \cite[Sec. 4]{faes1}. Then using this trace map together with Table \ref{table:1} we compute the image of the handlebody subgroups $\mathcal{A}_{g,1}(3)$,  $\mathcal{B}_{g,1}(3)$ and $\mathcal{AB}_{g,1}(3)$ by the third Johnson homomorphism $\tau_3$.
Finally, we give a proof of Theorem~\ref{thm:equiv-rel-4}, which is the core result of this work.

\subsection{Lagrangian trace maps}
\label{sec:Lag-trace-maps}

In \cite[Sec. 4]{faes1} Faes introduced a Lagrangian trace map $Tr^A_k$ and in \cite[Thm. A]{faes3} he proved that his map vanishes on $\tau_k(\mathcal{A}_{g,1}(k))$. In particular, in \cite[Thm. 5.1]{faes1} he proved that for $k=2$ the intersection of the kernel of the Lagrangian trace map $Tr^A_2$ with the image of $\tau_2$ completely determines the group $\tau_2(\mathcal{A}_{g,1}(2))$.
We now make explicit his construction in our case of interest, $k=3$.

If we denote
$W_3(a^{\geq 1}b)$ the submodule of $\mathcal{A}_3(H)$ generated by those trees with at least one leaf colored with an $a$, we have a decomposition as $GL_g(\mathbb{Z})$-modules:
\begin{equation}
\label{eq:dec-trees-ab4}
W_3(a^{\geq 1}b)=W(ab^4)\oplus W(a^2b^3)\oplus W(a^3b^2)\oplus W(a^4b)\oplus W(a^5).
\end{equation}
The Lagrangian trace map of Faes given in \cite[Sec. 4]{faes1} takes the form:
\begin{equation}
\label{eq:trace-map-A-Faes}
\begin{tikzcd}[column sep=4mm]
Tr^A_3: W_3(a^{\geq 1}b) \ar[r,"\pi_{14}"] & W(ab^4) \ar[r,"\eta_3"] & A\otimes \mathcal{L}_4(B) \ar[r,"i"] & A\otimes (\otimes^4B) \ar[r,"\omega_{1,2}"] & \otimes^3B \ar[r] & S^3 B,
\end{tikzcd}
\end{equation}
where $\pi_{14}$ is the projection in the first component of the decomposition \eqref{eq:dec-trees-ab4}, $\eta_3$ is the expansion map defined in Section \ref{subsec-tree}, $i$ is the map defined recursively by sending $[c,d]$ to $c\otimes d-d\otimes c$, $\omega_{1,2}$ is the contraction between the first and second variable of $A\otimes (\otimes^4B)$, and the last map is the projection to the third symmetric product of $B$. Moreover, all these maps are $GL_g(\mathbb{Z})$-equivariant and hence the Lagrangian trace map $Tr^A_3$ is also $GL_g(\mathbb{Z})$-equivariant.

Explicitly, let $a \in A$ and $c,d,e,f \in H$, denote by $\overline{c}\in B$ the projection of $c\in H$ on $B$ parallel to $A$.
By the IHX and AS relations we have that
\[
\tree{c,d,a,e,f}=\tree{c,a,d,e,f}+\tree{a,d,c,e,f}
=\tree{c,a,d,e,f}-\tree{d,a,c,e,f}.
\]
Therefore any tree from $W_3(a^{\geq 1}b)$ is a sum of trees of the form $\tree{c,a,d,e,f}$.

On such trees, the Lagrangian trace map $Tr^A_3: W_3(a^{\geq 1}b)\rightarrow S^3B$ is given by:
\[
Tr^A_3\Big(\tree{c,a,d,e,f}\Big) = \omega(a,e)(\overline{c}\;\overline{d}\;\overline{f}) -\omega(a,f)(\overline{c}\;\overline{d}\;\overline{e}).
\]

Contrary to the case $k=2$, it turns out that the image of the third Johnson homomorphism $\tau_3$ and the Lagrangian trace map $Tr^A_3$ are not enough to determine $\tau_3(\mathcal{A}_{g,1}(3))$.
This is due to the fact that the map $Tr^A_3$ is zero on
$\overline{W}_3(a^{\geq 1}b)=W_3(a^{\geq 1}b)\cap Im(\tau_3)$ whereas, by Lemmas \ref{lema:Trace-map-A} and \ref{lema:ses-Trace-map-A}, there are trees in $\overline{W}_3(a^{\geq 1}b)$ that do not belong to $\tau_3(\mathcal{A}_{g,1}(3))$.

To prove the nullity of $Tr^A_3$ on $\overline{W}_3(a^{\geq 1}b)$ notice that by Lemma \ref{lema:gen W_1ab4} and Table \ref{table:1},
\[
\overline{W}_3(a^{\geq 1}b)=\overline{W}_3(a^{\geq 2}b)
+\Gamma_3(W(ab^2))+K,
\]
where $K$ is the submodule of $\overline{W}(ab^4)$ generated by the $\mathfrak{S}_g$-orbit of the tree $\tree{b_1,b_2,b_4,a_4,b_3}$.
Then,
a direct computation shows that the trace map $Tr^A_3$ vanishes on $\overline{W}_3(a^{\geq 2}b)$ and $K$. Moreover, by \cite[Prop. 6.2]{faes1} we know that $\tau_1(\mathcal{TAB}_{g,1}) =W(a^{\geq 1}b^{\geq 1})$ and hence, since $\tau_k$ is a Lie algebra homomorphism, we have inclusions
$\Gamma_3(W(ab^2))\subset \Gamma_3(\tau_1(\mathcal{TAB}_{g,1}))=
\tau_3(\mathcal{TAB}_{g,1}(3))\subset \tau_3(\mathcal{A}_{g,1}(3))$.
Then, by\cite[Thm. A]{faes3}, the map $Tr^A_3$ is zero on $\Gamma_3(W(ab^2))$.

To prove the existence of trees from $\overline{W}_3(a^{\geq 1}b)$ that do not belong to $\tau_3(\mathcal{A}_{g,1}(3))$, as well as to compute $\tau_3(\mathcal{A}_{g,1}(3))$,
we give a modified version of the Lagrangian trace map $Tr^A_3$ defined by Faes in \cite[Sec. 1]{faes3}. This modified version is obtained by taking the definition of the Lagrangian trace map $Tr_3^A$ and replacing the symmetric product $S^3B$ by the exterior product $\Lambda^3 B$. We call this modified trace: the antisymmetric Lagrangian trace map.
To be more precise,
we define the \textit{antisymmetric Lagrangian trace map} $Tr^A_\Lambda$ as the composition of maps:
\begin{equation}
\label{eq:trace-map-A}
\begin{tikzcd}[column sep=4mm]
Tr^A_\Lambda: W_3(a^{\geq 1}b) \ar[r,"\pi_{14}"] & W(ab^4) \ar[r,"\eta_3"] & A\otimes \mathcal{L}_4(B) \ar[r,"i"] & A\otimes (\otimes^4B) \ar[r,"\omega_{1,2}"] & \otimes^3B \ar[r] & \Lambda^3 B.
\end{tikzcd}
\end{equation}
Explicitly, let $a \in A$ and $c,d,e,f \in H$, denote by $\overline{c}\in B$ the projection of $c\in H$ on $B$ parallel to $A$.
Remember that, as before, by the AS and IHX relations, any tree of $W_3(a^{\geq 1}b)$ is a sum of trees of the form $\tree{c,a,d,e,f}$.

On these trees, the antisymmetric trace map $Tr^A_\Lambda: W_3(a^{\geq 1}b)\rightarrow \Lambda^3B$ is given by:
\[
Tr^A_\Lambda\Big(\tree{c,a,d,e,f}\Big) = 2\omega(a,d)(\overline{c}\wedge\overline{e}\wedge\overline{f})+ \omega(a,e)(\overline{c}\wedge\overline{d}\wedge\overline{f}) -\omega(a,f)(\overline{c}\wedge\overline{d}\wedge\overline{e}).
\]

We now prove that the subgroup $\tau_3(\mathcal{A}_{g,1}(3))$ is contained in the kernel of the antisymmetric Lagrangian trace map $Tr^A_\Lambda: W_3(a^{\geq 1}b) \rightarrow \Lambda^3B$.
We first show the following preliminary result:

\begin{lemma}
\label{lemma:W(ab^4)W(a^3)-GL-invairants}
For any given integer $g\geq 6$, the group $Hom(\overline{W}(ab^4)\otimes \Lambda^3 A,\mathbb{Z})^{GL_g(\mathbb{Z})}$ is generated by the map
$\begin{tikzcd}
	\overline{W}(ab^4)\otimes \Lambda^3 A \ar[r, "{Tr^A_\Lambda\otimes id}"]  & \Lambda^3 B \otimes \Lambda^3 A \ar[r,"{\langle\;,\;\rangle}"] & \mathbb{Z},
	\end{tikzcd}$
	where $\langle\;,\;\rangle$ denotes the perfect pairing between $\Lambda^3 B$ and $\Lambda^3 A$ induced by the symplectic form $\omega$.
\end{lemma}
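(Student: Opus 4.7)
The plan is to identify the Hom group via duality and then reduce the computation to the invariant theory of tensor powers of the defining representation of $GL_g(\mathbb{Z})$.

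First, using the perfect pairing $\langle\;,\;\rangle\colon \Lambda^3 A \otimes \Lambda^3 B \to \mathbb{Z}$ induced by the symplectic form $\omega$, I would identify $\Lambda^3 A$ with the integral dual $Hom(\Lambda^3 B, \mathbb{Z})$ as $GL_g(\mathbb{Z})$-modules. By tensor-Hom adjunction, this reduces the statement to showing that the group of $GL_g(\mathbb{Z})$-equivariant maps $Hom(\overline{W}(ab^4), \Lambda^3 B)^{GL_g(\mathbb{Z})}$ is infinite cyclic, generated by the restriction of $Tr^A_\Lambda$.

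Next, I would produce an upper bound on this Hom group. Composing with the expansion map $\eta_3$ of Section \ref{subsec-tree}, which by Lemma \ref{lema:exp-lab} becomes a rational isomorphism, embeds $W(ab^4)\otimes \mathbb{Q}$ into $(A\otimes \mathcal{L}_4(B))\otimes \mathbb{Q}\subset (A\otimes B^{\otimes 4})\otimes \mathbb{Q}$. Any $GL_g(\mathbb{Z})$-equivariant map $A\otimes B^{\otimes 4}\to \Lambda^3 B$ must, by Proposition \ref{prop:chords} applied to the balanced coinvariants of $A^{\otimes 1}\otimes B^{\otimes 4}\otimes A^{\otimes 3}$ (after pairing with $\Lambda^3 A$), be a $\mathbb{Z}$-linear combination of the four elementary maps $\varphi_i$ ($i=1,2,3,4$): contract $A$ with the $i$-th factor of $B^{\otimes 4}$ via $\omega$ and antisymmetrize the remaining three $B$-factors. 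So the rational Hom space is at most four-dimensional.

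The AS and IHX relations defining $\mathcal{A}_3(H)$, together with the Jacobi-type identities coming from $\mathcal{L}_4(B)$, collapse these four maps to a rank-one module. Concretely, as noted just before the statement of the lemma, every tree in $W(ab^4)$ is a sum of trees of the form $\tree{c,a,d,e,f}$ modulo AS and IHX; applying $\varphi_i$'s and insisting on compatibility with these relations forces the three $a$-contractions (with $d$, $e$, and $f$ respectively) to appear with coefficients proportional to $(+2,+1,-1)$, which is precisely the formula defining $Tr^A_\Lambda$ on such trees. Hence any $GL_g(\mathbb{Z})$-equivariant map $\overline{W}(ab^4)\to \Lambda^3 B$ is a rational multiple of $Tr^A_\Lambda|_{\overline{W}(ab^4)}$.

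Finally, to promote ``rank one over $\mathbb{Q}$'' to ``generated over $\mathbb{Z}$ by $Tr^A_\Lambda$'', I would evaluate $\langle Tr^A_\Lambda\otimes id,\cdot\rangle$ on a concrete element, for instance the pair $\tree{b_1,a_4,b_4,b_2,b_3}\otimes (a_1\wedge a_2\wedge a_3)\in \overline{W}(ab^4)\otimes \Lambda^3 A$ (one checks membership in $\overline{W}(ab^4)$ by exhibiting a preimage under $\tau_3$ analogous to the generator used at the end of Section 4.1), and verify that the pairing yields $\pm 1$, confirming that $Tr^A_\Lambda$ is indeed a generator and not a proper multiple. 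The main obstacle will be the third paragraph: carefully unwinding the IHX and AS relations to show that the only combination of the four elementary contraction maps which descends to $\mathcal{A}_3(H)$ is, up to a scalar, the combination $(+2,+1,-1)$ appearing in $Tr^A_\Lambda$. This bookkeeping is analogous in spirit to the tree manipulations in the proofs of Lemmas \ref{lema:trees-a4b2-needed} and \ref{lema:trees-ab4-needed}.
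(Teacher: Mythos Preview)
Your approach is genuinely different from the paper's, and both are valid in principle. The paper never embeds into $A\otimes B^{\otimes 4}$ or invokes classical invariant theory. Instead it works directly with the coinvariants $(\overline{W}(ab^4)\otimes \Lambda^3 A)_{GL_g(\mathbb{Z})}$ via the short exact sequence
\[
0 \longrightarrow \Gamma_3(W(ab^2)) \longrightarrow \overline{W}(ab^4) \longrightarrow Q \longrightarrow 0,
\]
shows by explicit tree manipulations (contraction case analysis and Jacobi/IHX identities) that $(\Gamma_3(W(ab^2))\otimes \Lambda^3 A)_{GL_g(\mathbb{Z})}$ is $3$-torsion, and then uses Lemma~\ref{lema:gen W_1ab4} to see that $(Q\otimes\Lambda^3 A)_{GL_g(\mathbb{Z})}$ is cyclic on the class of $\tree{b_1,b_2,b_4,a_4,b_3}\otimes\tree{a_1,a_2,a_3}$, on which the candidate map evaluates to $1$. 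Your route is more conceptual (representation theory of $GL_g$ over $\mathbb{Q}$ plus an integrality check), while the paper's is more hands-on but entirely elementary and self-contained within the tree calculus already developed.

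That said, your sketch has a real gap exactly where you flag it. Step~4 asserts that the four elementary contractions $\varphi_i$ collapse to a single direction on the image of $W(ab^4)$ in $A\otimes B^{\otimes 4}$, but you do not carry out this computation; saying it is ``analogous in spirit'' to Lemmas~\ref{lema:trees-a4b2-needed} and~\ref{lema:trees-ab4-needed} is not a proof. (The underlying fact is that $\mathcal{L}_4(B)\otimes\mathbb{Q}\cong S_{(3,1)}B\oplus S_{(2,1,1)}B$, while $V\otimes\Lambda^3 V\cong \Lambda^4 V\oplus S_{(2,1,1)}V$, so Schur's lemma gives rank one --- but you would need either this representation-theoretic argument or the explicit IHX/AS bookkeeping you allude to.) A second, smaller point: you bound $Hom(W(ab^4),\Lambda^3 B)^{GL_g}$ and then silently pass to the submodule $\overline{W}(ab^4)$. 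Over $\mathbb{Q}$ this is fine because polynomial $GL_g$-representations are semisimple and restriction of equivariant maps to a summand is surjective, but you should say so; over $\mathbb{Z}$ the passage is not automatic, and it is precisely this integral subtlety that the paper's $3$-torsion computation on $\Gamma_3(W(ab^2))$ is designed to handle.
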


\begin{proof}
There is an isomorphism
\begin{equation}
\label{eq:hom-W(ab^4)W(a^3)-GL}
Hom(\overline{W}(ab^4)\otimes \Lambda^3 A,\mathbb{Z})^{GL_g(\mathbb{Z})}\simeq Hom((\overline{W}(ab^4)\otimes \Lambda^3 A)_{GL_g(\mathbb{Z})},\mathbb{Z}).
\end{equation}
Consider the following short exact sequence:
\[
\begin{tikzcd}
	0  \ar[r]  &\Gamma_3(W(ab^2)) \ar[r]  & \overline{W}(ab^4) \ar[r] & Q  \ar[r]  & 0,
	\end{tikzcd}
	\]
	where $Q=\overline{W}(ab^4)/\Gamma_3(W(ab^2))$.

Taking the tensor product by $\Lambda^3 A$
and $GL_g(\mathbb{Z})$-coinvariants, we get an exact sequence:
\begin{equation}
\label{eq:ses-T(ab4)-K}
\begin{tikzcd}[column sep=3.5mm]
(\Gamma_3(W(ab^2))\otimes \Lambda^3 A)_{GL_g(\mathbb{Z})} \ar[r]  & (\overline{W}(ab^4)\otimes \Lambda^3 A)_{GL_g(\mathbb{Z})} \ar[r] & (Q\otimes \Lambda^3 A)_{GL_g(\mathbb{Z})}  \ar[r]  & 0.
	\end{tikzcd}
\end{equation}

We prove that the group $(\Gamma_3(W(ab^2))\otimes \Lambda^3 A)_{GL_g(\mathbb{Z})}$ is $3$-torsion.
Consider the $GL_g(\mathbb{Z})$-equivariant epimorphism:
\[
(\otimes ^9H)\otimes(\otimes ^3H) \rightarrow
((W(ab^2)\wedge W(ab^2))\otimes W(ab^2))\otimes \Lambda^3 A \rightarrow
\Gamma_3(W(ab^2))\otimes \Lambda^3 A
\]
Taking $GL_g(\mathbb{Z})$-coinvariants we get another epimorphism:
\[
((\otimes ^9H)\otimes(\otimes ^3H))_{GL_g(\mathbb{Z})} \rightarrow (\Gamma_3(W(ab^2))\otimes \Lambda^3 A)_{GL_g(\mathbb{Z})}.
\]
Then, by Proposition \ref{prop:chords}, this last module is generated by the image of balanced basic tensors of $((\otimes^9 H)\otimes (\otimes^3 H))_{GL_g(\mathbb{Z})}$.
Hence, the module $(\Gamma_3(W(ab^2))\otimes \Lambda^3 A)_{GL_g(\mathbb{Z})}$
is generated by the elements of the form:
\begin{equation}
\label{eq:W(ab^4)W(a^3)-gen-0}
\Bigg[\Bigg[\tree{a_i,b_1,b_2}, \tree{a_j,b_3,b_4} \Bigg], \tree{a_k,b_5,b_6}\Bigg]\otimes \tree{a_l,a_m,a_n}, 
\end{equation}
with pair-wise distinct integers $1\leq i,j,k,l,m,n\leq 6$.
Notice that the first bracket is trivially zero unless there is at least one contraction between the first and second trees. Then without loss of generality we have elements of the form:
\begin{equation}
\label{eq:W(ab^4)W(a^3)-gen}
\Bigg[\Bigg[\tree{a_3,b_1,b_2}, \tree{a_i,b_3,b_4} \Bigg], \tree{a_j,b_5,b_6}\Bigg]\otimes \tree{a_k,a_l,a_m}, 
\end{equation}
with pair-wise distinct integers $1\leq 3,i,j,k,l,m\leq 6$. Next, by all possible contractions of $a_i$ and $a_j$ with the other leaves of trees, we show that these elements are $3$-torsion. It is enough to consider the cases $i=2,4,5$.
\vspace{0.3cm}

$\bullet$ For $i=2$, the element \eqref{eq:W(ab^4)W(a^3)-gen} is zero unless $j=1,4$. Without loss of generality, by the AS relation and the action by the permutation $(1,4)(2,3)$, we can assume that $j=4$. Then we have the element
\begin{equation}
\label{eq:W(ab^4)W(a^3)-gen-1}
\begin{aligned}
\Bigg[\Bigg[\tree{a_3,b_1,b_2}, \tree{a_2,b_3,b_4}\Bigg], \tree{a_4,b_5,b_6}\Bigg] \otimes \tree{a_1,a_5,a_6}
& =
 \Bigg[\tree{b_1,b_2,b_4,a_2}-\tree{a_3,b_1,b_3,b_4}, \tree{a_4,b_5,b_6}\Bigg] \otimes \tree{a_1,a_5,a_6}\\
& = \Bigg(\tree{b_1,b_2,a_2,b_5,b_6}-\tree{b_1,a_3,b_3,b_5,b_6}\Bigg)\otimes \tree{a_1,a_5,a_6}.
\end{aligned}
\end{equation}

Consider the element of $\Gamma_3(W(ab^2))\otimes \Lambda^3 A$,
\[
\Bigg[\Bigg[\tree{b_1,a_2,b_2},\tree{a_2,b_3,b_6}\Bigg],\tree{a_6,b_5,b_6}\Bigg]\otimes \tree{a_1,a_5,a_6}=\tree{b_1,a_2,b_3,b_5,b_6}\otimes \tree{a_1,a_5,a_6}.
\]
This element is zero in the coinvariants module because it is not a balanced element, in particular, the subindex $2$ and $3$ only appear once.
Then, taking the action by $G\in GL_g(\mathbb{Z})$ that sends $a_2$ to $a_2+a_3$ and $b_3$ to $b_3-b_2$, we get that, in the coinvariants module,
\[
\begin{aligned}
0=G\Bigg(\tree{b_1,a_2,b_3,b_5,b_6}\otimes \tree{a_1,a_5,a_6}\Bigg)& =
\Bigg(\tree{b_1,a_2,b_2,b_5,b_6}-\tree{b_1,a_3,b_3,b_5,b_6}\Bigg)\otimes \tree{a_1,a_5,a_6} \\
& =
\Bigg(\tree{b_2,a_2,b_1,b_5,b_6}+\tree{b_1,b_2,a_2,b_5,b_6}-\tree{b_1,a_3,b_3,b_5,b_6}\Bigg)\otimes \tree{a_1,a_5,a_6},
\end{aligned}
\]
where in the last equality we used the IHX relation on the first tree of the first variable.

Adding the opposite of this last element in \eqref{eq:W(ab^4)W(a^3)-gen-1} we get that, in the coinvariants module, the element \eqref{eq:W(ab^4)W(a^3)-gen-1} becomes:
\begin{equation}
\label{eq:W(ab^4)W(a^3)-gen-3}
\tree{a_2,b_2,b_1,b_5,b_6}\otimes \tree{a_1,a_5,a_6}.
\end{equation}
We now prove that this element is $3$-torsion in the coinvariants module. By the IHX relation,
\[
\begin{aligned}
0& = \tree{a_2,b_2,b_1,b_5,b_6}\otimes \tree{a_1,a_5,a_6}-\tree{a_2,b_2,b_5,b_1,b_6}\otimes \tree{a_1,a_5,a_6} -\tree{a_2,b_2,b_6,b_5,b_1}\otimes \tree{a_1,a_5,a_6} \\
& = \tree{a_2,b_2,b_1,b_5,b_6}\otimes \tree{a_1,a_5,a_6}+\tree{a_2,b_2,b_5,b_1,b_6}\otimes \tree{a_5,a_1,a_6} +\tree{a_2,b_2,b_6,b_5,b_1}\otimes \tree{a_6,a_5,a_1}
\end{aligned}
\]
where in the last equality we used the AS relation in the second variable of the last two elements.
Notice that the second and third elements are respectively the result of the action by the permutations $(1,5)$ and $(1,6)$ on the first element.
Therefore, in the coinvariants module, we have that:
\[
3\tree{a_2,b_2,b_1,b_5,b_6}\otimes \tree{a_1,a_5,a_6}=0.
\]

\vspace{0.3cm}

$\bullet$ For $i=4$, the element \eqref{eq:W(ab^4)W(a^3)-gen} is zero unless $j=1,2$. By the AS relation, without loss of generality we can assume $j=2$. 
Then we have the element
\[
\begin{aligned}
\Bigg[\Bigg[\tree{a_3,b_1,b_2}, \tree{a_4,b_3,b_4}\Bigg], \tree{a_2,b_5,b_6}\Bigg] \otimes \tree{a_1,a_5,a_6} & =
\Bigg[\tree{b_1,b_2,b_4,a_4}, \tree{a_2,b_5,b_6}\Bigg] \otimes \tree{a_1,a_5,a_6}
\\
& =-\tree{b_4,a_4,b_1,b_5,b_6} \otimes \tree{a_1,a_5,a_6},
\end{aligned}
\]
which, by the AS relation, is in the $\mathfrak{S}_g$-orbit of \eqref{eq:W(ab^4)W(a^3)-gen-3} and hence it is also $3$-torsion.
\vspace{0.3cm}

$\bullet$ For $i=5$, the element \eqref{eq:W(ab^4)W(a^3)-gen} is zero unless $j=1,2,4,6$. Then we have elements of the following form:
\begin{equation*}
\Bigg[\Bigg[\tree{a_3,b_1,b_2}, \tree{a_5,b_3,b_4}\Bigg], \tree{a_j,b_5,b_6}\Bigg]\otimes \tree{a_k,a_l,a_m}.
\end{equation*}
Notice that for $j=4,6$, by the Jacobi identity, since the first tree of the bracket does not have any contraction with the third tree of the bracket, this element is equal to
\[
\Bigg[\Bigg[ \tree{a_5,b_3,b_4}, \tree{a_j,b_5,b_6}\Bigg], \tree{a_3,b_1,b_2}\Bigg]\otimes \tree{a_k,a_l,a_m},
\]
which is an element of the form given for $i=2,4$.
Therefore we are only left with $j=1,2$ and by the AS relation we can assume $j=2$. In this case we have the element:
\[
\Bigg[\Bigg[\tree{a_3,b_2,b_1}, \tree{a_5,b_3,b_4}\Bigg], \tree{a_2,b_5,b_6}\Bigg]\otimes \tree{a_1,a_4,a_6}.
\]
By the Jacobi identity the following sum of elements is zero:
\[
\begin{array}{c}
\Bigg[\Bigg[\tree{a_3,b_2,b_1}, \tree{a_5,b_3,b_4}\Bigg], \tree{a_2,b_5,b_6}\Bigg]\otimes \tree{a_1,a_4,a_6}  +\Bigg[\Bigg[\tree{a_2,b_5,b_6},\tree{a_3,b_2,b_1}\Bigg], \tree{a_5,b_3,b_4} \Bigg]\otimes \tree{a_1,a_4,a_6} \\
+\Bigg[\Bigg[\tree{a_5,b_3,b_4},\tree{a_2,b_5,b_6}\Bigg], \tree{a_3,b_2,b_1} \Bigg]\otimes \tree{a_1,a_4,a_6}.
\end{array}
\]
Then taking the action by the permutation $(1,4,6)(2,3,5)$ in the second summand and $(1,6,4)(2,5,3)$ in the third summand we get that,
in the coinvariants module, 
\[
3\Bigg[\Bigg[\tree{a_3,b_2,b_1}, \tree{a_5,b_3,b_4}\Bigg], \tree{a_2,b_5,b_6}\Bigg]\otimes \tree{a_1,a_4,a_6}=0.
\]
\vspace{0.3cm}

This shows that the group $(\Gamma_3(W(ab^2))\otimes \Lambda^3 A)_{GL_g(\mathbb{Z})}$ is $3$-torsion.

Therefore, any homomorphism of $Hom((\overline{W}(ab^4)\otimes \Lambda^3 A)_{GL_g(\mathbb{Z})},\mathbb{Z})$ is zero when restricted to $(\Gamma_3(W(ab^2))\otimes \Lambda^3 A)_{GL_g(\mathbb{Z})}$ and hence it factors through the module $(Q\otimes \Lambda^3 A)_{GL_g(\mathbb{Z})}$, where $Q=\overline{W}(ab^4)/\Gamma_3(W(ab^2))$.

We now show that $(Q\otimes \Lambda^3 A)_{GL_g(\mathbb{Z})}$ is generated by a single element.
By Lemma \ref{lema:gen W_1ab4} there is an $GL_g(\mathbb{Z})$-equivariant epimorphism:
\[
\begin{tikzcd}
\otimes^9 H \ar[r]
& (W(a^3)\wedge W(ab^2))\otimes W(a^2b) \ar[r, "{[[\;,\;],\;]}"] & Q.
\end{tikzcd}
\]
This induces another $GL_g(\mathbb{Z})$-equivariant epimorphism,
$(\otimes^9 H)\otimes (\otimes^3 H) \longrightarrow Q \otimes \Lambda^3 A$,
which, after taking $GL_g(\mathbb{Z})$-coinvariants, becomes:
\[
((\otimes^9 H)\otimes (\otimes^3 H))_{GL_g(\mathbb{Z})} \longrightarrow (Q \otimes \Lambda^3 A)_{GL_g(\mathbb{Z})}.
\]
Then, by Proposition \ref{prop:chords}, this last module is generated by the image of balanced basic tensors of $((\otimes^9 H)\otimes (\otimes^3 H))_{GL_g(\mathbb{Z})}$. As a consequence, since $\Lambda^3 A$ is generated by the $\mathfrak{S}_{g}$-orbit of $\tree{a_1,a_2,a_3}$, and, by Lemma \ref{lema:gen W_1ab4}, $Q$ is generated by the $\mathfrak{S}_{g}$-orbit of $\tree{b_1,b_2,b_4,a_4,b_3}$, the module $(Q\otimes \Lambda^3 A)_{GL_g(\mathbb{Z})}$ is generated by the following element:
\[
\tree{b_1,b_2,b_4,a_4,b_3}\otimes \tree{a_1,a_2,a_3}.
\]

Finally, consider the map
\[
\begin{tikzcd}
	\overline{W}(ab^4)\otimes \Lambda^3 A \ar[r, "{Tr^A_\Lambda\otimes id}"]  & \Lambda^3 B \otimes \Lambda^3 A \ar[r,"{\langle\;,\;\rangle}"] & \mathbb{Z}.
	\end{tikzcd}
\]
This map is a well-defined $GL_g(\mathbb{Z})$-invariant homomorphism. Moreover, a direct computation shows that its image on the element $\tree{b_1,b_2,b_4,a_4,b_3}\otimes \tree{a_1,a_2,a_3}$ is $1$. Then the group $Hom(\overline{W}(ab^4)\otimes \Lambda^3 A,\mathbb{Z})^{GL_g(\mathbb{Z})}$ is generated by the map
$\langle Tr^A_\Lambda, id \rangle$.

\end{proof}

\begin{lemma}
\label{lema:Trace-map-A}
For any given integer $g\geq 6$, the subgroup $\tau_3(\mathcal{A}_{g,1}(3))$ is contained in the kernel of the antisymmetric Lagrangian trace map $Tr^A_\Lambda: W_3(a^{\geq 1}b) \rightarrow \Lambda^3 B$.
\end{lemma}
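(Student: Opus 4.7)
My strategy is to adapt Faes's proof of the analogous vanishing statement for the symmetric Lagrangian trace $Tr^A_3$, using in an essential way the description of the $GL_g(\mathbb{Z})$-invariant functionals obtained in Lemma~\ref{lemma:W(ab^4)W(a^3)-GL-invairants}. Recall that $Tr^A_3$ vanishes on all of $\overline{W}_3(a^{\geq 1}b)$ and so gives no information; by contrast, $Tr^A_\Lambda$ is (up to $3$-torsion) the universal $GL_g(\mathbb{Z})$-invariant functional on $\overline{W}(ab^4)\otimes \Lambda^3 A$, so the vanishing on $\tau_3(\mathcal{A}_{g,1}(3))$ is a genuine structural constraint.

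First, I would reduce to the $W(ab^4)$ component. Since $Tr^A_\Lambda$ factors through the projection $\pi_{14}\colon W_3(a^{\geq 1}b)\to W(ab^4)$, it suffices to prove that $\pi_{14}(\tau_3(\mathcal{A}_{g,1}(3)))$ lies in the kernel of the remaining composite $W(ab^4)\to \Lambda^3 B$. Next I would show that $\pi_{14}(\tau_3(\mathcal{A}_{g,1}(3)))$ is a $GL_g(\mathbb{Z})$-invariant submodule of $\overline{W}(ab^4)$: indeed $\mathcal{AB}_{g,1}\subset \mathcal{A}_{g,1}$ normalises $\mathcal{A}_{g,1}(3) = \mathcal{A}_{g,1}\cap \mathcal{M}_{g,1}(3)$ and surjects onto $GL_g(\mathbb{Z})$ via the symplectic representation; since $\tau_3$ is equivariant for the induced conjugation action, the claim follows, and $GL_g(\mathbb{Z})$-invariance is preserved by the equivariant projection $\pi_{14}$.

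Second, I would translate the vanishing of $Tr^A_\Lambda$ on $\pi_{14}(\tau_3(\mathcal{A}_{g,1}(3)))$ into the statement that the $GL_g(\mathbb{Z})$-invariant pairing
\[
\Phi := \langle Tr^A_\Lambda(\cdot),\cdot\rangle\colon \overline{W}(ab^4)\otimes \Lambda^3 A \longrightarrow \mathbb{Z}
\]
vanishes on $\pi_{14}(\tau_3(\mathcal{A}_{g,1}(3)))\otimes \Lambda^3 A$. By Lemma~\ref{lemma:W(ab^4)W(a^3)-GL-invairants} and its proof, the $GL_g(\mathbb{Z})$-coinvariants of $\overline{W}(ab^4)\otimes \Lambda^3 A$ are generated modulo the $3$-torsion contribution from $(\Gamma_3(W(ab^2))\otimes \Lambda^3 A)_{GL_g(\mathbb{Z})}$ by the single class $[\,\tree{b_1,b_2,b_4,a_4,b_3}\otimes \tree{a_1,a_2,a_3}\,]$. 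So I only need to verify the vanishing on (a)~the piece coming from $\Gamma_3(W(ab^2))$, and (b)~a representative for the distinguished class lifted from $\tau_3(\mathcal{A}_{g,1}(3))$.

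For (a), the inclusion $\Gamma_3(W(ab^2))\subset \tau_3(\mathcal{TAB}_{g,1}(3))\subset \tau_3(\mathcal{A}_{g,1}(3))$ reduces the question to a direct computation: using the explicit formula for $Tr^A_\Lambda$ and the Lie-algebra structure of the expansion map, the image of any triple bracket of elements of $W(ab^2)$ produces, after contraction with a fixed $\alpha\in \Lambda^3 A$, a trilinear alternating expression in three elements of $B$ that is forced to be symmetric by the common label type, hence zero. For (b), I would construct an explicit element $\phi\in \mathcal{TA}_{g,1}(3)\subset \mathcal{A}_{g,1}(3)$ whose image under $\pi_{14}\circ \tau_3$ represents the given class modulo $\Gamma_3(W(ab^2))$ and verify $Tr^A_\Lambda$ on it by hand. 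The main obstacle will be carrying out this last verification cleanly: producing the right $\phi$ (which exists thanks to $g\geq 6$ and the description of $\tau_1(\mathcal{TA}_{g,1})$ due to Morita--Faes) and bookkeeping all AS/IHX rewritings so that the resulting element of $\Lambda^3 B$ is manifestly zero.
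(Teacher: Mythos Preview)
Your step~(b) has a genuine circularity that makes the whole strategy fail. You propose to find $\phi\in\mathcal{TA}_{g,1}(3)$ with $\pi_{14}(\tau_3(\phi))$ representing the distinguished class $\tree{b_1,b_2,b_4,a_4,b_3}$ modulo $\Gamma_3(W(ab^2))$, and then to check by hand that $Tr^A_\Lambda$ vanishes on it. But no such $\phi$ exists: since $\tau_1(\mathcal{TA}_{g,1})=W(a^{\geq 1}b)=W(a^3)\oplus W(a^2b)\oplus W(ab^2)$ and the only triple bracket of these landing in $W(ab^4)$ is $[[W(ab^2),W(ab^2)],W(ab^2)]$, one has $\pi_{14}\big(\tau_3(\mathcal{TA}_{g,1}(3))\big)=\Gamma_3(W(ab^2))$ exactly, so every such $\phi$ represents \emph{zero} in $Q$. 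If instead you try to lift from the larger group $\mathcal{A}_{g,1}(3)$, you are assuming knowledge of $\pi_{14}(\tau_3(\mathcal{A}_{g,1}(3)))$, which is precisely what the lemma (together with Lemma~\ref{lema:ses-Trace-map-A}) is meant to determine. And if such a lift did exist, the computation would give $Tr^A_\Lambda\neq 0$ (the pairing evaluates to $1$ on the generator, as shown at the end of Lemma~\ref{lemma:W(ab^4)W(a^3)-GL-invairants}), contradicting the statement. In short, your plan never supplies a reason why the image of $\tau_3(\mathcal{A}_{g,1}(3))$ in $Q$ should be trivial; the $GL_g(\mathbb{Z})$-invariance you set up is correct but does not by itself force vanishing on the rank-one quotient.

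What the paper does instead is to bring in an \emph{external topological} constraint. It identifies, via Lemma~\ref{lemma:W(ab^4)W(a^3)-GL-invairants}, the pairing $\langle Tr^A_\Lambda(\cdot),\cdot\rangle$ with $\tfrac{1}{12}\,q\circ[\,\cdot\,,\,\cdot\,]$, where $q$ is induced on $\mathrm{Im}(\tau_4)$ by the integral homology-sphere invariant $F=\lambda_2-18\lambda^2+3\lambda$. For $\varphi\in\mathcal{A}_{g,1}(3)$ and $\psi\in\mathcal{TA}_{g,1}$ one has $[\varphi,\psi]\in\mathcal{A}_{g,1}(4)$, and since elements of $\mathcal{A}_{g,1}$ extend to the handlebody, $F_g([\varphi,\psi])=F(\mathbb{S}^3)=0$. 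Pairing with all of $\Lambda^3 A=\tau_1(\mathcal{TA}_{g,1})$ then forces $Tr^A_\Lambda(\tau_3(\varphi))=0$. This topological input---the vanishing of a specific finite-type invariant on the handlebody subgroup---is the missing idea; it cannot be replaced by bookkeeping with AS/IHX alone.
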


\begin{proof}
To prove the statement we relate the antisymmetric Lagrangian trace $Tr^A_\Lambda$ to the invariant $F=\lambda_2-18\lambda^2+3\lambda$, where $\lambda$ denotes the Casson invariant and $\lambda_2$ the second Ohtsuki invariant.
We first recall some facts about the invariant $F$.
In \cite[Thm. 1.4]{PR} and \cite[Prop 5.1]{PR}, we proved that the invariant $F$ induces a family of functions $F_g$ on $\mathcal{M}_{g,1}(2)$ that are $\mathcal{AB}_{g,1}$-invariant homomorphisms when restricted to $\mathcal{M}_{g,1}(4)$ and vanish on $\mathcal{M}_{g,1}(5)$.
Hence, the map $F_{\mid \mathcal{M}_{g,1}(4)}$ factors through the fourth Johnson homomorphism $\tau_4$, inducing a $GL_g(\mathbb{Z})$-invariant homomorphism
$q: Im(\tau_4)\rightarrow \mathbb{Z}$.

We show that there is a commutative diagram of $\mathcal{AB}_{g,1}$-equivariant maps:
\begin{equation}
\label{eq:com-diag-inv-q}
\begin{tikzcd}
\mathcal{M}_{g,1}(3)\times \mathcal{TA}_{g,1} \ar[r, "{[\;,\;]}"]
\ar[d, "\tau_3\times\tau_1"]  & \mathcal{M}_{g,1}(4)
\ar[d, "\tau_4"] \ar[rd, "F_g"] & \\
	Im(\tau_3)\otimes \Lambda^3 A \ar[r, "{[\;,\;]}"] \ar[d,"\pi\otimes id"]  & Im(\tau_4) \ar[r, "q"] & \mathbb{Z} \\
	\overline{W}_3(a^{\geq 1}b)\otimes \Lambda^3 A
	\ar[r,"{Tr^A_{\Lambda}\otimes id}"] &
	\Lambda^3 B \otimes \Lambda^3 A \ar[ru,"{12\langle \;,\;\rangle}"'] &
	\end{tikzcd}
\end{equation}
where $\pi$ denotes the projection of $Im(\tau_4)$ to $\overline{W}_3(a^{\geq 1}b)$, and $\langle \;,\;\rangle$ is the perfect paring induced by the symplectic form $\omega$. Then we prove that $\tau_3(\mathcal{A}_{g,1}(3))$
is contained in the kernel of the map $Tr^A_\Lambda:W_3(a^{\geq 1}b) \rightarrow \Lambda^3 B$.

We now show the commutativity of diagram \eqref{eq:com-diag-inv-q}.
The commutative triangle in \eqref{eq:com-diag-inv-q} is given by construction of $q$, and the commutative square in \eqref{eq:com-diag-inv-q} is a consequence of the fact that $\tau_k$ is a homomorphism of Lie algebras.
It only remains to show the commutativity of the bottom pentagonal diagram in \eqref{eq:com-diag-inv-q}, ie. $q\circ [\;,\;]=12\langle Tr_\Lambda^A\circ \pi, id\rangle$.

By construction, the maps $q\circ [\;,\;]$ and $\langle Tr_\Lambda^A\circ \pi, id\rangle$ are $GL_g(\mathbb{Z})$-invariant homomorphisms and hence elements of the group
\[
Hom(Im(\tau_3)\otimes \Lambda^3 A,\mathbb{Z})^{GL_g(\mathbb{Z})}
\simeq Hom((Im(\tau_3)\otimes \Lambda^3 A)_{GL_g(\mathbb{Z})},\mathbb{Z}).
\]
We show that this last group is generated by $\langle Tr_\Lambda^A\circ \pi, id\rangle$.

Consider the following composition of $GL_g(\mathbb{Z})$-equivariant epimorphisms:
\[
\begin{tikzcd}
\otimes^9 H \ar[r]
& ((\Lambda^3 H)\wedge(\Lambda^3 H))\otimes (\Lambda^3 H) \ar[r, "{[[\;,\;],\;]}"] & Im(\tau_3),
\end{tikzcd}
\]
where the last surjection is given by the fact that $\tau_1(\mathcal{T}_{g,1})=\Lambda^3 H$, that $\tau_k$ is a Lie algebra homomorphism and $Im(\tau_3)=\tau_3([[\mathcal{T}_{g,1},\mathcal{T}_{g,1}],\mathcal{T}_{g,1}])$ by \cite[Cor. 4.6]{FMS}. This induces another $GL_g(\mathbb{Z})$-equivariant epimorphism,
\[
(\otimes^9 H)\otimes (\otimes^3 H) \longrightarrow Im(\tau_3)\otimes \Lambda^3 A,
\]
which, after taking $GL_g(\mathbb{Z})$-coinvariants, becomes:
\[
((\otimes^9 H)\otimes (\otimes^3 H))_{GL_g(\mathbb{Z})}
\longrightarrow (Im(\tau_3)\otimes \Lambda^3 A)_{GL_g(\mathbb{Z})}.
\]
Then, by Proposition \ref{prop:chords}, this last module is generated by the image of balanced basic tensors of $((\otimes^9 H)\otimes (\otimes^3 H))_{GL_g(\mathbb{Z})}$ and, by Proposition \ref{prop:desc-m(3)}, we get an isomorphism:
\[
(Im(\tau_3)\otimes \Lambda^3 A)_{GL_g(\mathbb{Z})}\simeq (\overline{W}(ab^4)\otimes \Lambda^3 A)_{GL_g(\mathbb{Z})}.
\]
Hence,
\[
Hom((Im(\tau_3)\otimes \Lambda^3 A)_{GL_g(\mathbb{Z})},\mathbb{Z})\simeq
Hom((\overline{W}(ab^4)\otimes \Lambda^3 A)_{GL_g(\mathbb{Z})},\mathbb{Z}),
\]
where, by Lemma \ref{lemma:W(ab^4)W(a^3)-GL-invairants}, the first group is generated by $\langle Tr_\Lambda^A\circ \pi, id\rangle$.
Therefore, there is an integer $k$ such that $q\circ [\;,\;]=k\langle Tr_\Lambda^A\circ \pi, id\rangle$.

We now show that $k=12$, by computing the image of the maps $q\circ [\;,\;]$ and $\langle Tr_\Lambda^A\circ \pi, id\rangle$ on the element
$T_1\otimes T_2\in Im(\tau_3)\otimes \Lambda^3 A$, where,
\[
T_1=\Bigg[\Bigg[\tree{b_2,b_1,b_5},\tree{a_5,a_1,b_6}\Bigg],\tree{a_6,b_3,b_4}\Bigg] \quad
\text{and} \quad T_2=-\tree{a_4,a_3,a_2}.
\]

We first compute $q([T_1, T_2])$.
Since $\tau_k$ is a Lie algebra homomorphism and it is surjective for $k=1$, there exist $\varphi\in \mathcal{T}_{g,1}(3)$ and $\xi\in \mathcal{TA}_{g,1}$ such that $\tau_3(\varphi)=T_1$ and $\tau_1(\xi)=T_2$.
Then, by commutative diagram \eqref{eq:com-diag-inv-q} and the Jacobi identity,
\[
\begin{aligned}
{[}T_1,T_2]=\tau_4([\varphi, \xi])
= &
-\Bigg[\Bigg[\Bigg[\tree{b_2,b_1,b_5},\tree{a_5,a_1,b_6}\Bigg],\tree{a_6,b_3,b_4}\Bigg], \tree{a_4,a_3,a_2}\Bigg] \\
= & \; \Bigg[\Bigg[\tree{a_6,b_3,b_4},\tree{a_4,a_3,a_2}\Bigg], \Bigg[\tree{b_2,b_1,b_5},\tree{a_5,a_1,b_6}\Bigg]\Bigg] \\
& +\Bigg[\Bigg[\tree{a_4,a_3,a_2},\Bigg[\tree{b_2,b_1,b_5},\tree{a_5,a_1,b_6}\Bigg]\Bigg],\tree{a_6,b_3,b_4}\Bigg] \\
= & \; \Bigg[\Bigg[\tree{a_6,b_3,b_4},\tree{a_4,a_3,a_2}\Bigg], \Bigg[\tree{b_2,b_1,b_5},\tree{a_5,a_1,b_6}\Bigg]\Bigg] \\
& -\Bigg[\Bigg[\tree{a_4,a_3,b_2},\Bigg[\tree{a_2,b_1,b_5},\tree{a_5,a_1,b_6}\Bigg]\Bigg],\tree{a_6,b_3,b_4}\Bigg].
\end{aligned}
\]
where in the last equality we exchanged $a_2$ and $b_2$ in the second summand.
Notice that the second summand in the last equality belongs to 
$\Gamma_4(W(a^{\geq 1}b^{\geq 1}))\subseteq\tau_4(\mathcal{TAB}_{g,1}(4))$,
where this last inclusion comes from the fact that $\tau_k$ is a Lie albegra homomorphism and $\tau_1(\mathcal{TAB}_{g,1})=W(a^{\geq 1}b^{\geq 1})$, by \cite[Prop. 6.2]{faes1}.
Then, since $F_g$ is an invariant of integral homology spheres, $F_g$ it is zero on $\mathcal{TAB}_{g,1}(4)$ and, by commutative diagram \eqref{eq:com-diag-inv-q}, the map $q$ is zero on the second summand. Hence,
\[
q([T_1,T_2])=q\Bigg(\Bigg[\Bigg[\tree{a_6,b_3,b_4},\tree{a_4,a_3,a_2}\Bigg], \Bigg[\tree{b_2,b_1,b_5},\tree{a_5,a_1,b_6}\Bigg]\Bigg]\Bigg).
\]
Since $\tau_k$ is a Lie algebra homomorphism and, by \cite[Lem. 2.5]{mor},
$\tau_1(\mathcal{TA}_{g,1}) =W(a^{\geq 1}b)$ and 
$\tau_1(\mathcal{TB}_{g,1}) =W(ab^{\geq 1})$,
then there exist $\psi_1\in \mathcal{TA}_{g,1}(2)$ and $\psi_2\in \mathcal{TB}_{g,1}(2)$ such that
\[
\tau_2(\psi_1)=\Bigg[\tree{a_6,b_3,b_4},\tree{a_4,a_3,a_2}\Bigg] \quad 
\text{and} \quad
\tau_2(\psi_2)=\Bigg[\tree{b_2,b_1,b_5},\tree{a_5,a_1,b_6}\Bigg].
\]
Therefore we have the following equalities:
\begin{equation}
\label{eq:image-q}
q([T_1,T_2])=q([\tau_2(\psi_1),\tau_2(\psi_2)]) = q\circ \tau_4([\psi_1,\psi_2])=
F_g([\psi_1,\psi_2]).
\end{equation}

To compute $F_g([\psi_1,\psi_2])$ we use the $2$-cocycle $C_g$ associated to the invariant $F_g$, that is,
\[
C_g(\phi,\psi)=F_g(\phi)+F_g(\psi)-F_g(\phi\psi),\qquad \forall\;\phi,\psi\in \mathcal{M}_{g,1}(2).
\]
By \cite[Sec. 3]{PR} this $2$-cocycle is $\mathcal{AB}_{g,1}$-invariant and zero on 
$\mathcal{A}_{g,1}(2)\times \mathcal{M}_{g,1}(2)$ and
$\mathcal{M}_{g,1}(2)\times \mathcal{B}_{g,1}(2)$. Moreover, by \cite[Sec. 6]{PR}, this 2-cocycle is the 
pull-back along the second Johnson homomorphism $\tau_2$ of a
bilinear form $B_g$ on $\mathcal{A}_{2}(H)$, which by \cite[Prop. 6.3]{PR}, is given by:
\[
B_g= 48J_g+ 12Q_g,
\]
where $J_g$ and $Q_g$ are $GL_g(\mathbb{Z})$-invariant bilinear forms on $\mathcal{A}_2(H)$ that respectively factor through $W(b^4)\otimes W(a^4)$ and $W(ab^3)\otimes W(a^3b)$. By direct computation,
\[
J_g\Bigg(\tree{b_1,b_2,b_3,b_4}, \tree{a_1,a_2,a_3,a_4}\Bigg)=1\quad \text{and}
\quad Q_g\Bigg(\tree{b_1,a_3,b_3,b_2}, \tree{a_1,a_4,b_4,a_2}\Bigg)=1.
\]

Then, we compute that
\begin{equation*}
\begin{aligned}
F_g([\psi_1,\psi_2]) & = F_g(\psi_1\psi_2)-F_g(\psi_2\psi_1)-C_g(\psi_1\psi_2,[\psi_1,\psi_2]) \\
& = C_g(\psi_1,\psi_2)-C_g(\psi_2,\psi_1)\\
& = -C_g(\psi_2,\psi_1),
\end{aligned}
\end{equation*}
where,
\[
C_g(\psi_2,\psi_1)=B_g(\tau_2(\psi_2),\tau_2(\psi_1))=B_g\Bigg(\tree{b_2,b_1,a_1,b_6}+\tree{b_2,b_5,a_5,b_6},\tree{a_6,b_3,a_3,a_2}+\tree{a_6,b_4,a_4,a_2}\Bigg)= 48.
\]
Hence, by \eqref{eq:image-q},
\begin{equation}
\label{eq:left-map}
q([T_1, T_2])=-48.
\end{equation}

We now compute $\langle Tr_\Lambda^A(\pi(T_1)), T_2\rangle$.
By construction we have that:
\[
\begin{aligned}
T_1=\Bigg[\Bigg[\tree{b_2,b_1,b_5},\tree{a_5,a_1,b_6}\Bigg],\tree{a_6,b_3,b_4}\Bigg]
& =\Bigg[-\tree{b_2,b_1,a_1,b_6}-\tree{b_2,b_5,a_5,b_6},\tree{a_6,b_3,b_4}\Bigg] \\
& = \tree{b_2,b_1,a_1,b_3,b_4}+\tree{b_2,b_5,a_5,b_3,b_4}.
\end{aligned}
\]
Then we compute that $Tr_\Lambda^A(\pi(T_1))= 4\tree{b_2,b_3,b_4}$ and hence,
\begin{equation}
\label{eq:right-map}
\langle Tr_\Lambda^A(\pi(T_1)), T_2\rangle=-4\Bigg\langle \tree{b_2,b_3,b_4}, \tree{a_4,a_3,a_2}\Bigg\rangle=-4.
\end{equation}

Therefore, from \eqref{eq:left-map} and \eqref{eq:right-map} we get that $k=12$ and hence,
\[
q\circ [\;,\;]=12\langle Tr_\Lambda^A\circ \pi, id\rangle.
\]
This finishes the proof of the commutativity of diagram \eqref{eq:com-diag-inv-q}.

Finally, we show that $\tau_3(\mathcal{A}_{g,1}(3))$
is contained in the kernel of the antisymmetic Lagrangian trace map $Tr^A_\Lambda:W_3(a^{\geq 1}b) \rightarrow \Lambda^3 B$.

By \cite[Sec. 4]{faes1}, we have an inclusion
\[
\tau_3(\mathcal{A}_{g,1}(3))\subseteq \tau_3(\mathcal{M}_{g,1}(3)) \cap
Ker(D_3(H)\rightarrow D_3(H/A)).
\]
Hence,
$\tau_3(\mathcal{A}_{g,1}(3))\subseteq W_3(a^{\geq 1}b)$.
We now prove that, indeed, $\tau_3(\mathcal{A}_{g,1}(3))\subseteq Ker(Tr^A_\Lambda)$.

Given an element $S\in \tau_3(\mathcal{A}_{g,1}(3))$, such that
\[Tr_\Lambda^A(S)=\sum_{1\leq i<j<k \leq g}\lambda_{ijk}\tree{b_i,b_j,b_k},
\quad \text{with} \quad \lambda_{ijk}\in \mathbb{Z}.
\]
We show that $\lambda_{ijk}=0$ for all integers
$1\leq i<j<k \leq g$.

For each triple of integers $1\leq i'<j'<k'\leq g$ consider the element $T=\tree{a_{i'},a_{j'},a_{k'}}\in \Lambda^3 A$.
Since $S\in \tau_3(\mathcal{A}_{g,1}(3))$ and $\tau_1(\mathcal{TA}_{g,1})=\Lambda^3 A$, there exist
$\varphi \in \mathcal{A}_{g,1}(3)$ and $\psi \in \mathcal{TA}_{g,1}$ such that $\tau_3(\varphi)=S$ and $\tau_1(\psi)=T$. Then $[\varphi, \psi]\in \mathcal{A}_{g,1}(4)$
and, since the map $F_g$ is an invariant of integral homology spheres, $F_g$ is zero on this bracket:
\[
0=F_g([\varphi, \psi]) = q([\tau_3(\phi),\tau_1(\psi)])=q([S,T]).
\]
This equality and the commutative diagram \eqref{eq:com-diag-inv-q} gives us the following equalities:
\[
0=
12\langle Tr_\Lambda^A(\pi(S)),T \rangle =  12\langle Tr_\Lambda^A(S),T \rangle  =  12\sum_{i<j<k}\lambda_{ijk}\Bigg\langle \tree{b_i,b_j,b_k},\tree{a_{i'},a_{j'},a_{k'}}\Bigg\rangle  =  -12\lambda_{i'j'k'}.
\]
Therefore $\lambda_{i'j'k'}=0$ for all triples $i'<j'<k'$ and hence $Tr^A_\Lambda(S)=0$.
\end{proof}

We compute the image and the kernel of the antisymmetric Lagrangian trace map $Tr^A_\Lambda$.

\begin{lemma}
\label{lema:ses-Trace-map-A}
For any integer $g\geq 6$, the trace map $Tr^A_\Lambda$
induces a short exact sequence:
\[
\begin{tikzcd}
0 \ar[r] & \overline{W}_3(a^{\geq 2}b)\oplus \Gamma_3(W(ab^2)) \ar[r]  & \overline{W}_3(a^{\geq 1}b) \ar[r,"{Tr^A_\Lambda}"] & 2\Lambda^3 B \ar[r] & 0.
\end{tikzcd}
\]
\end{lemma}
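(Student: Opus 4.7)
The plan is to reduce all three exactness statements to a single computation on the quotient $\overline{W}(ab^4)/\Gamma_3(W(ab^2))$. First I would use the direct sum decomposition from Proposition~\ref{prop:desc-m(3)}, which yields $\overline{W}_3(a^{\geq 1}b) = \overline{W}_3(a^{\geq 2}b) \oplus \overline{W}(ab^4)$. Since $\Gamma_3(W(ab^2)) \subseteq \overline{W}(ab^4)$ while $\overline{W}_3(a^{\geq 2}b)$ consists of trees with at least two $a$-colored leaves, the submodules $\overline{W}_3(a^{\geq 2}b)$ and $\Gamma_3(W(ab^2))$ have trivial intersection, which gives exactness at the left. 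For vanishing on their sum, the trace map begins with the projection $\pi_{14}$ onto $W(ab^4)$ and hence kills $\overline{W}_3(a^{\geq 2}b)$; moreover, by the discussion preceding Lemma~\ref{lema:Trace-map-A}, the identity $\tau_1(\mathcal{TAB}_{g,1})=W(a^{\geq 1}b^{\geq 1})$ together with $\tau$ being a Lie algebra homomorphism yields $\Gamma_3(W(ab^2))\subseteq \tau_3(\mathcal{TAB}_{g,1}(3))\subseteq \tau_3(\mathcal{A}_{g,1}(3))$, so Lemma~\ref{lema:Trace-map-A} gives vanishing on this summand as well.

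Next I would compute the image. The $a\leftrightarrow b$ analog of Lemma~\ref{lema:gen W_1ab4}, applied to the generating modules of $\overline{W}(ab^4)$ listed in Table~\ref{table:1}, shows that $\overline{W}(ab^4)/\Gamma_3(W(ab^2))$ is generated as a $GL_g(\mathbb{Z})$-module by the class of $\tree{b_1,b_2,b_4,a_4,b_3}$. Rewriting this tree via the AS relation as $-\tree{b_3,a_4,b_4,b_2,b_1}$, which puts the unique $a$-leaf in position~$2$, the defining formula for $Tr^A_\Lambda$ yields $Tr^A_\Lambda(\tree{b_1,b_2,b_4,a_4,b_3}) = 2(b_1\wedge b_2\wedge b_3)$. $GL_g(\mathbb{Z})$-equivariance of $Tr^A_\Lambda$ then shows that its image contains $2\Lambda^3 B$; since the previous step already kills the complementary summand, the image is exactly $2\Lambda^3 B$, yielding exactness on the right.

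It remains to establish that $Ker(Tr^A_\Lambda)\cap\overline{W}(ab^4)=\Gamma_3(W(ab^2))$, equivalently that the induced map $\overline{Tr^A_\Lambda}\colon\overline{W}(ab^4)/\Gamma_3(W(ab^2))\to 2\Lambda^3 B$ is an isomorphism. Surjectivity is just the image computation. For injectivity I would construct a $GL_g(\mathbb{Z})$-equivariant section $s\colon\Lambda^3 B\to\overline{W}(ab^4)/\Gamma_3(W(ab^2))$ defined on generators by $s(b_i\wedge b_j\wedge b_k)=[\tree{b_i,b_j,b_l,a_l,b_k}]$ for any auxiliary index $l\notin\{i,j,k\}$. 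Well-definedness demands two checks: independence of the choice of $l$, which follows from the $a\leftrightarrow b$ analog of point ii) of Lemma~\ref{lema:trees-ab4-needed} combined with an IHX rewriting; and antisymmetry of the class under transpositions of $(i,j,k)$, which follows from the $a\leftrightarrow b$ analogs of points iv) and v) of the same lemma together with the AS relation applied at the two endpoints of the caterpillar tree. Once established, the image computation gives $\overline{Tr^A_\Lambda}\circ s=2\cdot\mathrm{id}$ on $\Lambda^3 B$, so torsion-freeness of $\Lambda^3 B$ forces $s$ to be injective; since its image contains a generating orbit of the target, $s$ is also surjective, hence an isomorphism. Consequently $\overline{Tr^A_\Lambda}$ is itself an isomorphism onto $2\Lambda^3 B$, completing exactness in the middle.

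The main obstacle is the well-definedness of $s$, specifically the verification that the class $[\tree{b_i,b_j,b_l,a_l,b_k}]$ in $\overline{W}(ab^4)/\Gamma_3(W(ab^2))$ is fully antisymmetric in the triple $(i,j,k)$: the AS and IHX relations do not yield this antisymmetry on the nose, and one must mobilize the identities developed in Lemma~\ref{lema:trees-ab4-needed} (after the $a\leftrightarrow b$ swap) to push the unwanted remainder trees into $\Gamma_3(W(ab^2))$.
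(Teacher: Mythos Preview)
Your proof is correct and, through the image computation, follows the paper's argument closely: both use the decomposition from Proposition~\ref{prop:desc-m(3)} together with the $a\leftrightarrow b$ analog of Lemma~\ref{lema:gen W_1ab4} to reduce to the single generator $\tree{b_1,b_2,b_4,a_4,b_3}$ modulo $\Gamma_3(W(ab^2))$, invoke Lemma~\ref{lema:Trace-map-A} for vanishing on $\Gamma_3(W(ab^2))$, and evaluate $Tr^A_\Lambda$ on that generator to obtain the image $2\Lambda^3 B$.

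The one genuine difference is in how you establish exactness in the middle. The paper argues directly: it writes a generic element of the generating module $K$ as $S=\sum\lambda_{ijkl}\tree{b_i,b_j,b_l,a_l,b_k}+\lambda_{jkil}\tree{b_j,b_k,b_l,a_l,b_i}$, imposes $Tr^A_\Lambda(S)=0$ to obtain the coefficient relation $\sum_{l}(\lambda_{ijkl}+\lambda_{jkil})=0$ for each triple $i<j<k$, and then uses the $a\leftrightarrow b$ analogs of parts~iii) and~v) of Lemma~\ref{lema:trees-ab4-needed} to rewrite $S$ explicitly as a sum of differences lying in $\Gamma_3(W(ab^2))$. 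Your route instead packages the same identities into the well-definedness of a section $s\colon\Lambda^3 B\to Q$ and then deduces injectivity of $\overline{Tr^A_\Lambda}$ from the fact that $s$ is an isomorphism. The verifications are the same (independence of the auxiliary index $l$ is exactly part~iii), and antisymmetry under transpositions of $(i,j,k)$ comes from part~v) combined with the AS relation), so the two arguments are dual reorganizations of a single computation. The paper's version is slightly more elementary in that it avoids checking $\mathfrak{S}_g$-equivariance of $s$ and the surjectivity step; your version is a bit more conceptual and makes the isomorphism $Q\cong\Lambda^3 B$ explicit.
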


\begin{proof}
By Lemma \ref{lema:gen W_1ab4} and Table \ref{table:1}, $\overline{W}_3(a^{\geq 1}b)=\overline{W}_3(a^{\geq 2}b)\oplus \Gamma_3(W(ab^2))+ K$, where $K$ is the subgroup generated by the $\mathfrak{S}_g$-orbits of the tree $\tree{b_1,b_2,b_4,a_4,b_3}$.

By definition of the trace map $Tr^A_\Lambda$, the submodule $\overline{W}_3(a^{\geq 2}b)$ is in its kernel.
We know that $\Gamma_3(W(ab^2))\subseteq \tau_3(\mathcal{TAB}_{g,1}(3))\subseteq \tau_3(\mathcal{TA}_{g,1}(3))$ and by Lemma \ref{lema:Trace-map-A} the trace map $Tr^A_\Lambda$ is zero on these elements.
Therefore it only remains to compute the kernel and the image of the trace map $Tr^A_\Lambda$ on $K$.
For the image, we have
\[
Tr^A_\Lambda\Bigg(\tree{b_k,a_l,b_l,b_j,b_i}\Bigg)=2\tree{b_i,b_j,b_k}.\]
Hence the image of $Tr^A_\Lambda$ is $2\Lambda^3 B$.
We now show that the kernel of the trace map $Tr^A_\Lambda$ on $K$ is contained in $\Gamma_3(W(ab^2))$.
By the AS relation, an element of $K$ can be written as:
\[
S=\sum_{\substack{i<j<k \\ l\neq i,j,k}}\lambda_{ijkl}\tree{b_i,b_j,b_l,a_l,b_k}+\lambda_{jkil}\tree{b_j,b_k,b_l,a_l,b_i}.
\]
Assume that this element belongs to the kernel of $Tr^A_\Lambda$. Then we compute:
\[
0=Tr^A_\Lambda(S)=\sum_{\substack{i<j<k \\ l\neq i,j,k}}2\lambda_{ijkl}\tree{b_i,b_j,b_k}+2\lambda_{jkil}\tree{b_j,b_k,b_i}=2\sum_{\substack{i<j<k \\ l\neq i,j,k}}(\lambda_{ijkl}+\lambda_{jkil})\tree{b_i,b_j,b_k}.
\]
Hence, for each fixed triple $i<j<k$,
$
\sum_{\substack{l\neq i,j,k}}(\lambda_{ijkl}+\lambda_{jkil})=0
$,
and we have that:
\[
\begin{aligned}
S & =S-\sum_{\substack{i<j<k \\ l\neq i,j,k}}(\lambda_{ijkl}+\lambda_{jkil})\tree{b_i,b_j,b_k,a_k,b_k} \\
& = \sum_{i<j<k}\Bigg(\sum_{l\neq i,j,k}
\lambda_{ijkl}\Bigg(\tree{b_i,b_j,b_l,a_l,b_k}
-\tree{b_i,b_j,b_k,a_k,b_k}\Bigg)
+\lambda_{jkil}\Bigg(\tree{b_j,b_k,b_l,a_l,b_i}-\tree{b_i,b_j,b_k,a_k,b_k}\Bigg)\Bigg).
\end{aligned}
\]
where the last difference of trees can be written as
\[
\tree{b_j,b_k,b_l,a_l,b_i}-\tree{b_i,b_j,b_k,a_k,b_k}=
\tree{b_j,b_k,b_l,a_l,b_i}-\tree{b_i,b_j,b_l,a_l,b_k}
+\tree{b_i,b_j,b_l,a_l,b_k}-\tree{b_i,b_j,b_k,a_k,b_k}.
\]
Finally, by points $iii)$, $v)$ in Lemma \ref{lema:trees-ab4-needed} and the AS relation, this sum of trees belongs indeed to $\Gamma_3(W(ab^2))$.
\end{proof}

\subsection{The image of the handlebody subgroups}

We now compute the image of the third Johnson homomorphism on the handlebody subgroups $\mathcal{A}_{g,1}(3)$, $\mathcal{B}_{g,1}(3)$ and $\mathcal{AB}_{g,1}(3)$.

\begin{proposition}
\label{prop:Handlebody A(3)}
For any integer $g\geq 6$ we have that,
\[
\begin{aligned}
\tau_3(\mathcal{A}_{g,1}(3)) & =  \tau_3(\mathcal{TA}_{g,1}(3))=Ker(Tr^A_\Lambda)\cap Im(\tau_3), \\
\tau_3(\mathcal{B}_{g,1}(3)) & =  \tau_3(\mathcal{TB}_{g,1}(3))=Ker(Tr^B_\Lambda)\cap Im(\tau_3), \\
\tau_3(\mathcal{AB}_{g,1}(3)) & =  \tau_3(\mathcal{TAB}_{g,1}(3))=Ker(Tr^A_\Lambda)\cap Ker(Tr^B_\Lambda)\cap Im(\tau_3).
\end{aligned}
\]
\end{proposition}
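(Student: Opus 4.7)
The plan is to prove each of the three equalities via a circular chain of inclusions. For the first line, I set up the sandwich
\[
\tau_3(\mathcal{TA}_{g,1}(3)) \subseteq \tau_3(\mathcal{A}_{g,1}(3)) \subseteq Ker(Tr^A_\Lambda)\cap Im(\tau_3) \subseteq \tau_3(\mathcal{TA}_{g,1}(3)).
\]
The first inclusion is immediate from $\mathcal{TA}_{g,1}(3) \subseteq \mathcal{A}_{g,1}(3)$, and the second is exactly Lemma \ref{lema:Trace-map-A} combined with $\tau_3(\mathcal{A}_{g,1}(3))\subseteq Im(\tau_3)$. The substantive inclusion is the third one.

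To handle it, I first observe that, since $\tau$ is a graded Lie algebra homomorphism by \cite[Thm.~4.8]{mor-ab} and since $\tau_1(\mathcal{TA}_{g,1}) = W(a^{\geq 1}b) = W(a^3)\oplus W(a^2b)\oplus W(ab^2)$ by \cite[Lem.~2.5]{mor}, one has the identification $\tau_3(\mathcal{TA}_{g,1}(3)) = \Gamma_3(W(a^{\geq 1}b))$. Lemma \ref{lema:ses-Trace-map-A} then identifies $Ker(Tr^A_\Lambda)\cap Im(\tau_3) = \overline{W}_3(a^{\geq 2}b)\oplus \Gamma_3(W(ab^2))$ (the intersection being computed inside the domain $W_3(a^{\geq 1}b)$ of $Tr^A_\Lambda$, which already contains $\tau_3(\mathcal{A}_{g,1}(3))$ by \cite[Sec.~4]{faes1}). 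The summand $\Gamma_3(W(ab^2))$ is contained in $\Gamma_3(W(a^{\geq 1}b))$ trivially. For $\overline{W}_3(a^{\geq 2}b) = \overline{W}(a^5)\oplus \overline{W}(a^4b)\oplus \overline{W}(a^3b^2)\oplus \overline{W}(a^2b^3)$, an inspection of Table \ref{table:1} shows that each of the four components is generated by images of the triple Lie bracket whose three input trees all lie in $W(a^3)\oplus W(a^2b)\oplus W(ab^2) = W(a^{\geq 1}b)$, so each sits inside $\Gamma_3(W(a^{\geq 1}b))$ and the sandwich closes.

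The second line is obtained from the first by the $a\leftrightarrow b$ symmetry, so no genuinely new work is required. For the third line, I would use the sandwich
\[
\tau_3(\mathcal{TAB}_{g,1}(3)) \subseteq \tau_3(\mathcal{AB}_{g,1}(3)) \subseteq \tau_3(\mathcal{A}_{g,1}(3))\cap \tau_3(\mathcal{B}_{g,1}(3)) \subseteq \tau_3(\mathcal{TAB}_{g,1}(3)),
\]
whose first two inclusions are clear. By the already proven first and second lines, the middle term equals $Ker(Tr^A_\Lambda)\cap Ker(Tr^B_\Lambda)\cap Im(\tau_3)$. Using both short exact sequences together with the coloring-by-leaves grading on $Im(\tau_3)$, a direct computation gives
\[
Ker(Tr^A_\Lambda)\cap Ker(Tr^B_\Lambda)\cap Im(\tau_3) = \Gamma_3(W(ab^2))\oplus \overline{W}(a^2b^3)\oplus \overline{W}(a^3b^2)\oplus \Gamma_3(W(a^2b)),
\]
where I use that $\Gamma_3(W(ab^2))$ sits in the degree-$1$ piece $\overline{W}(ab^4)$ and $\Gamma_3(W(a^2b))$ in the degree-$4$ piece $\overline{W}(a^4b)$, so that the degree-$0$ and degree-$5$ pieces $\overline{W}(b^5)$ and $\overline{W}(a^5)$ drop out after intersecting the two kernels. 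Since $\tau_1(\mathcal{TAB}_{g,1}) = W(a^{\geq 1}b^{\geq 1}) = W(a^2b)\oplus W(ab^2)$ by \cite[Prop.~6.2]{faes1}, we have $\tau_3(\mathcal{TAB}_{g,1}(3)) = \Gamma_3(W(a^{\geq 1}b^{\geq 1}))$, and Table \ref{table:1} once more exhibits each of the four components above as coming from a triple Lie bracket of elements of $W(a^2b)\oplus W(ab^2)$.

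The real technical weight of the argument lies in the preceding lemmas --- notably Lemma \ref{lema:Trace-map-A}, which ties the antisymmetric Lagrangian trace to the finite type invariant $F = \lambda_2 - 18\lambda^2 + 3\lambda$ and forces $\tau_3(\mathcal{A}_{g,1}(3))$ into its kernel, and Lemma \ref{lema:ses-Trace-map-A}, which pins down the kernel of the trace on $\overline{W}_3(a^{\geq 1}b)$ exactly --- together with the generator reduction of Section 3 summarised in Table \ref{table:1}. The proposition itself is then an assembly; the only point that requires any care is correctly matching the coloring grading when intersecting the two trace kernels, which is made transparent by the observation on the grading supports of $\Gamma_3(W(ab^2))$ and $\Gamma_3(W(a^2b))$ noted above.
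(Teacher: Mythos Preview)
Your proposal is correct and follows essentially the same approach as the paper: the same circular chain of inclusions, the same appeal to Lemma~\ref{lema:Trace-map-A} for $\tau_3(\mathcal{A}_{g,1}(3))\subseteq Ker(Tr^A_\Lambda)$, the same use of Lemma~\ref{lema:ses-Trace-map-A} to identify the kernel, and the same verification via Table~\ref{table:1} that the resulting pieces lie in the appropriate $\Gamma_3$. Your explicit remark about where $\Gamma_3(W(ab^2))$ and $\Gamma_3(W(a^2b))$ sit in the coloring grading when computing the double intersection is a helpful clarification that the paper leaves implicit.
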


\begin{proof}

We first show that:
\begin{equation}
\label{eq:inclusions}
\begin{array}{c}
\tau_3(\mathcal{TA}_{g,1}(3))  \supseteq Ker(Tr^A_\Lambda)\cap Im(\tau_3),\qquad
\tau_3(\mathcal{TB}_{g,1}(3))  \supseteq Ker(Tr^B_\Lambda)\cap Im(\tau_3),\\[1ex]
\tau_3(\mathcal{TAB}_{g,1}(3)) \supseteq Ker(Tr^A_\Lambda)\cap Ker(Tr^B_\Lambda)\cap Im(\tau_3).
\end{array}
\end{equation}

By Lemma \ref{lema:ses-Trace-map-A} we have the following equalities:
\[
\begin{aligned}
Ker(Tr^A_\Lambda)\cap Im(\tau_3)& = \overline{W}_3(a^{\geq 2}b) \oplus \Gamma_3(W(ab^2)), \\
Ker(Tr^B_\Lambda)\cap Im(\tau_3)& =\Gamma_3(W(a^2b))\oplus \overline{W}_3(ab^{\geq 2}), \\
Ker(Tr^A_\Lambda)\cap Ker(Tr^B_\Lambda)\cap Im(\tau_3)& =\Gamma_3(W(a^2b))\oplus \overline{W}_3(a^{\geq 2}b^{\geq 2})\oplus \Gamma_3(W(ab^2)).
\end{aligned}
\]

Since the Johnson homomorphism is a Lie algebra homomorphism, we have that
\[
\begin{aligned}
\tau_3(\mathcal{TA}_{g,1}(3))& =[[\tau_1(\mathcal{TA}_{g,1}),\tau_1(\mathcal{TA}_{g,1})],\tau_1(\mathcal{TA}_{g,1})], \\
\tau_3(\mathcal{TB}_{g,1}(3))& =[[\tau_1(\mathcal{TB}_{g,1}),\tau_1(\mathcal{TB}_{g,1})],\tau_1(\mathcal{TB}_{g,1})], \\
\tau_3(\mathcal{TAB}_{g,1}(3))& =[[\tau_1(\mathcal{TAB}_{g,1}),\tau_1(\mathcal{TAB}_{g,1})],\tau_1(\mathcal{TAB}_{g,1})].
\end{aligned}
\]
Moreover, by \cite[Lem. 2.5]{mor} and \cite[Cor. 6.5]{faes1}, we know that
\[
\tau_1(\mathcal{TA}_{g,1})  =W(a^{\geq 1}b), \qquad \tau_1(\mathcal{TB}_{g,1})  =W(ab^{\geq 1}) \quad \text{and} \quad
\tau_1(\mathcal{TAB}_{g,1}) =W(a^{\geq 1}b^{\geq 1}). 
\]
Then by Table \ref{table:1} we get the inclusions \eqref{eq:inclusions}, and by Lemma \ref{lema:Trace-map-A} we have that:
\[
\begin{aligned}
& \tau_3(\mathcal{A}_{g,1}(3))\subseteq Im(\tau_3)\cap Ker(Tr^A_\Lambda) \subseteq \tau_3(\mathcal{TA}_{g,1}(3)) \subseteq \tau_3(\mathcal{A}_{g,1}(3)), \\
& \tau_3(\mathcal{B}_{g,1}(3))\subseteq Im(\tau_3)\cap Ker(Tr^B_\Lambda) \subseteq \tau_3(\mathcal{TB}_{g,1}(3)) \subseteq \tau_3(\mathcal{B}_{g,1}(3)), \\
& \tau_3(\mathcal{AB}_{g,1}(3))\subseteq Im(\tau_3)\cap Ker(Tr^A_\Lambda)\cap Ker(Tr^B_\Lambda) \subseteq \tau_3(\mathcal{TAB}_{g,1}(3)) \subseteq \tau_3(\mathcal{AB}_{g,1}(3)).
\end{aligned}
\]
Therefore, all these inclusions are indeed equalities.
\end{proof}

As a first consequence of Proposition \ref{prop:Handlebody A(3)} we get a  handlebody version of \cite[Cor. 4.6]{FMS}.

\begin{corollary}
\label{cor:Handlebody A(3)}
For any integer $g\geq 6$ we have that,
\[
\begin{array}{c}
\mathcal{A}_{g,1}(3)=\mathcal{TA}_{g,1}(3)\cdot\mathcal{A}_{g,1}(4),\qquad
\mathcal{B}_{g,1}(3)=\mathcal{TB}_{g,1}(3)\cdot\mathcal{B}_{g,1}(4), \\[1ex]
\mathcal{AB}_{g,1}(3)=\mathcal{TAB}_{g,1}(3)\cdot\mathcal{AB}_{g,1}(4).
\end{array}
\]
\end{corollary}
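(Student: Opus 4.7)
The plan is to deduce Corollary \ref{cor:Handlebody A(3)} directly from Proposition \ref{prop:Handlebody A(3)}, in the same spirit as the classical deduction of $\mathcal{M}_{g,1}(3)=\mathcal{T}_{g,1}(3)\cdot\mathcal{M}_{g,1}(4)$ from the equality $\tau_3(\mathcal{M}_{g,1}(3))=\tau_3(\mathcal{T}_{g,1}(3))$ given in \cite[Cor.~4.6]{FMS}, but now carried out inside each handlebody subgroup. The three inclusions are proved in parallel; I describe the argument for $\mathcal{A}_{g,1}(3)$ and indicate that the two remaining cases are verbatim.

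First I would check the ``easy'' inclusion $\mathcal{TA}_{g,1}(3)\cdot\mathcal{A}_{g,1}(4)\subseteq\mathcal{A}_{g,1}(3)$. Since $\tau$ is a homomorphism of graded Lie algebras, the lower central series of $\mathcal{TA}_{g,1}$ lies in the Johnson filtration; in particular $\mathcal{TA}_{g,1}(3)\subseteq\mathcal{T}_{g,1}(3)\cap\mathcal{A}_{g,1}\subseteq\mathcal{M}_{g,1}(3)\cap\mathcal{A}_{g,1}=\mathcal{A}_{g,1}(3)$. Combined with $\mathcal{A}_{g,1}(4)\subseteq\mathcal{A}_{g,1}(3)$, the product lies in $\mathcal{A}_{g,1}(3)$.

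For the non-trivial inclusion, given $\varphi\in\mathcal{A}_{g,1}(3)$, apply the third Johnson homomorphism to obtain $\tau_3(\varphi)\in\tau_3(\mathcal{A}_{g,1}(3))$. By Proposition \ref{prop:Handlebody A(3)}, this image coincides with $\tau_3(\mathcal{TA}_{g,1}(3))$, so there exists $\psi\in\mathcal{TA}_{g,1}(3)$ with $\tau_3(\psi)=\tau_3(\varphi)$. The quotient $\psi^{-1}\varphi$ then belongs to $\mathcal{A}_{g,1}(3)$ (as $\psi\in\mathcal{TA}_{g,1}(3)\subseteq\mathcal{A}_{g,1}(3)$) and to $\ker(\tau_3)=\mathcal{M}_{g,1}(4)$. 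Hence
\[
\psi^{-1}\varphi\in\mathcal{A}_{g,1}(3)\cap\mathcal{M}_{g,1}(4)\subseteq\mathcal{A}_{g,1}\cap\mathcal{M}_{g,1}(4)=\mathcal{A}_{g,1}(4),
\]
so $\varphi=\psi\cdot(\psi^{-1}\varphi)\in\mathcal{TA}_{g,1}(3)\cdot\mathcal{A}_{g,1}(4)$, proving the first identity.

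The remaining two identities are obtained by repeating the argument word-for-word, replacing $\mathcal{A}_{g,1}$, $\mathcal{TA}_{g,1}$ by $\mathcal{B}_{g,1}$, $\mathcal{TB}_{g,1}$ and then by $\mathcal{AB}_{g,1}$, $\mathcal{TAB}_{g,1}$, and invoking the corresponding equalities from Proposition \ref{prop:Handlebody A(3)}. There is no genuine obstacle: the only place where something could fail is in verifying that $\psi^{-1}\varphi$ still lies in the correct handlebody subgroup, and this is automatic because both $\varphi$ and $\psi$ were chosen in that subgroup, which is closed under multiplication and inversion. The whole content of the corollary is therefore packaged in the equality of images established in Proposition \ref{prop:Handlebody A(3)}.
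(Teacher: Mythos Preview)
Your proposal is correct and matches the paper's intended approach: the paper states the corollary without explicit proof, presenting it simply as a direct consequence of Proposition~\ref{prop:Handlebody A(3)}, and your argument is precisely the standard kernel--image deduction that makes this implication explicit.
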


As a second consequence of Proposition \ref{prop:Handlebody A(3)} we compute the intersection of the images of $\mathcal{A}_{g,1}(3)$ and $\mathcal{B}_{g,1}(3)$ by the third Johnson homomorphism.

\begin{corollary}
\label{cor:A(3)-cap-B(3)}
For any integer $g\geq 6$ we have that,
\[\tau_3(\mathcal{AB}_{g,1}(3))=\tau_3(\mathcal{A}_{g,1}(3))\cap \tau_3(\mathcal{B}_{g,1}(3)).\]
\end{corollary}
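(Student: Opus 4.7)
The plan is to prove this corollary as an immediate consequence of Proposition~\ref{prop:Handlebody A(3)}, which furnishes kernel-theoretic descriptions of all three modules involved. The inclusion $\tau_3(\mathcal{AB}_{g,1}(3)) \subseteq \tau_3(\mathcal{A}_{g,1}(3)) \cap \tau_3(\mathcal{B}_{g,1}(3))$ is automatic from the set-theoretic inclusions $\mathcal{AB}_{g,1}(3) \subseteq \mathcal{A}_{g,1}(3)$ and $\mathcal{AB}_{g,1}(3) \subseteq \mathcal{B}_{g,1}(3)$, so the content is in the reverse inclusion.

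For the reverse inclusion, I would apply Proposition~\ref{prop:Handlebody A(3)} to rewrite
\[
\tau_3(\mathcal{A}_{g,1}(3)) \cap \tau_3(\mathcal{B}_{g,1}(3))
= \bigl(Ker(Tr^A_\Lambda)\cap Im(\tau_3)\bigr) \cap \bigl(Ker(Tr^B_\Lambda)\cap Im(\tau_3)\bigr),
\]
which is simply $Ker(Tr^A_\Lambda)\cap Ker(Tr^B_\Lambda)\cap Im(\tau_3)$. By the third equality of Proposition~\ref{prop:Handlebody A(3)}, this intersection coincides with $\tau_3(\mathcal{AB}_{g,1}(3))$, and the proof is complete.

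There is essentially no obstacle to overcome: all the genuine work has already been carried out in Proposition~\ref{prop:Handlebody A(3)}, where the characterization of each handlebody image as the intersection of $Im(\tau_3)$ with the kernels of the (antisymmetric) Lagrangian trace maps is established. The corollary is a purely formal consequence of the fact that intersecting two such characterizations yields the joint kernel condition that characterizes $\tau_3(\mathcal{AB}_{g,1}(3))$.
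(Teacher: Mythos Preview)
Your proof is correct and follows precisely the approach the paper intends: the corollary is stated as a direct consequence of Proposition~\ref{prop:Handlebody A(3)}, and you have spelled out exactly the intended argument, namely that intersecting the kernel descriptions of $\tau_3(\mathcal{A}_{g,1}(3))$ and $\tau_3(\mathcal{B}_{g,1}(3))$ yields the kernel description of $\tau_3(\mathcal{AB}_{g,1}(3))$.
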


\subsection{Equivalence relation for the 4th Johnson subgroup}

Once we have computed the images of the handlebody subgroups, we finally prove the core Theorem of this work.

\begin{theorem}
The Heegaard map induces a bijection:
\[
\lim_{g\to \infty} (\mathcal{A}_{g,1}(4)\backslash \mathcal{M}_{g,1}(4) / \mathcal{B}_{g,1}(4))_{\mathcal{AB}_{g,1}} \simeq \mathcal{S}_\mathbb{Z}^3,
\]
More precisely, two maps $\phi,\psi \in \mathcal{M}_{g,1}(4)$ are equivalent if and only if there exist maps $\xi_a \in \mathcal{A}_{g,1}(4)$, $\xi_b \in \mathcal{B}_{g,1}(4)$ and $\mu \in \mathcal{AB}_{g,1}$ such that
$\phi = \mu \xi_a \psi \xi_b \mu^{-1}$.
\end{theorem}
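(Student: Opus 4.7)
The plan is to leverage the already established $k=3$ bijection of Faes \cite[Prop 6.7]{faes1}, and to upgrade it to $k=4$ using Corollary \ref{cor:A(3)-cap-B(3)} as the crucial new ingredient. Well-definedness and surjectivity are essentially inherited from the weaker bijection: the inclusions $\mathcal{A}_{g,1}(4)\subset\mathcal{A}_{g,1}$ and $\mathcal{B}_{g,1}(4)\subset\mathcal{B}_{g,1}$ imply that replacing $\phi$ by $\mu\xi_a\phi\xi_b\mu^{-1}$ with $\xi_a\in\mathcal{A}_{g,1}(4)$, $\xi_b\in\mathcal{B}_{g,1}(4)$ and $\mu\in\mathcal{AB}_{g,1}$ yields a diffeomorphic Heegaard splitting (by Waldhausen--Singer, cf. the discussion in the introduction), and surjectivity onto $\mathcal{S}_\mathbb{Z}^3$ is exactly \cite[Thm. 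B]{faes2} of Faes. The only substantive task is therefore to prove injectivity.

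For injectivity, I would take $\phi,\psi\in\mathcal{M}_{g,1}(4)$ yielding the same integral homology sphere. Applying Faes's bijection at level $k=3$, after stabilising we obtain $\mu\in\mathcal{AB}_{g,1}$, $\xi_a\in\mathcal{A}_{g,1}(3)$ and $\xi_b\in\mathcal{B}_{g,1}(3)$ with $\phi=\mu\xi_a\psi\xi_b\mu^{-1}$. Conjugating by $\mu^{-1}$ and applying the third Johnson homomorphism $\tau_3$ (which is a group homomorphism on $\mathcal{M}_{g,1}(3)$ with kernel $\mathcal{M}_{g,1}(4)$) gives
\[
0=\tau_3(\mu^{-1}\phi\mu)=\tau_3(\xi_a)+\tau_3(\psi)+\tau_3(\xi_b)=\tau_3(\xi_a)+\tau_3(\xi_b),
\]
so $\tau_3(\xi_a)=-\tau_3(\xi_b)$ lies in $\tau_3(\mathcal{A}_{g,1}(3))\cap\tau_3(\mathcal{B}_{g,1}(3))$. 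By Corollary \ref{cor:A(3)-cap-B(3)} this intersection equals $\tau_3(\mathcal{AB}_{g,1}(3))$, so there exists $\eta\in\mathcal{AB}_{g,1}(3)$ with $\tau_3(\eta)=\tau_3(\xi_a)$. The elements $\alpha:=\xi_a\eta^{-1}$ and $\beta:=\eta\xi_b$ then belong respectively to $\mathcal{A}_{g,1}(3)\cap\ker\tau_3=\mathcal{A}_{g,1}(4)$ and $\mathcal{B}_{g,1}(3)\cap\ker\tau_3=\mathcal{B}_{g,1}(4)$.

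Finally, one substitutes $\xi_a=\alpha\eta$ and $\xi_b=\eta^{-1}\beta$ into the equation and shuffles $\eta$ across $\psi$: a direct computation gives
\[
\phi=\mu\,\alpha\,(\eta\psi\eta^{-1})\,\beta\,\mu^{-1}=(\mu\eta)(\eta^{-1}\alpha\eta)\,\psi\,(\eta^{-1}\beta\eta)(\mu\eta)^{-1}.
\]
Since $\eta\in\mathcal{AB}_{g,1}(3)\subset\mathcal{A}_{g,1}\cap\mathcal{B}_{g,1}$ and since $\mathcal{A}_{g,1}(4)=\mathcal{A}_{g,1}\cap\mathcal{M}_{g,1}(4)$ (respectively $\mathcal{B}_{g,1}(4)$) is normal in $\mathcal{A}_{g,1}$ (respectively $\mathcal{B}_{g,1}$), the conjugates $\eta^{-1}\alpha\eta$ and $\eta^{-1}\beta\eta$ still lie in $\mathcal{A}_{g,1}(4)$ and $\mathcal{B}_{g,1}(4)$ respectively, while $\mu\eta\in\mathcal{AB}_{g,1}$. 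This is exactly the desired presentation of $\phi$, proving that $\phi$ and $\psi$ are equivalent in the refined sense.

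The main obstacle is the step invoking Corollary \ref{cor:A(3)-cap-B(3)}; indeed, the entire technical machinery developed in Sections 3 and 4 (Proposition \ref{prop:intro-3}, the antisymmetric Lagrangian traces, and the detailed computation of $\tau_3$ on the handlebody subgroups) exists precisely so that this crucial intersection identity can be established. Once that identity is available, the above argument is a short algebraic manipulation that parallels Pitsch's and Faes's treatments at the lower levels $k=1,2,3$.
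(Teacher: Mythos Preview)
Your proof is correct and follows essentially the same approach as the paper: invoke Faes's level-$3$ bijection, apply $\tau_3$ to force $\tau_3(\xi_a)=-\tau_3(\xi_b)$ into the intersection, use Corollary~\ref{cor:A(3)-cap-B(3)} to extract an element of $\mathcal{AB}_{g,1}(3)$, and absorb it into the factorisation. The only cosmetic difference is in the final regrouping: the paper inserts $\nu\nu^{-1}$ on both sides of $\xi_a\psi\xi_b$ and groups as $(\mu\nu)(\nu^{-1}\xi_a)\psi(\xi_b\nu)(\mu\nu)^{-1}$, landing directly in $\mathcal{A}_{g,1}(4)$ and $\mathcal{B}_{g,1}(4)$ without the extra conjugation and normality argument you carry out.
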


\begin{proof}
By \cite[Thm. B]{faes2} we already know that this map is surjective and well-defined. We prove that it is injective. Consider $\phi, \psi\in \mathcal{M}_{g,1}(4)$ with image the same homology sphere. By \cite[Prop 6.7]{faes1} we know that possibly after stabilization, there exist elements $\xi_a\in \mathcal{A}_{g,1}(3)$, $\xi_b\in \mathcal{B}_{g,1}(3)$ and $\mu\in \mathcal{AB}_{g,1}$ such that 
$\mu\xi_a \varphi \xi_b \mu^{-1}=\psi$. Then applying $\tau_3$ to this equality we get that $\tau_3(\xi_a)+\tau_3(\xi_b)=0$ and hence $\tau_3(\xi_a)=-\tau_3(\xi_b)$
Therefore $\tau_3(\xi_a)\in \tau_3(\mathcal{A}_{g,1}(3))\cap \tau_3(\mathcal{B}_{g,1}(3))$ and by Corollary \ref{cor:A(3)-cap-B(3)} there exist an element $\nu\in \mathcal{AB}_{g,1}(3)$ such that
$\tau_3(\nu)=\tau_3(\xi_a)=-\tau_3(\xi_b)$. As a consequence,
\[\psi= \mu\xi_a \varphi \xi_b \mu^{-1}=\mu\nu\nu^{-1}\xi_a \varphi \xi_b \nu\nu^{-1} \mu^{-1}=(\mu\nu)(\nu^{-1}\xi_a) \varphi (\xi_b \nu)(\mu\nu)^{-1},\]
where $\mu\nu \in \mathcal{AB}_{g,1}$, $\nu^{-1}\xi_a\in \mathcal{A}_{g,1}(4)$ and $\xi_b \nu \in \mathcal{B}_{g,1}(4)$ because $\tau_3(\nu^{-1}\xi_a)=0$,
$\tau_3(\xi_b \nu)=0$.

\end{proof}

\pagebreak

\bibliography{biblio}{}

\begin{thebibliography}{10}

\bibitem{faes3}
Q.~Faes.
\newblock Lagrangian traces for the johnson filtration of the handlebody group,
  2022.

\bibitem{faes2}
Q.~Faes.
\newblock Triviality of the {$J_4$}-equivalence among homology 3-spheres.
\newblock {\em Trans. Amer. Math. Soc.}, 375(9):6597--6620, 2022.

\bibitem{faes1}
Q.~Faes.
\newblock The handlebody group and the images of the second {J}ohnson
  homomorphism.
\newblock {\em Algebr. Geom. Topol.}, 23(1):243--293, 2023.

\bibitem{FMS}
Q.~Faes, G.~Massuyeau, and M.~Sato.
\newblock On the degree-two part of the associated graded of the lower central
  series of the torelli group.
\newblock {\em arXiv preprint arXiv:2407.07981}, 2024.

\bibitem{levine3}
S.~Garoufalidis and J.~Levine.
\newblock Finite type {$3$}-manifold invariants, the mapping class group and
  blinks.
\newblock {\em J. Differential Geom.}, 47(2):257--320, 1997.

\bibitem{jon_1}
D.~Johnson.
\newblock An abelian quotient of the mapping class group $\mathcal{I}_{g}$.
\newblock {\em Math. Ann.}, 249(3):225--242, 1980.

\bibitem{jon_2}
D.~Johnson.
\newblock The structure of the {T}orelli group. {II}. {A} characterization of
  the group generated by twists on bounding curves.
\newblock {\em Topology}, 24(2):113--126, 1985.

\bibitem{mor}
S.~Morita.
\newblock Casson's invariant for homology {$3$}-spheres and characteristic
  classes of surface bundles. {I}.
\newblock {\em Topology}, 28(3):305--323, 1989.

\bibitem{mor-ab}
S.~Morita.
\newblock Abelian quotients of subgroups of the mapping class group of
  surfaces.
\newblock {\em Duke Math. J.}, 70(3):699--726, 1993.

\bibitem{ohtsuki1}
T.~Ohtsuki.
\newblock Finite type invariants of integral homology {$3$}-spheres.
\newblock {\em J. Knot Theory Ramifications}, 5(1):101--115, 1996.

\bibitem{pitsch3}
W.~Pitsch.
\newblock Integral homology 3-spheres and the {J}ohnson filtration.
\newblock {\em Trans. Amer. Math. Soc.}, 360(6):2825--2847, 2008.

\bibitem{pitsch}
W.~Pitsch.
\newblock Trivial cocycles and invariants of homology 3-spheres.
\newblock {\em Adv. Math.}, 220(1):278--302, 2009.

\bibitem{PR}
W.~Pitsch and R.~Riba.
\newblock Finite type invariants in low degrees and the johnson filtration.
\newblock {\em arXiv preprint arXiv:2311.09924}, 2023.

\bibitem{singer}
J.~Singer.
\newblock Three-dimensional manifolds and their {H}eegaard diagrams.
\newblock {\em Trans. Amer. Math. Soc.}, 35(1):88--111, 1933.

\bibitem{wald}
F.~Waldhausen.
\newblock On mappings of handlebodies and of {H}eegaard splittings.
\newblock In {\em Topology of {M}anifolds ({P}roc. {I}nst., {U}niv. of
  {G}eorgia, {A}thens, {G}a., 1969)}, pages 205--211. Markham, Chicago, Ill.,
  1970.

\end{thebibliography}
\bibliographystyle{abbrv}

\end{document}